\theoremstyle{plain}
\newtheorem{thm}{Theorem}[section]
\newtheorem{lem}[thm]{Lemma}
\newtheorem{conjecture}[thm]{Conjecture}
\newtheorem{cor}[thm]{Corollary}
\newtheorem{observation}{Observation}
\newtheorem{obs}[observation]{Observation}
\newtheorem{problem}{Problem}
\newtheorem{clm}{Claim}
\newtheorem*{VAL}{Vizing's Adjacency Lemma (VAL)}
\theoremstyle{definition}
\newtheorem{defn}{Definition}
\theoremstyle{remark}
\newcommand{\fancy}[1]{\mathcal{#1}}
\newcommand{\IN}{\mathbb{N}}
\newcommand{\set}[1]{\left\{ #1 \right\}}
\newcommand{\setb}[3]{\left\{ #1 \in #2 \mid #3 \right\}}
\newcommand{\setbs}[2]{\left\{ #1 \mid #2 \right\}}
\newcommand{\card}[1]{\left|#1\right|}
\newcommand{\size}[1]{\left\Vert#1\right\Vert}
\newcommand{\floor}[1]{\left\lfloor#1\right\rfloor}
\newcommand{\func}[3]{#1\colon #2 \rightarrow #3}
\newcommand{\irange}[1]{\left[#1\right]}
\newcommand{\parens}[1]{\left( #1 \right)}
\newcommand{\brackets}[1]{\left[ #1 \right]}
\newcommand{\DefinedAs}{\mathrel{\mathop:}=}
\newcommand{\pot}{\operatorname{pot}}
\def\adj{\leftrightarrow}
\def\H{\fancy{H}}
\newcommand\numberthis{\addtocounter{equation}{1}\tag{\theequation}}
\def\adj{\leftrightarrow}
\def\ch{\textrm{ch}}
\begin{document}
\title{Edge-coloring via fixable subgraphs}
\author{Daniel W. Cranston\thanks{Department of Mathematics and Applied
Mathematics, Viriginia Commonwealth University, Richmond, VA;
\texttt{dcranston@vcu.edu}; 
Research of the first author is partially supported by NSA Grant
98230-15-1-0013.}
\and
Landon Rabern\thanks{LBD Data Solutions, Lancaster, PA;
\texttt{landon.rabern@gmail.com}}
}
\maketitle

\begin{abstract}
Many graph coloring proofs proceed by showing that a minimal counterexample to
the theorem being proved cannot contain certain configurations, and then
showing that each graph under consideration contains at least one such
configuration; these configurations are called \emph{reducible} for that theorem.
(A \emph{configuration} is a subgraph $H$, along with specified degrees
$d_G(v)$ in the original graph $G$ for each vertex of $H$.)

We give a general framework for showing that configurations are reducible for
edge-coloring.  A particular form of reducibility, called \emph{fixability},
can be considered without reference to a containing graph.  This has two key
benefits: (i) we can now formulate necessary conditions for fixability, and
(ii) the problem of fixability is easy for a computer to solve. The necessary
condition of \emph{superabundance} is sufficient for multistars and we
conjecture that it is sufficient for trees as well (this would generalize the
powerful technique of Tashkinov trees). 

Via computer, we can generate thousands of reducible
configurations, but we have short proofs for only a small fraction of these. 
The computer can write \LaTeX\ code for its proofs, but they are only
marginally enlightening and can run thousands of pages long.  We give examples
of how to use some of these reducible configurations to prove conjectures on
edge-coloring for small maximum degree.  Our aims in writing this paper
are 
(i) to provide a common context for a variety of reducible configurations for
edge-coloring and 
(ii) to spur development of methods for humans to understand what the computer
already knows.
\end{abstract}

\section{Introduction}
Suppose we want to $k$-color a graph $G$. If we already have a $k$-coloring of
an induced subgraph $H$ of $G$, we might try to extend this coloring to all of
$G$.  We can view this task as coloring $G-H$ from lists (this is called
\emph{list-coloring}), where each vertex $v$ in $G-H$ gets a list of colors
formed from $\set{1, \ldots, k}$ by removing all colors used on its
neighborhood in $H$.  
Often we cannot complete just any $k$-coloring of $H$ to all of
$G$.  Instead, we may need to modify the $k$-coloring of $H$ to get a coloring
we can extend.  Given rules for how we may modify the $k$-coloring
of $H$, we can view our original problem
as the problem of list coloring $G-H$, where each
vertex gets a list as before, but now we can modify these lists in certain ways.

As an example of this approach, the second author proved \cite{HallGame} 
a common generalization of Hall's marriage theorem and Vizing's theorem on
edge-coloring.  The present paper generalizes a special case of this result and
puts it into a broader context.  
Since we often want to prove coloring results for all graphs having
certain properties, and not just some fixed graph, we only have partial control
over the outcome of a recoloring of $H$. For example, if we swap colors red and
green in a component $C$ of the red-green subgraph (that is, we perform a Kempe
change), we may succeed in making some desired vertex red,
but if $C$ is somewhat arbitrary, we cannot precisely control what happens
to the colors of its other vertices.  
We model this lack of control as a two-player game---we move by 
recoloring a vertex as we desire and then the other player gets a turn to muck things up. 
In the original context, where we want to color $G$, our opponent is the graph
$G$; more precisely, the embedding of $G-H$ in $G$ is one way to describe a
strategy for the second player. The general paradigm that we described above is
for vertex coloring.  In the rest of the paper, we consider only the special
case that is edge-coloring (or, equivalently, vertex coloring line graphs).  

All of our multigraphs are loopless.  Let $G$ be a multigraph, $L$ a list
assignment on $V(G)$, and $\pot(L) = \bigcup_{v\in V(G)} L(v)$. An
\emph{$L$-pot} is a set $X$ containing $\pot(L)$.  
We typically let $P$ denote an arbitrary $L$-pot.  
An \emph{$L$-edge-coloring}
is an edge-coloring $\pi$ of $G$ such that $\pi(xy) \in L(x) \cap L(y)$ for all
$xy \in E(G)$;  
furthermore, we require $\pi(xy)\ne \pi(xz)$ for each vertex $x$ and distinct
neighbors $y$ and $z$ of $x$.
For the maximum degree in $G$ we write $\Delta(G)$, or simply
$\Delta$, when $G$ is clear from context.
For the edge-chromatic number of $G$ we write $\chi'(G)$.
We often denote the set $\{1,\ldots,k\}$ by $[k]$.

\section{Completing edge-colorings}
Our goal is to convert a partial $k$-edge-coloring of a multigraph $M$ into a
$k$-edge-coloring of (all of) $M$.  For a partial $k$-edge-coloring $\pi$ of
$M$, let $M_\pi$ be the subgraph of $M$ induced by the uncolored edges and let
$L_\pi$ be the list assignment on the vertices of $M_\pi$ given by 
$L_\pi(v) = \irange{k} - \setbs{\tau}{\pi(vx) = \tau \text{ for some edge  } vx \in E(M)}$. 

Kempe chains give a powerful technique for converting a partial
$k$-edge-coloring into a $k$-edge-coloring of the whole graph.  The idea is to
repeatedly exchange colors on two-colored paths until the uncolored subgraph
$M_\pi$ has an edge-coloring $\zeta$ from its lists, that is, such that
$\zeta(xy) \in L_\zeta(x) \cap
L_\zeta(y)$ for all $xy \in E(M_\pi)$.  (One advantage of considerng the special
case that is edge-coloring is that every Kempe chain is either a path or an even
cycle.) In this sense the original list
assignment $L_\pi$ on $M_\pi$ is \emph{fixable}. In the next section, we give
an abstract definition of this notion that frees us from the embedding in the
containing graph $M$.  As we will see, computers enjoy this new freedom.


\subsection{Fixable graphs}
Thinking in terms of a two-player game is a good aid to intuition and we
encourage the reader to continue doing so. However, a simple recursive
definition is equivalent and has far less baggage. For distinct colors $a,b
\in P$, let $S_{L,a,b}$ be all the vertices of $G$ that have exactly one of $a$
or $b$ in their list; more precisely, $S_{L,a,b} =
\setb{v}{V(G)}{\,\card{\set{a,b} \cap L(v)} = 1}$.  

\begin{defn}
$G$ is \emph{$(L, P)$-fixable} if either
\begin{enumerate}
\item[(1)] $G$ has an $L$-edge-coloring; or
\item[(2)] there are different colors $a,b \in P$ such that for every partition
$X_1, \ldots, X_t$ of $S_{L,a,b}$ into sets of size at most two, there exists $J
\subseteq \irange{t}$ so that $G$ is $(L', P)$-fixable, where $L'$ is formed
from $L$ by swapping $a$ and $b$ in $L(v)$ for every $v \in \bigcup_{i \in J} X_i$.
\end{enumerate}
\end{defn}

The meaning of (1) is clear.  Intuitively, (2) says the following.  There is
some pair of colors, $a$ and $b$, such that regardless of how the vertices of
$S_{L,a,b}$ are paired via Kempe chains for colors $a$ and $b$ (or not paired
with any vertex of $S_{L,a,b}$), we can swap the colors on some subset $J$ of
the Kempe chains so that the resulting partial edge-coloring is fixable.

We write $L$-fixable as shorthand for $(L, \pot(L))$-fixable. When $G$ is $(L,
P)$-fixable, the choices of $a,b$, and $J$ in each application of (2) determine
a tree where all leaves have lists satisfying (1).  The \emph{height} of $(L,
P)$ is the minimum possible height of such a tree.  We write $h_G(L, P)$ for
this height and let $h_G(L, P) = \infty$ when $G$ is not $(L,P)$-fixable. 

\begin{lem}\label{FixableCompletesColoring}
If a multigraph $M$ has a partial $k$-edge-coloring $\pi$ such that $M_\pi$ is $(L_\pi, \irange{k})$-fixable, then $M$ is $k$-edge-colorable.
\end{lem}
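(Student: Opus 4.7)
The plan is to induct on $h := h_{M_\pi}(L_\pi, \irange{k})$.

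\emph{Base case.} If $h = 0$, clause~(1) of the fixability definition supplies an $L_\pi$-edge-coloring $\zeta$ of $M_\pi$; extend $\pi$ by coloring each $e \in E(M_\pi)$ with $\zeta(e)$. The constraint $\zeta(xy) \in L_\pi(x) \cap L_\pi(y)$ prevents conflict with the previously colored edges at each endpoint, and the propriety of $\zeta$ itself prevents conflict among the newly colored edges, so the result is a proper $k$-edge-coloring of $M$.

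\emph{Inductive step.} Suppose $h \geq 1$. Clause~(2) gives distinct $a, b \in \irange{k}$ such that for every partition of $S_{L_\pi, a, b}$ into blocks of size at most two, some index set $J$ yields a modified list assignment $L'$ with $h_{M_\pi}(L', \irange{k}) < h$. The bridge back to the ambient multigraph $M$ is the family of $(a, b)$-Kempe chains of $\pi$ -- the connected components of the subgraph of $M$ formed by edges colored $a$ or $b$. Each such chain is an even cycle or a path, and the endpoints of each path are precisely the vertices incident with exactly one of $\{a, b\}$. A vertex of $V(M_\pi)$ has exactly one of $\{a, b\}$ on its colored incident edges if and only if it lies in $S_{L_\pi, a, b}$, so the Kempe chains induce a partition of $S_{L_\pi, a, b}$ into blocks of size $0$ (from even cycles), $1$ (when the other chain-endpoint lies outside $V(M_\pi)$), or $2$.

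Pick $J$ to be the index set furnished by clause~(2) for this particular partition, and let $\pi'$ be the partial coloring obtained from $\pi$ by swapping $a$ with $b$ on every Kempe chain indexed by $J$. Clearly $M_{\pi'} = M_\pi$, and the crux is to check $L_{\pi'} = L'$ on every $v \in V(M_\pi)$. This is a short case analysis: a vertex internal to any chain is incident with both $a$ and $b$ before and after any swap, so its list is unchanged and matches $L'$; a vertex of $S_{L_\pi, a, b}$ on a swapped chain sees $a$ and $b$ interchanged among its incident colors, so $a \leftrightarrow b$ is swapped in its list as prescribed by $L'$; and every other vertex of $V(M_\pi)$ is untouched by the swap. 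With $L_{\pi'} = L'$ established, $M_{\pi'}$ is $(L_{\pi'}, \irange{k})$-fixable of height strictly less than $h$, and the inductive hypothesis delivers a $k$-edge-coloring of $M$. I expect the only real obstacle to be this list-equality bookkeeping, especially handling the singleton blocks that arise from Kempe chains with one endpoint outside $V(M_\pi)$ and the even-cycle chains that contribute no block to the partition.
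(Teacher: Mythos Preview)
Your proof is correct and follows essentially the same route as the paper's: induction on the height $h_{M_\pi}(L_\pi,[k])$, with the base case handled by combining $\pi$ with an $L_\pi$-edge-coloring of $M_\pi$, and the inductive step realized by choosing an optimal pair $a,b$, reading off the partition of $S_{L_\pi,a,b}$ from the $(a,b)$-Kempe chains of $\pi$, swapping on the chains indexed by the $J$ that clause~(2) supplies, and applying induction to the resulting $\pi'$. Your bookkeeping for $L_{\pi'}=L'$ is slightly more explicit than the paper's, and your remark about ``blocks of size~0'' is a harmless informality (those chains simply contribute no block), but the argument is the same.
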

\begin{proof}
Our proof is by induction on the height of $(L_\pi,[k])$.
Choose a partial $k$-edge-coloring $\pi$ of $M$ such that $M_\pi$ is $(L_\pi,
\irange{k})$-fixable. 
If $h_{M_\pi}\parens{L_\pi, \irange{k}} = 0$, then (1) must hold for $M_\pi$
and $L_\pi$; that is, $M_\pi$ has an edge-coloring $\zeta$ such that $\zeta(x)
\in L_\pi(x) \cap L_\pi(y)$ for all $xy \in E(M_\pi)$.  Now 
$\pi \cup \zeta$ is the desired $k$-edge-coloring of $M$.  

So we may assume that $h_{M_\pi}\parens{L_\pi, \irange{k}} > 0$.  Choose colors
$a,b \in \irange{k}$ to satisfy (2) and give a tree of height
$h_{M_\pi}\parens{L_\pi, \irange{k}}$.  Let $H$ be the subgraph of $M$ induced
on all edges colored $a$ or $b$,
and let $S$ be the vertices in $M_\pi$ with degree exactly one in $H$.  
For each $x \in S$, let $C_x$ be the component of $H$ containing $x$.
Since $\card{V(C_x) \cap S} \in \set{1,2}$, the components of $H$ give a
partition $X_1, \ldots, X_t$ of $S$ into sets of size
at most two.  Further, exchanging colors $a$ and $b$ on $C_x$ has the effect
of swapping $a$ and $b$ in $L_\pi(v)$ for each $v \in V(C_x) \cap S$.  So we
achieve the needed swapping of colors in the lists in (2) by exchanging
colors on the components of $H$.  

By (2) there is $J \subseteq \irange{t}$ so
that $M_\pi$ is $(L', \irange{k})$-fixable, where $L'$ is formed from $L_\pi$ by
swapping $a$ and $b$ in $L_\pi(v)$ for every $v \in \bigcup_{i \in J} X_i$. 
In fact, there is a $J$ such that $(L',[k])$ has height less than that of
$(L,[k])$.
Let $\pi'$ be the partial $k$-edge-coloring of $M$ created from
$\pi$ by performing the color exchanges to create $L'$ from $L_\pi$.  
By the induction hypothesis, $M$ is $k$-edge-colorable.
\end{proof}

\subsection{Some examples}
A graph $G$ is \emph{$\Delta$-edge-critical}, or simply \emph{edge-critical},
if $\chi'(G)>\Delta$, but $\chi'(G-e)\le\Delta$ for every edge $e$.  
A \emph{configuration} is a subgraph $H$, along with specified degrees $d_G(v)$
in the original graph for each vertex of $H$. A configuration $H$ is
\emph{reducible} if there exists an edge $e\in E(H)$ such
that whenever $H$ appears as a subgraph (not necessarily induced) of a graph
$G$, if $G-e$ has a $\Delta$-edge-coloring, then so does $G$.
A central tool for proving reducibility for edge-coloring is Vizing's Adjacency
Lemma.  For example, it yields a short proof of Vizing's Theorem that
$\chi'(G)\le \Delta+1$ for every simple graph $G$.

\begin{VAL}
Let $G$ be a $\Delta$-critical graph.  If $xy\in E(G)$, then $x$ is adjacent to
at least $\max\{2,\Delta-d(y)+1\}$ vertices of degree $\Delta$.
\end{VAL}

We can view VAL as giving conditions for the degrees of a vertex and its
neighbors that yield a reducible configuration.  Our goal now is to prove
similar statements for larger configurations; we'd like a way to talk about
configurations being reducible for $k$-edge-coloring. 
Lemma \ref{FixableCompletesColoring} gives us this with respect to a fixed
partial $k$-edge-coloring $\pi$, but we want a condition independent of the
particular coloring.  Note that we have a lower bound on the sizes of the lists
in $L_\pi$; specifically, if $\pi$ is a partial $k$-edge-coloring of a
multigraph $M$, then $|L_{\pi}(v)| \ge k + d_{M_\pi}(v) - d_M(v)$ for every $v
\in M_{\pi}$. 
This observation motivates the following defintion.

\begin{defn}
If $G$ is a graph and $\func{f}{V(G)}{\IN}$, then $G$ is \emph{$(f,k)$-fixable} 
if $G$ is $(L, \irange{k})$-fixable for every $L$ with $|L(v)| \ge k + d_{G}(v)
- f(v)$ for all $v \in V(G)$.
\end{defn}

This definition enables us to state our desired condition on reducible configurations for
$k$-edge-coloring, which follows directly from Lemma \ref{FixableCompletesColoring}. 
\begin{obs}
If $G$ is $(f,k)$-fixable, then $G$ cannot be a subgraph of a
$(k+1)$-edge-critical graph $M$ where $d_M(v) \le f(v)$ for all $v \in V(G)$.  
\end{obs}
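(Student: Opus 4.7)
The plan is to argue by contradiction and invoke Lemma \ref{FixableCompletesColoring}. Suppose $G$ is $(f,k)$-fixable and embeds as a subgraph of a $(k+1)$-edge-critical multigraph $M$ with $d_M(v)\le f(v)$ for all $v\in V(G)$. The goal is to produce a $k$-edge-coloring of $M$, which will contradict $\chi'(M)>k$.

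First I would argue that $\chi'(M-E(G))\le k$. By criticality, $\chi'(M-e)\le k$ for every edge $e$ of $M$; picking any $e\in E(G)$ (we may assume $E(G)\ne\emptyset$, else the observation is vacuous) and using monotonicity of $\chi'$ under taking subgraphs, we get $\chi'(M-E(G))\le \chi'(M-e)\le k$. Fix a $k$-edge-coloring $\sigma$ of $M-E(G)$.

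Next I would regard $\sigma$ as a partial $k$-edge-coloring $\pi$ of $M$ whose uncolored edges are exactly $E(G)$, so that $M_\pi=G$. For each $v\in V(G)$, the number of colors forbidden at $v$ by $\pi$ equals $d_M(v)-d_G(v)$, hence
\[
|L_\pi(v)|\ \ge\ k-(d_M(v)-d_G(v))\ =\ k+d_G(v)-d_M(v)\ \ge\ k+d_G(v)-f(v),
\]
using $d_M(v)\le f(v)$. Thus $L_\pi$ satisfies the hypothesis of the definition of $(f,k)$-fixability, and so $G=M_\pi$ is $(L_\pi,\irange{k})$-fixable. Applying Lemma \ref{FixableCompletesColoring} to the partial coloring $\pi$ yields a $k$-edge-coloring of all of $M$, contradicting that $M$ has $\chi'(M)>k$.

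There is really no obstacle here beyond bookkeeping: the only content is the observation that critical multigraphs remain $k$-edge-colorable after deleting any subset of edges, which lets us uncolor precisely $E(G)$ and feed the resulting list assignment into the fixability definition. Everything else is matching up the size bound $|L_\pi(v)|\ge k+d_G(v)-f(v)$ with the definition and invoking the lemma.
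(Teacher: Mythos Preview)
Your proof is correct and is exactly the argument the paper has in mind when it says the observation ``follows directly from Lemma~\ref{FixableCompletesColoring}''; you have simply filled in the details the paper omits. One small quibble: when $E(G)=\emptyset$ the observation is not vacuous but actually false (a single vertex is trivially $(f,k)$-fixable yet sits inside any critical graph), so it would be more accurate to say the statement carries an implicit assumption that $G$ has at least one edge, which is harmless in context since configurations always do.
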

Now we can talk about a graph $G$ with vertices labeled by $f$ being
$k$-fixable.  The computer is extremely good at finding $k$-fixable
graphs.  Combined with discharging arguments\footnote{The discharging method is
a counting technique commonly used in coloring proofs to show that the graph
under consideration must contain a reducible configuration. For an introduction
to this method,
see~\cite{discharging13}.}, this gives a powerful method for proving (modulo
trusting the computer) edge-coloring results for small $\Delta$.  We'll see
some examples of such proofs later; for now Figure \ref{fig:small3} shows some
$3$-fixable graphs.  A gallery of hundreds more fixable graphs is available at
\url{https://dl.dropboxusercontent.com/u/8609833/Web/GraphData/Fixable/index.html}.

	\begin{figure}[htb]
		\includegraphics[scale=0.25]{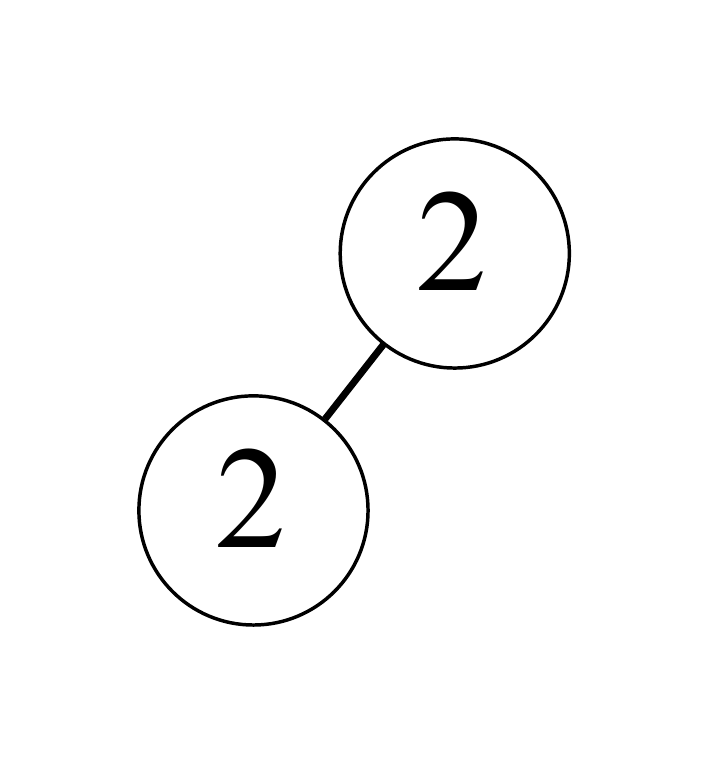}
		\includegraphics[scale=0.25]{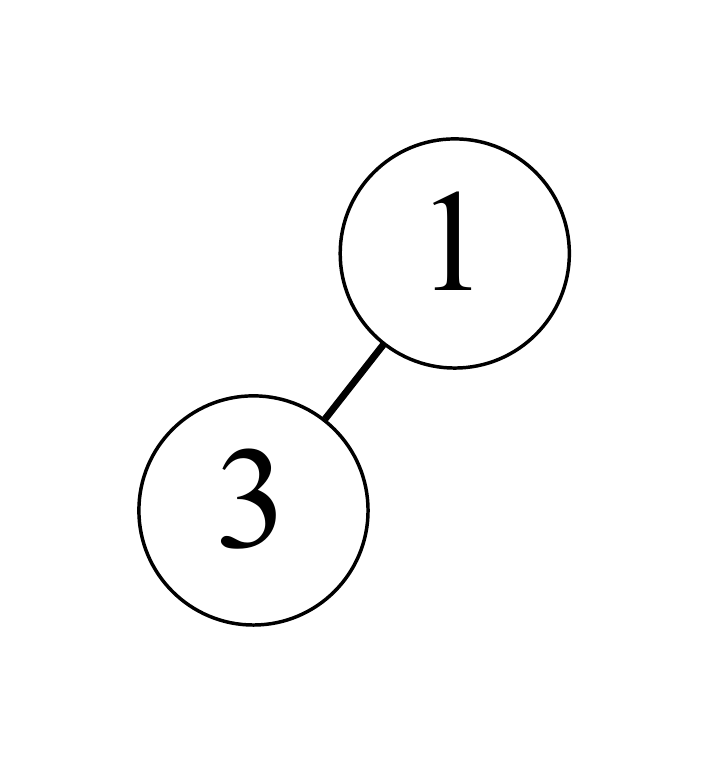}
		\includegraphics[scale=0.25]{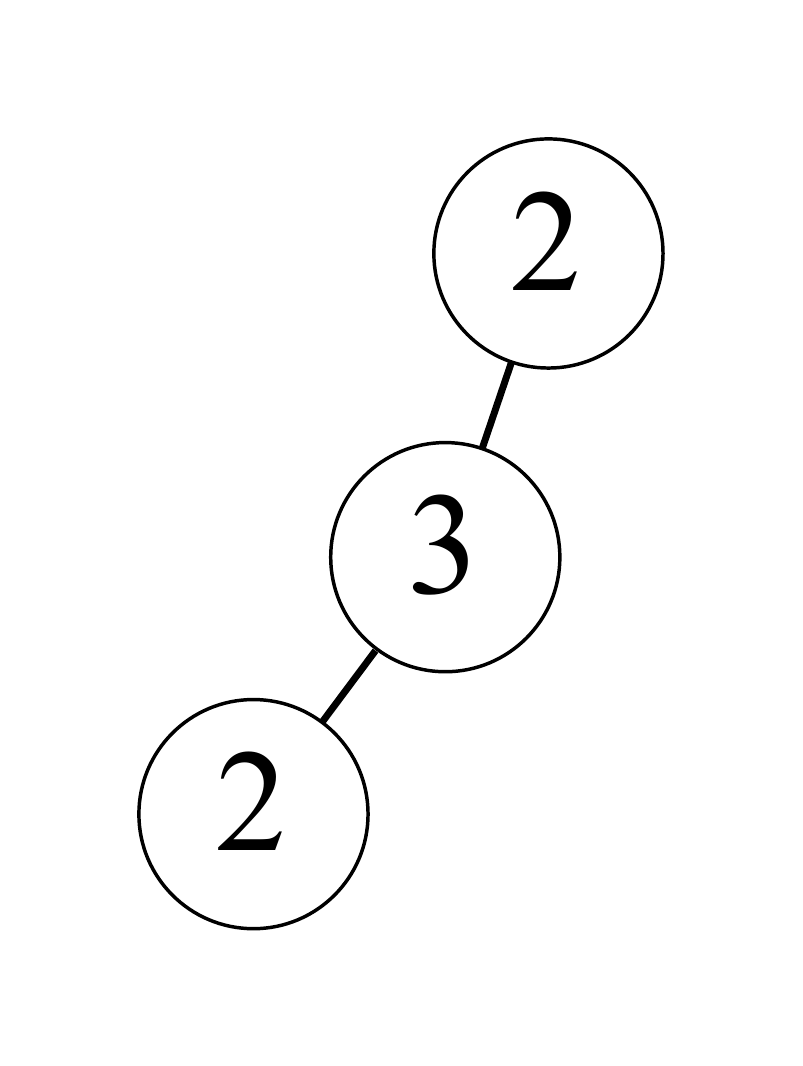}
		\includegraphics[scale=0.25]{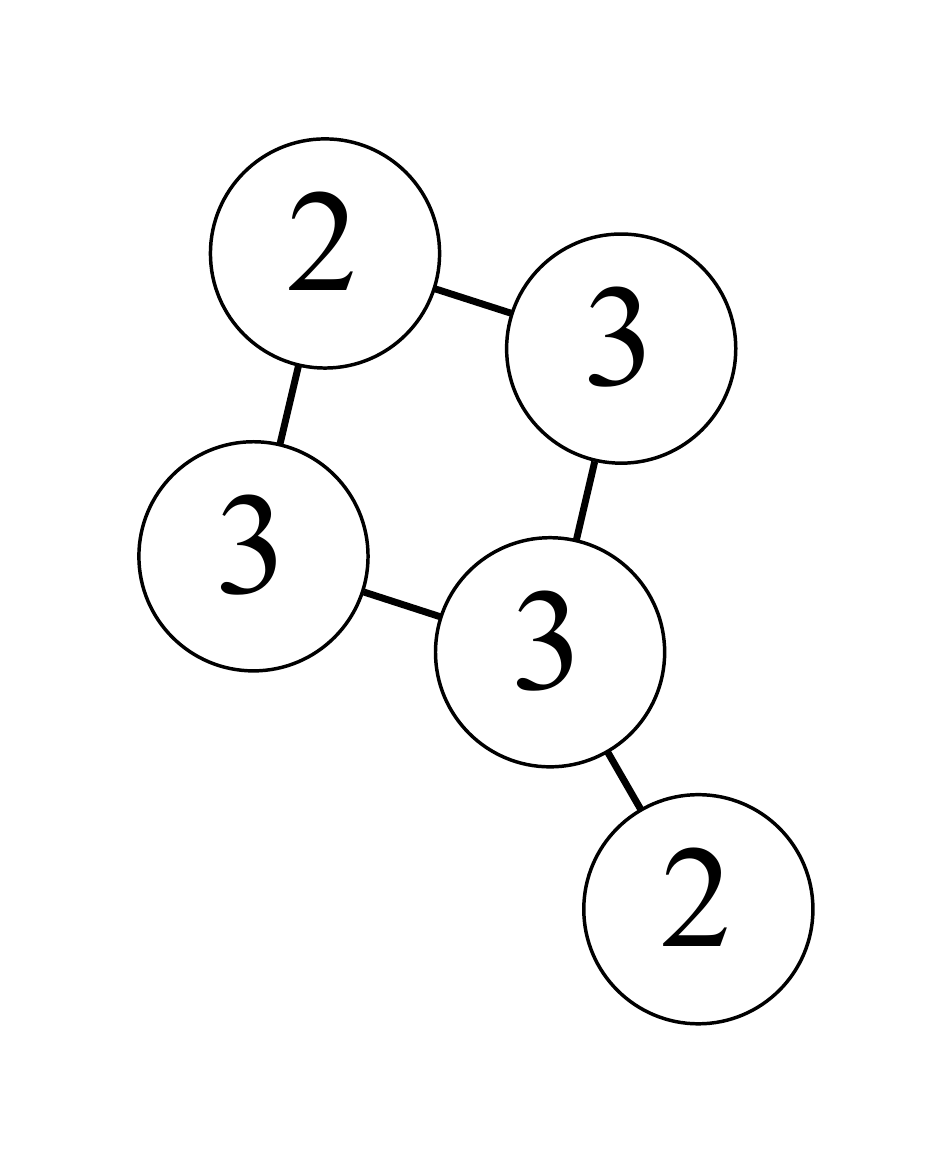}
		\includegraphics[scale=0.25]{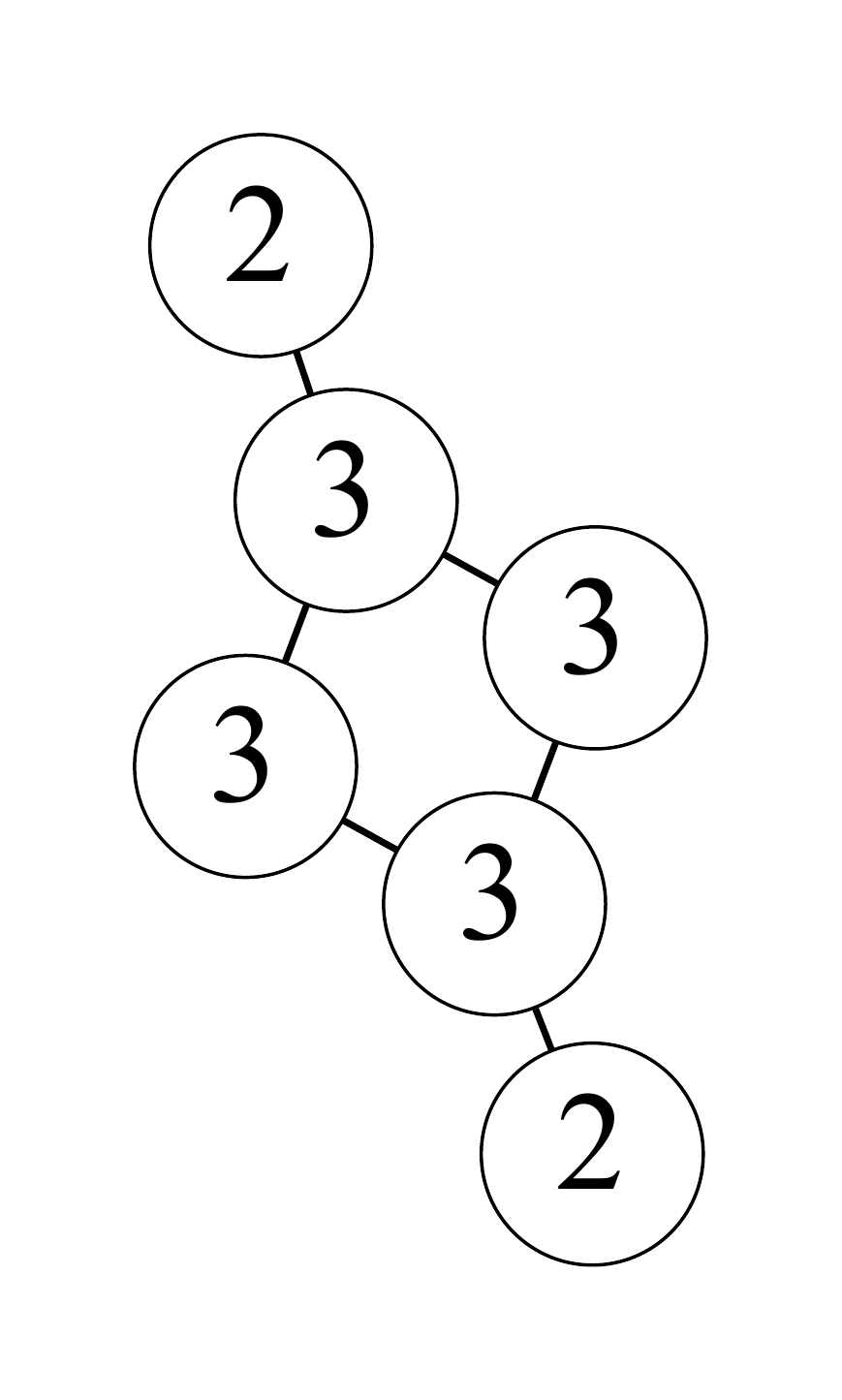}
     	\includegraphics[scale=0.25]{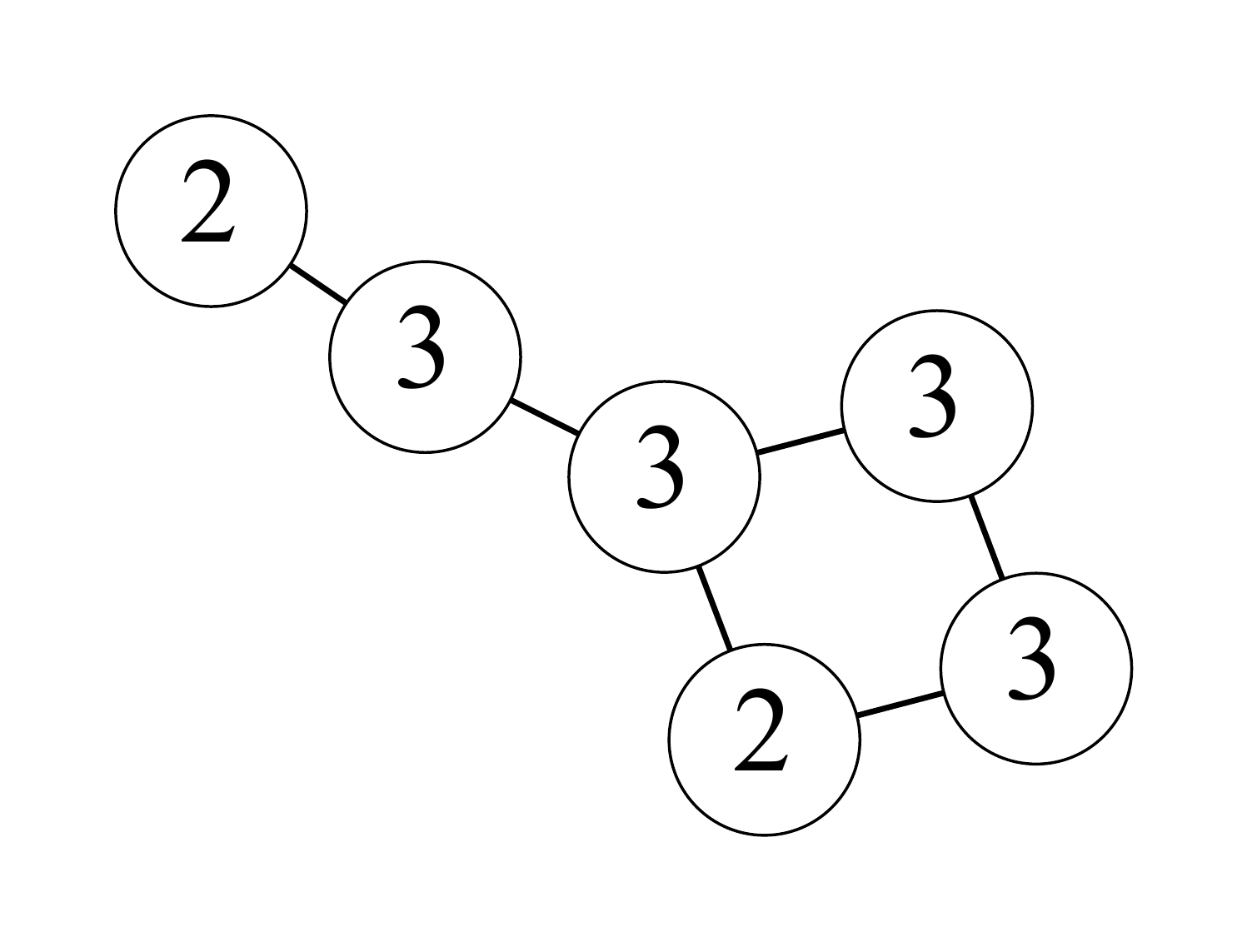}
     	\includegraphics[scale=0.25]{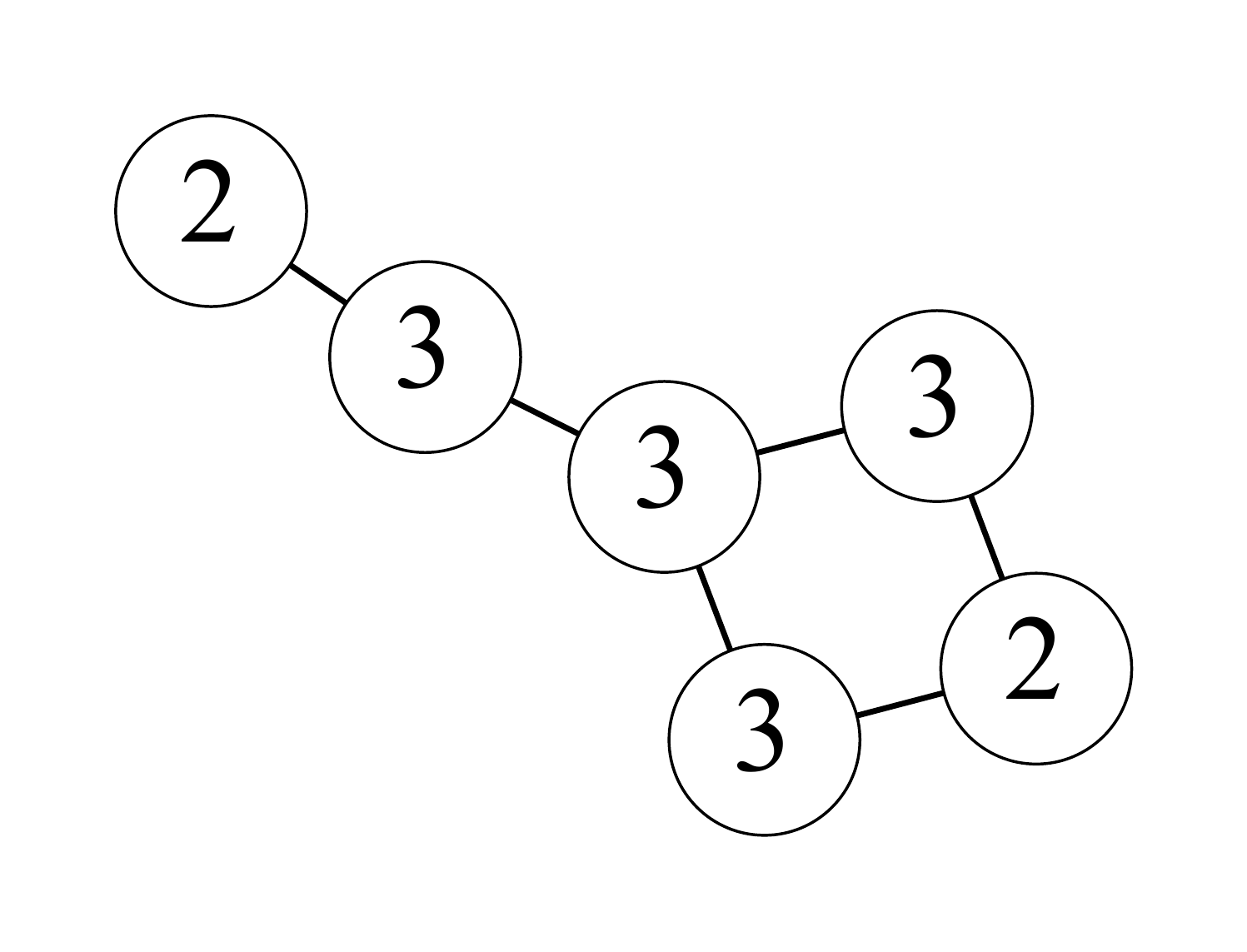}
     	\includegraphics[scale=0.25]{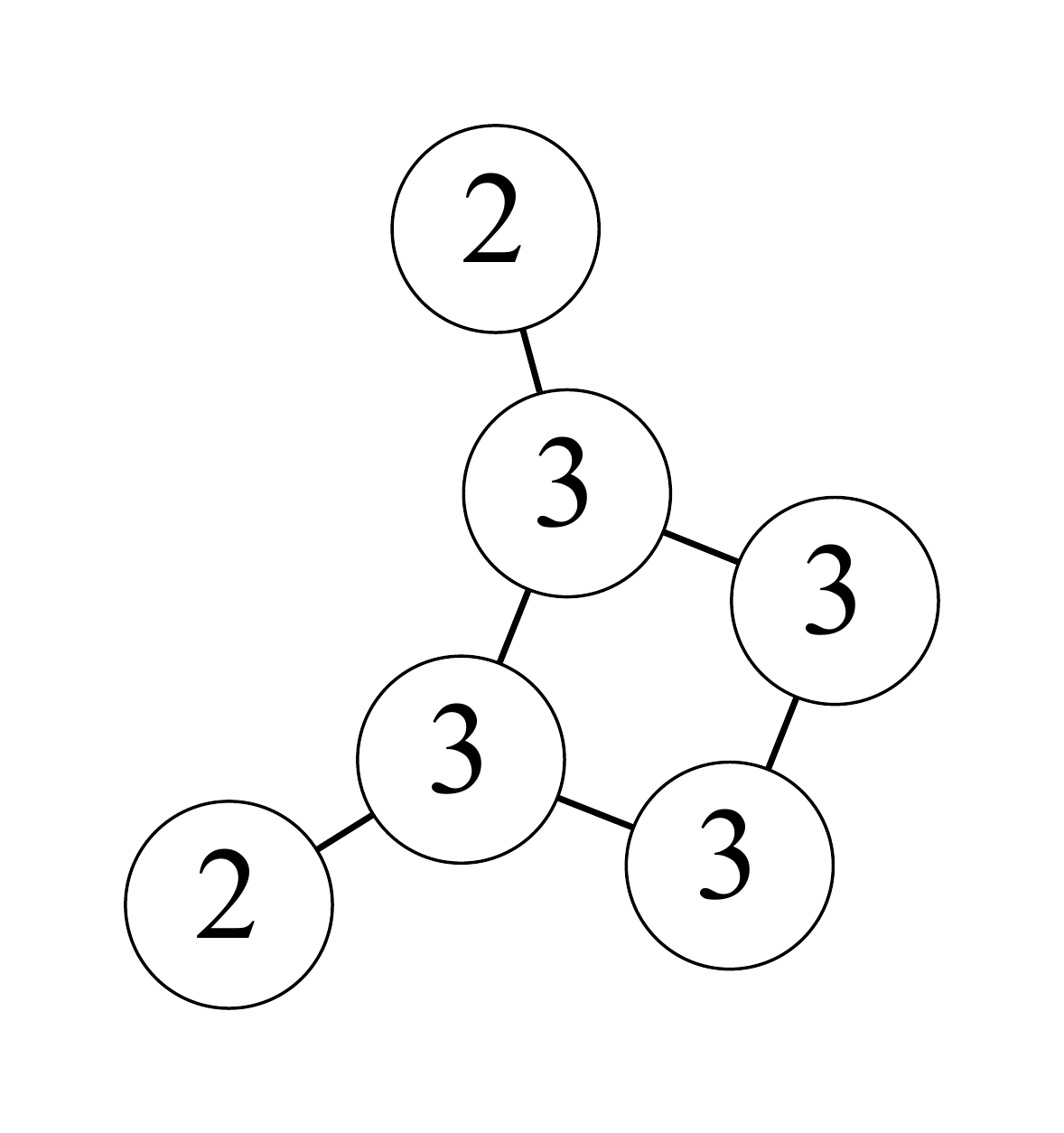}
     	\includegraphics[scale=0.25]{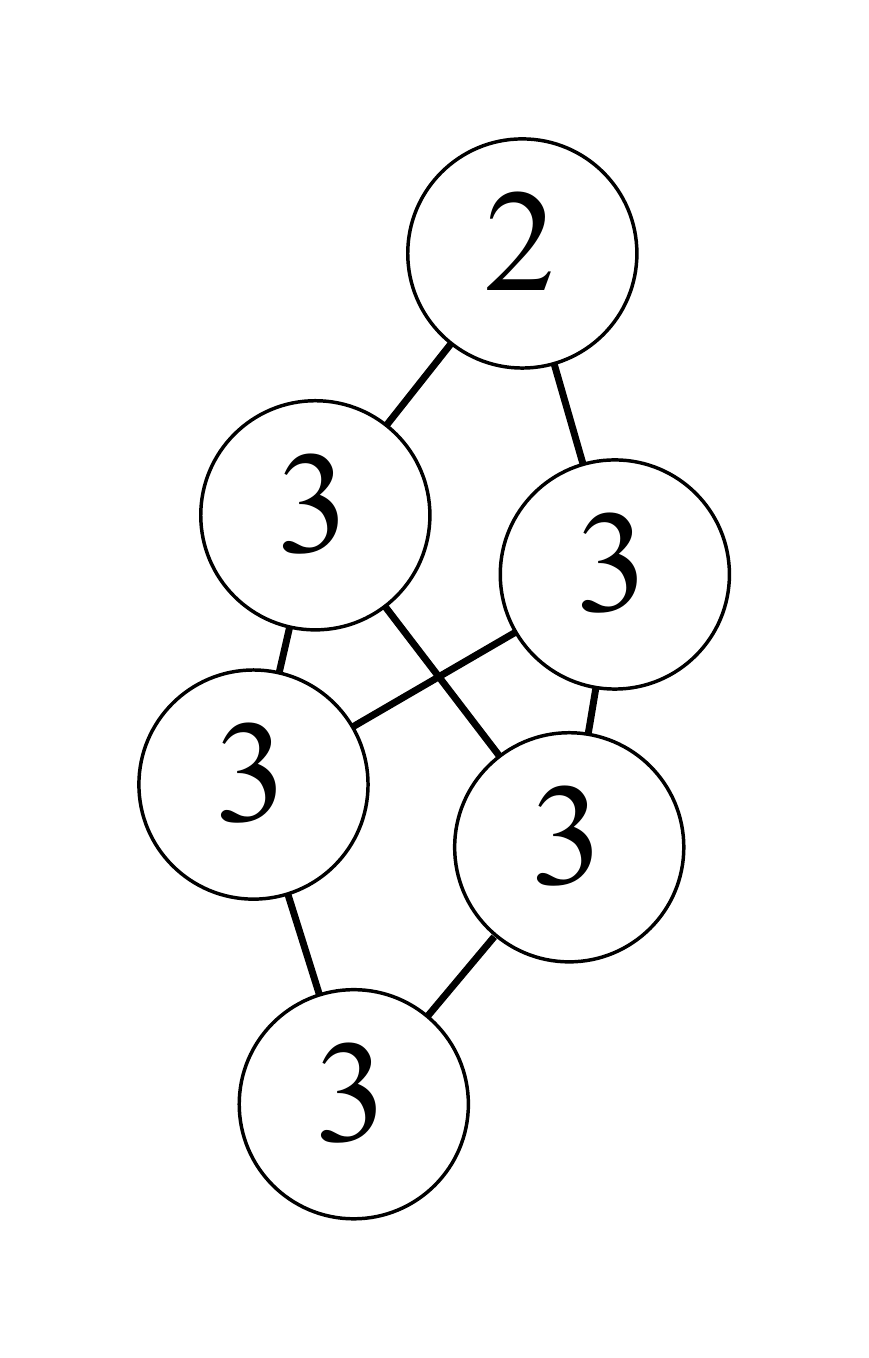}
     	\includegraphics[scale=0.25]{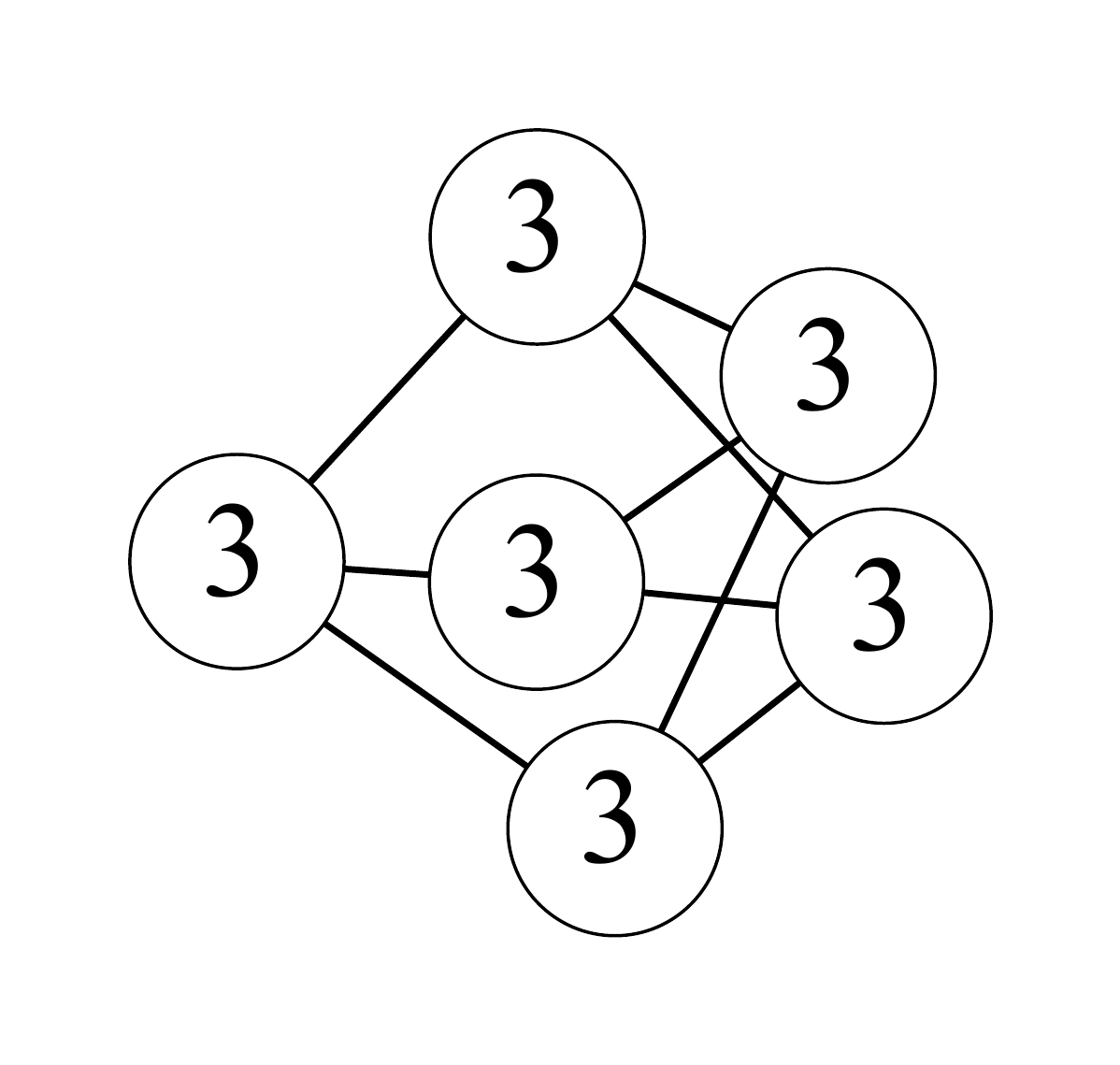}
     	\includegraphics[scale=0.25]{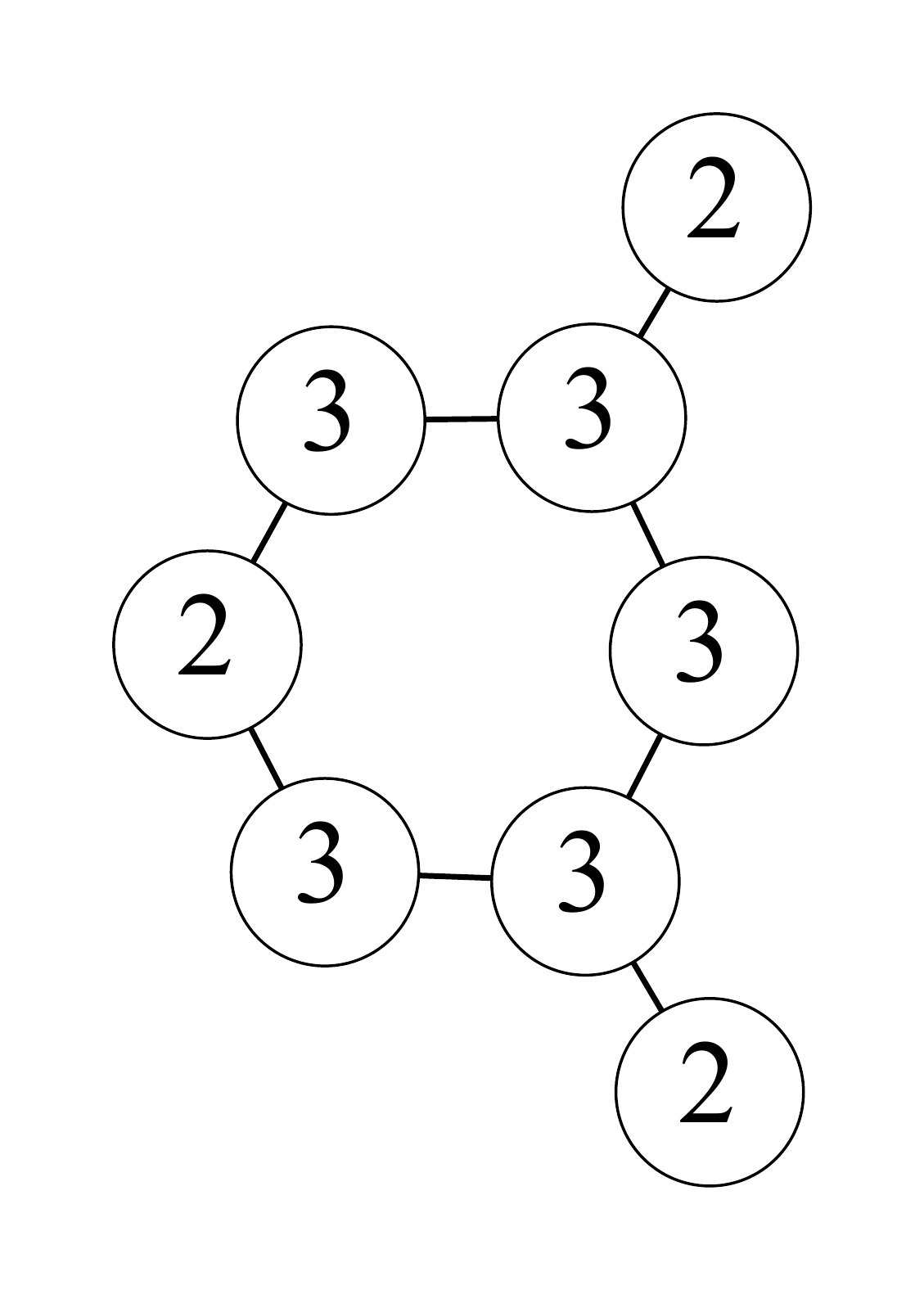}
		\caption{Some small $3$-fixable graphs. (The label for each vertex specifies
its degree in G.)\label{fig:small3}}
		\end{figure}

The penultimate graph in Figure \ref{fig:small3} is an example of the more
general fact that a $k$-regular graph with $f(v) = k$ for all $v$ is $k$-fixable
precisely when it is $k$-edge-colorable.  That the third graph in Figure
\ref{fig:small3} is reducible follows from Vizing's Adjacency Lemma.
\subsection{A necessary condition}
Since the edges incident to a vertex $v$ must all get different colors, 
%
if $G$ is $(L, P)$-fixable, then $|L(v)| \ge d_G(v)$ for all $v \in V(G)$.
%
By considering the maximum size of matchings in each color, we get a more
interesting necessary condition.
For each $C \subseteq \pot(L)$ and $H \subseteq G$, let $H_{L, C}$ be the
subgraph of $H$ induced by the vertices $v$ with $L(v) \cap C \ne \emptyset$. 
When $L$ is clear from context, we write $H_C$ for $H_{L,C}$. If $C =
\set{\alpha}$, we write $H_\alpha$ for $H_C$.  For $H \subseteq G$, let

\[\psi_L(H) = \sum_{\alpha \in \pot(L)} \floor{\frac{\card{H_{L, \alpha}}}{2}}.\]
Each term in the sum gives an upper bound on the size of a matching in color
$\alpha$. So $\psi_L(H)$ is an upper bound on the number of edges in a
partial $L$-edge-coloring of $H$.  The pair $(H, L)$ is \emph{abundant} if
$\psi_L(H) \ge \size{H}$ and $(G,L)$ is \emph{superabundant} if for every
$H \subseteq G$, the pair $(H, L)$ is abundant.  

\begin{lem}
\label{SuperabundanceIsNecessary} 
If $G$ is $(L, P)$-fixable, then $(G, L)$ is superabundant.
\end{lem}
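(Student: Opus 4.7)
The plan is to induct on the height $h_G(L,P)$.

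In the base case $h_G(L,P) = 0$, clause (1) applies, so $G$ has an $L$-edge-coloring $\pi$. For any subgraph $H \subseteq G$, the restriction of $\pi$ to $E(H)$ is an $L$-edge-coloring of $H$ in which the color-$\alpha$ class is a matching saturating only vertices whose list contains $\alpha$, contributing at most $\floor{\card{H_{L,\alpha}}/2}$ edges. Summing over $\alpha \in \pot(L)$ yields $\size{H} \le \psi_L(H)$, and since this holds for every $H \subseteq G$, the pair $(G,L)$ is superabundant.

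For the inductive step, assume $h_G(L,P) > 0$ and argue by contradiction: fix $H^* \subseteq G$ with $\psi_L(H^*) < \size{H^*}$. Let $a,b \in P$ be the colors witnessing clause (2) at the root of an optimal fixing tree, and set $A = \setb{v}{V(G)}{a \in L(v),\ b \notin L(v)}$ and $B = \setb{v}{V(G)}{b \in L(v),\ a \notin L(v)}$, so $S_{L,a,b} = A \cup B$. I choose the partition of $S_{L,a,b}$ strategically: first form $\min(\card{A \cap V(H^*)}, \card{B \cap V(H^*)})$ pairs each with one endpoint in $A \cap V(H^*)$ and one in $B \cap V(H^*)$; then pair the leftover vertices of $(A \cup B) \cap V(H^*)$ among themselves (leaving at most one singleton inside $V(H^*)$, present exactly when $\card{A \cap V(H^*)} + \card{B \cap V(H^*)}$ is odd); finally, partition $S_{L,a,b} \setminus V(H^*)$ arbitrarily. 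Clause (2) hands me some $J \subseteq \irange{t}$ making $G$ be $(L',P)$-fixable at height strictly less than $h_G(L,P)$, and then the inductive hypothesis gives that $(G,L')$ is superabundant, hence $\psi_{L'}(H^*) \ge \size{H^*}$. I will derive a contradiction by showing that my partition forces $\psi_{L'}(H^*) = \psi_L(H^*)$ for \emph{every} choice of $J$.

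Let $p = \card{H^*_{L,a}}$, $q = \card{H^*_{L,b}}$, with $p',q'$ the analogues for $L'$. Since swapping $a$ and $b$ in any list preserves $\card{\set{a,b} \cap L(v)}$, we have $p + q = p' + q'$; lists for colors other than $a,b$ are unchanged, so the desired equality reduces to $\floor{p'/2} + \floor{q'/2} = \floor{p/2} + \floor{q/2}$. If $\card{A \cap V(H^*)} + \card{B \cap V(H^*)}$ is even, then each block of the partition contributes $0$ or $\pm 2$ to $p - p'$: mixed pairs give $0$, pairs internal to $A \cap V(H^*)$ or $B \cap V(H^*)$ give $\pm 2$, and blocks disjoint from $V(H^*)$ give $0$. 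Hence $p \equiv p' \pmod 2$ and $q \equiv q' \pmod 2$, and combined with $p+q = p'+q'$ this preserves $\floor{p/2} + \floor{q/2}$. If instead $\card{A \cap V(H^*)} + \card{B \cap V(H^*)}$ is odd, then $p+q$ is odd, so $\floor{p/2} + \floor{q/2} = (p+q-1)/2$ is determined by $p+q$ alone, and the same formula gives $\floor{p'/2} + \floor{q'/2}$ regardless of what the singleton does.

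The main obstacle is spotting the right partition. A naive choice---e.g., making every block a singleton---lets the adversary pick $J$ that flips the parity of $\card{H^*_{L,a}}$ and drives $\psi_{L'}(H^*)$ strictly above $\psi_L(H^*)$, breaking the induction. Matching $A \cap V(H^*)$ against $B \cap V(H^*)$ first and then pairing internally locks down the parity of $\card{H^*_{L,a}}$ under every possible $J$, which is exactly what makes the sum of floors invariant and closes the induction.
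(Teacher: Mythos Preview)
Your proof is correct and follows essentially the same approach as the paper: both arguments construct, for the non-abundant witness $H^*$, a partition of $S_{L,a,b}$ (mixed $A$--$B$ pairs inside $V(H^*)$ first, then same-side pairs, then the rest) that prevents $\psi_L(H^*)$ from increasing under any choice of $J$. Your version packages this as an explicit induction on height and splits cases by the parity of $\card{(A\cup B)\cap V(H^*)}$, which is a slightly cleaner presentation than the paper's, but the core idea is identical.
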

\begin{proof}
Suppose instead that $G$ is $(L, P)$-fixable and there is $H \subseteq
G$ such that $(H, L)$ is not abundant. We show that for all distinct $a,b \in
P$ there is a partition $X_1, \ldots, X_t$ of $S_{a,b}$ into sets of size at
most two, such that for all $J \subseteq \irange{t}$, the pair $(H,L')$ is not
abundant, where $L'$ is formed from $L$ by swapping $a$ and $b$ in $L(v)$ for
every $v \in \bigcup_{i \in J} X_i$.  Since $G$ can only be edge-colored from
a superabundant list assignment, this contradicts that $G$ is $(L,P)$-fixable.

Pick distinct colors $a,b \in P$.  Let $S = S_{L,a,b} \cap V(H)$, let $S_a$ be the
$v \in S$ with $a \in L(v)$, and let $S_b = S\setminus S_a$.  
In the sum for $\psi_L(H)$, swapping $a$ and $b$ only effects the terms
$\floor{\frac{\card{S_a}}{2}}$ and $\floor{\frac{\card{S_b}}{2}}$.
So, if $\psi_L(H)$ is increased
by the swapping, it must be that both $|S_a|$ and $|S_b|$ are odd, and after
swapping they are both even.  Say $S_a = \set{a_1, \ldots,a_p}$ and $S_b =
\set{b_1, \ldots,b_q}$.  By symmetry, we assume $p \le q$.  For each $i \in
\irange{p}$, let $X_i = \set{a_i, b_i}$.  Since $p$ and $q$ are both odd, $q-p$
is even, so we get a partition by, for each $j \in \irange{\frac{q-p}{2}}$,
letting $X_{p + j} = \set{b_{p + 2j - 1}, b_{p + 2j}}$.  For any $i \in
\irange{p}$, swapping $a$ and $b$ in $L(v)$ for every $v \in X_i$ maintains
$|S_a|$ and $|S_b|$.  For any $j \in \irange{\frac{q-p}{2}}$, swapping $a$ and
$b$ in $L(v)$ for every $v \in X_{p+j}$ maintains the parity of $|S_a|$ and
$|S_b|$.  So no choice of $J$ can increase $\psi_L(H)$. Thus, $(H,L')$ is
never abundant.
\end{proof}

In particular, we conclude the following.

\begin{cor}
If $G$ is $(f,k)$-fixable, then $(G,L)$ is superabundant for every $L$ with
$L(v) \subseteq \irange{k}$ and $|L(v)| \ge k + d_{G}(v) - f(v)$ for all $v \in V(G)$.
\end{cor}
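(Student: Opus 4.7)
The plan is to chain the definition of $(f,k)$-fixable with Lemma~\ref{SuperabundanceIsNecessary}; no new combinatorial work is required.

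First, fix a list assignment $L$ with $L(v) \subseteq \irange{k}$ and $|L(v)| \ge k + d_G(v) - f(v)$ for every $v \in V(G)$. The inclusion $L(v) \subseteq \irange{k}$ guarantees that $\pot(L) \subseteq \irange{k}$, so $\irange{k}$ is a legitimate $L$-pot and it makes sense to speak of $(L,\irange{k})$-fixability.

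Next, since $G$ is $(f,k)$-fixable by hypothesis and $L$ satisfies the size lower bound in the definition of $(f,k)$-fixability, we conclude that $G$ is $(L,\irange{k})$-fixable.

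Finally, apply Lemma~\ref{SuperabundanceIsNecessary} with $P = \irange{k}$ to deduce that $(G,L)$ is superabundant. Since $L$ was an arbitrary list assignment satisfying the hypotheses of the corollary, we are done. The only thing to be careful about is the distinction between the implicit pot $\pot(L)$ in the definition of $(f,k)$-fixability and the specified pot $\irange{k}$; the condition $L(v) \subseteq \irange{k}$ in the corollary statement is precisely what reconciles this. There is no genuine obstacle here, as the work is already done by the preceding lemma.
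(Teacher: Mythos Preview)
Your proof is correct and is exactly the argument the paper intends: the corollary is stated immediately after Lemma~\ref{SuperabundanceIsNecessary} with the phrase ``In particular, we conclude the following'' and no further proof, so you have supplied the one-line deduction the authors left implicit. Your closing remark about pots is slightly off --- the definition of $(f,k)$-fixable already uses $\irange{k}$ as the pot, not $\pot(L)$ --- but this does not affect the validity of the argument.
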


Intuitively, superabundance requires the potential for a large enough matching
in each color. If instead we require the existence of a large enough matching
in each color, then we get a stronger condition that has been studied before.
For a multigraph $H$, let $\nu(H)$ be the number of edges in a maximum matching
of $H$.  For a list assignment $L$ on $H$, let 
$$\eta_L(H) = \sum_{\alpha \in \pot(L)} \nu(H_\alpha).$$  
Note that always $\psi_L(H) \ge \eta_L(H)$.

The following generalization of Hall's theorem was proved by Marcotte and
Seymour \cite{marcotte1990extending} and independently by Cropper,
Gy{\'a}rf{\'a}s, and Lehel \cite{cropper2003edge}.  By a \emph{multitree} we
mean a tree that possibly has edges of multiplicity greater than one.

\begin{lem}[Marcotte and Seymour]\label{MultiTreeHall}
Let $T$ be a multitree and $L$ a list assignment on $V(T)$.  If $\eta_L(H) \ge
\size{H}$ for all $H \subseteq T$, then $T$ has an $L$-edge-coloring.
\end{lem}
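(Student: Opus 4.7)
Plan: The plan is to induct on $\size{T}$. The base case $\size{T} = 0$ is trivial, and for the inductive step I split into two cases according to whether the Hall-type hypothesis is strict on every nonempty proper subgraph of $T$.

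Case B (strict slack). Assume $\eta_L(H) \ge \size{H} + 1$ for every nonempty proper $H \subsetneq T$. Pick a leaf $v$ of the underlying simple tree of $T$, let $u$ be its unique neighbor, and let $e$ be any of the parallel $uv$-edges; by the hypothesis applied to $\{e\}$, $L(u) \cap L(v)\ne \emptyset$, so choose $\alpha \in L(u) \cap L(v)$ and define $L'$ to agree with $L$ except $L'(u) = L(u) \setminus \{\alpha\}$. For any $H' \subseteq T - e$ we have $H' \subsetneq T$, so $\eta_L(H') \ge \size{H'} + 1$ by strictness, and passing from $L$ to $L'$ shrinks only $H'_{L,\alpha}$ (by the single vertex $u$), reducing the matching in color $\alpha$ by at most one. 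Hence $\eta_{L'}(H') \ge \size{H'}$, and induction gives an $L'$-edge-coloring $\pi'$ of $T-e$. Since $\alpha \notin L'(u)$, no edge of $T-e$ at $u$ uses $\alpha$, and because $v$'s only neighbor is $u$, no edge of $T-e$ at $v$ uses $\alpha$ either; extending by $\pi(e) = \alpha$ yields the desired coloring.

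Case A (some nonempty proper tight subgraph). Pick an inclusion-maximal nonempty proper $H \subsetneq T$ with $\eta_L(H) = \size{H}$. The underlying graph of $H$ is a subforest of $T$, so $H$ is a disjoint union of multitrees; applying induction componentwise yields an $L$-edge-coloring $\pi_H$ of $H$. Define $L'$ on $V(T - E(H))$ by $L'(w) = L(w) \setminus \{\pi_H(e) : e \in E(H),\ w \in e\}$, and then apply induction to $(T - E(H), L')$ to extend $\pi_H$ to the whole of $T$. Tightness forces $\pi_H^{-1}(\gamma)$ to be a maximum matching in $H_\gamma$ for every color $\gamma$, and for each $H'' \subseteq T-E(H)$ one obtains the bound $\eta_L(H \cup H'') \ge \size{H} + \eta_{L'}(H'')$ by combining $\pi_H^{-1}(\gamma)$ with any matching in $H''_\gamma$ avoiding its endpoints.

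The main obstacle is that this inequality yields only the upper bound $\eta_{L'}(H'') \le \eta_L(H \cup H'') - \size{H}$, whereas applying induction to $(T - E(H), L')$ requires the matching \emph{lower} bound $\eta_{L'}(H'') \ge \size{H''}$; the hypothesis on $H \cup H''$ alone does not close this gap, and a poor choice of $\pi_H$ really can leave $\eta_{L'}(H'') < \size{H''}$ when the maximum matchings in $H_\gamma$ are not unique. The approach I would take is an exchange argument: for each $H''$ the hypothesis on $H \cup H''$ produces a matching family $\{M^*_\gamma\}$ whose restriction $\{C_\gamma\}$ to $H$ is (by tightness) a valid $L$-edge-coloring of $H$ and whose restriction $\{D_\gamma\}$ to $H''$ has total size at least $\size{H''}$; since $H_\gamma$ is a forest, the symmetric difference of $C_\gamma$ with $\pi_H^{-1}(\gamma)$ decomposes into alternating paths and even alternating cycles in $H_\gamma$, along which one may augment $\pi_H$ without changing $\sum \size{D_\gamma}$. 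Together with the inclusion-maximality of $H$, this should yield a single $\pi_H$ compatible with every $H''$ simultaneously; carrying out this compatibility argument rigorously, and ruling out the boundary case $H \cup H'' = T$ when $T$ itself is tight, is the technical heart of the proof.
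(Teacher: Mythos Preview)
The paper does not prove this lemma; it only cites Marcotte--Seymour and Cropper--Gy\'arf\'as--Lehel. So there is no ``paper's approach'' to compare against, and I evaluate your argument on its own.

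Your Case~B is correct. Case~A, however, has a genuine gap that you yourself flag but do not close. Concretely, it is \emph{not} true that an arbitrary $L$-edge-coloring $\pi_H$ of a maximal proper tight subgraph $H$ leaves $(T-E(H),L')$ satisfying the hypothesis. Take the path $T=w\,x\,y\,z$ with $L(w)=\{\gamma\}$ and $L(x)=L(y)=L(z)=\{\gamma,\delta\}$. One checks $\eta_L(H)\ge\size{H}$ for every $H\subseteq T$, and $H=\{xy,yz\}$ is a maximal proper tight subgraph. The coloring $\pi_H(xy)=\gamma$, $\pi_H(yz)=\delta$ is a valid $L$-edge-coloring of $H$ (and each $\pi_H^{-1}(\cdot)$ is a maximum matching in its color class), yet $L'(x)=\{\delta\}$ and $H''=\{wx\}$ has $\eta_{L'}(H'')=|L(w)\cap L'(x)|=0<1$. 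So the inductive call on $(T-E(H),L')$ fails for this $\pi_H$, even though the other coloring of $H$ would succeed. Your proof, which obtains $\pi_H$ from a black-box inductive call, cannot control which coloring is returned.

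Your proposed exchange argument does not repair this. The assertion that the restrictions $C_\gamma=M^*_\gamma\cap E(H)$ form a valid $L$-edge-coloring of $H$ ``by tightness'' is unjustified: the maximum matchings $M^*_\gamma$ for different colors need not be edge-disjoint, so an edge of $H$ may lie in several $C_\gamma$ and others in none. Tightness of $H$ bounds $\sum_\gamma|C_\gamma|\le\size{H}$, but it does not force the $C_\gamma$ to partition $E(H)$. Even granting that for each fixed $H''$ one can massage the matchings into a coloring of $H$ that is good for that $H''$, you give no mechanism for producing a \emph{single} $\pi_H$ good for all $H''$ simultaneously; maximality of $H$ is invoked but never used in any concrete way. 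As it stands, Case~A is a plan, not a proof.
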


In \cite{HallGame}, the second author proved that superabundance itself is also a
sufficient condition for fixability, when we restrict our graphs to be
multistars.  This result immediately implies the \emph{fan equation}, which is
an extension of Vizing's Adjacency Lemma to multigraphs and a standard tool in
proving reducibility for edge-coloring (see \cite[p.~19ff]{stiebitz2012graph}).
 The proof for multistars uses Hall's
theorem to reduce to a smaller star and one might hope that we could do the
same for arbitrary trees, with Lemma \ref{MultiTreeHall} in place of Hall's
theorem (thus giving a short proof that Tashkinov trees are elementary), but we
haven't yet made this work.

\subsection{Fixability of stars}
When $G$ is a star, superabundance implies fixability (provided that $|L(v)|\ge
d_G(v)$ for each vertex $v$), and this result generalizes Vizing fans
\cite{Vizing76}.  In \cite{HallGame}, the second author proved a
common generalization of 
this and of Hall's theorem; below we reproduce the proof for the special case
of edge-coloring.  In the next section we define ``Kierstead-Tashkinov-Vizing
assignments'' and show that they are always superabundant.  

\begin{thm}\label{FixabilityOfStars}
If $G$ is a multistar, then $G$ is $L$-fixable if and only if $(G, L)$ is
superabundant and $|L(v)| \ge d_G(v)$ for all $v \in V(G)$.
\end{thm}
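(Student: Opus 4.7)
The forward direction is immediate. Superabundance of $(G,L)$ follows from Lemma~\ref{SuperabundanceIsNecessary}, and the bounds $|L(v)|\ge d_G(v)$ hold because the $d_G(v)$ edges at $v$ must receive pairwise distinct colors from $L(v)$.

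For the reverse direction, set $P=\pot(L)$ and let $u$ be the center of $G$, with leaves $v_1,\ldots,v_n$ and multiplicities $m_i=d_G(v_i)$. The plan is to form the bipartite auxiliary graph $B$ whose parts are the edges of $G$ (with multiplicity) and $L(u)$, joining $uv_i$ to $\alpha\in L(u)$ iff $\alpha\in L(v_i)$. A matching in $B$ saturating $E(G)$ is the same as an $L$-edge-coloring of $G$, so if Hall's condition holds for $B$ we are done via clause~(1). Otherwise we induct on the Hall deficit $\delta(L)=\max_{S\subseteq\{v_1,\ldots,v_n\}}\bigl(\sum_{v_i\in S}m_i-|\bigcup_{v_i\in S}(L(u)\cap L(v_i))|\bigr)$.

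Assuming $\delta(L)>0$, fix a deficit-maximizing set $S$ and let $H$ be the substar on $\{u\}\cup S$. Writing $k_\alpha=|\{v_i\in S:\alpha\in L(v_i)\}|$, we have $\psi_L(H)=\sum_{\alpha\in L(u)}\lfloor(1+k_\alpha)/2\rfloor+\sum_{\alpha\notin L(u)}\lfloor k_\alpha/2\rfloor$. Superabundance $\psi_L(H)\ge\size{H}$ combined with the positive deficit forces a pair of colors $a,b\in\pot(L)$ distributed unevenly between $u$ and the leaves of $S$ --- the right choice being a color $a$ concentrated in the leaves of $S$ and a complementary color $b$ concentrated at $u$. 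We then verify clause~(2): for each partition $X_1,\ldots,X_t$ of $S_{L,a,b}$ into sets of size at most two, choose $J$ to include the part meeting $u$ together with any additional parts needed to preserve superabundance (possible by a parity-balancing argument analogous to the one in the proof of Lemma~\ref{SuperabundanceIsNecessary}). The resulting $L'$ has $|L'(v)|=|L(v)|\ge d_G(v)$, remains superabundant, and satisfies $\delta(L')<\delta(L)$, so the induction hypothesis gives that $G$ is $(L',P)$-fixable, and therefore $(L,P)$-fixable.

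The main obstacle is making the choice of $a$, $b$, and $J$ explicit enough to ensure simultaneously that (i) superabundance of $(G,L')$ survives every adversarial partition of $S_{L,a,b}$, (ii) $\delta(L')<\delta(L)$, and (iii) the per-vertex degree bounds remain intact. The subtlety is that a single $J$ must preserve abundance for \emph{every} subgraph $K\subseteq G$ at once, not just the witness substar $H$, so the argument must coordinate these constraints globally; this is the combinatorial heart of the proof and essentially the content of~\cite{HallGame}.
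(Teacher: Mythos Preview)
Your forward direction is fine, but the reverse direction is not a proof: you yourself flag that the choice of $a,b,J$ achieving (i)--(iii) simultaneously is ``the combinatorial heart of the proof'' and then leave it unresolved. Two concrete problems make your outline unlikely to close as written. First, the appeal to ``a parity-balancing argument analogous to the one in the proof of Lemma~\ref{SuperabundanceIsNecessary}'' is pointed in the wrong direction: that lemma builds an \emph{adversarial} partition that \emph{prevents} abundance from improving; it does not tell the fixer how to choose $J$ so that superabundance survives for every subgraph. Second, ``the part meeting $u$'' need not exist, since $u\notin S_{L,a,b}$ whenever $\{a,b\}\subseteq L(u)$, and in fact the useful swaps in the paper leave $L(u)$ untouched.

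The paper's argument is structured differently and avoids your obstacle. It runs a double induction: minimize $\|G\|$, and subject to that maximize $\eta_L(G)=\sum_\alpha \nu(G_\alpha)$ (not the Hall deficit). If $\eta_L(G)<\|G\|$, one finds a color $\tau\in L(z)$ missing from every leaf and a second color $\beta$ (or $\gamma$) with the right distribution, and shows that \emph{whatever} part $X$ the adversary hands you containing a chosen vertex, swapping on $X$ strictly increases $\eta_L$; a short case check (using that $\tau$ appears only at $z$) shows $\psi_{L'}(H)\ge\psi_L(H)$ for every substar $H$, so superabundance is preserved automatically---no global coordination of $J$ is needed. If instead $\eta_L(G)\ge\|G\|$, one does not swap at all: Hall's theorem on a minimal tight set $C'\subseteq C$ yields a perfect matching that colors some edges, and deleting them gives a strictly smaller multistar that is still superabundant. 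The missing idea in your sketch is exactly this second branch: when swaps cannot help, you must \emph{color edges and shrink the star}, which is why the outer induction is on $\|G\|$ rather than on a deficit parameter.
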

\begin{proof}
Our strategy is simply to increase $\eta_L(G)$ if we can; if we cannot, then
Hall's theorem allows us to reduce to a smaller graph.  We can view
this strategy as the following double induction. Suppose the theorem is
false and choose a counterexample $(G, L)$ minimizing $\size{G}$ and, subject to
that, maximizing $\eta_L(G)$.  
	
Let $z$ be the center of the multistar $G$. Create a bipartite graph $B$ with
parts $C$ and $D$, where $C$
is the set of colors $\alpha$ that can be used on at least one edge, and $D$ is
the set of edges $e$ with at least one color available on $e$, and a color
$\alpha$ is adjacent to an edge $e$ if $\alpha$ can be used on $e$.
Note that $|C|=\eta_L(G)$.
	
First, suppose $|C| < \size{G}$.  Since $|L(z)| \ge d_G(z) = \size{G}$, 
some color $\tau \in L(z)$ cannot be used on any edge.  Suppose some
color $\beta \in C$ can be used on at least three edges.  
Let $zw$ be some edge that can use $\beta$.
Since $G$ is not $L$-fixable, there is $X \subseteq S_{L, \tau, \beta}$ with
$w \in X$ and $|X| \le 2$ such that $G$ is not $L'$-fixable, where $L'$ is formed
from $L$ by swapping $\tau$ and $\beta$ in $L(v)$ for every $v \in X$.  Since
$\beta$ can be used on at least three edges for $L$, it can be used on at
least one edge for $L'$.  Further, $\tau$ can also be used on at least one edge
for $L'$.  Thus $\eta_{L'}(G) > \eta_L(G)$.  Since $(G,L')$ is still
superabundant, this violates maximality of $\eta_L(G)$.  Hence, 
each color $\beta \in C$ can be used on at most two edges.  So, each color in $C$
contributes at most one to $\psi_L(G)$.  

Since $|C| < \size{G} \le \psi_L(G)$,
some color $\gamma$ contributes at least 1 to $\psi_L(G)$, but is not in $C$.
More precisely, some $\gamma \not \in C$ satisfies $|G_\gamma - z| \ge 2$.  Since $G$
is not $L$-fixable, there is $X \subseteq S_{L, \tau, \gamma}$ with $z \in X$ and 
$|X| \le 2$ such that $G$ is not $L'$-fixable where $L'$ is formed from $L$
by swapping $\tau$ and $\gamma$ in $L(v)$ for every $v \in X$.  Since
$\nu(G_{L, \tau}) = 0$ and $\nu(G_{L, \gamma}) = 0$ and $\nu(G_{L', \gamma}) =
1$, we have $\eta_{L'}(G) > \eta_L(G)$. Since $(G,L')$ is still superabundant,
this violates maximality of $\eta_L(G)$.  

Hence, we must have $|C| \ge \size{G}$.  In particular, $\card{N_B(C)} \le |C|$
so we may choose a set of colors $C' \subseteq C$ such that $C'$ is a minimal
nonempty set satisfying $\card{N_B(C')} \le \card{C'}$. 
If $|C'|\ge |N_B(C')|+1$, then, for any $\rho\in C'$, we have
$\card{C'-\rho}=\card{C'}-1\ge\card{N_B(C')}\ge\card{N_B(C'-\rho)}$, which
contradicts the minimality of $C'$.  Thus, $\card{C'}=\card{N_B(C')}$.
Furthermore, by minimality of $C'$, every nonempty $C''\subsetneq C'$ satisfies
$\card{N_B(C'')}>\card{C''}$, so Hall's Theorem yields a perfect matching $M$
between $C'$ and $N_B(C')$.

For each color/edge pair
$\set{\alpha, zw} \in M$, use color $\alpha$ on edge $zw$.  Form $G'$ from $G$
by removing all the colored edges and then discarding any isolated vertices.
Note that $z$ lost exactly $\card{C'}$ colors from its list and also
$d_{G'}(z)=d_G(z)-\card{C'}$, so $\card{L'(z)}=\card{L(z)}-\card{C'}\ge
d_G(z)-\card{C'}=d_{G'}(z)$.  Each other vertex $w\in V(G')$ satisifes
$d_{G'}(w)=d_G(w)$ and $\card{L'(w)}=\card{L(w)}$, so $\card{L'(w)}\ge
d_{G'}(w)$.
Since $G$ is not $L$-fixable and $C'$ and $\pot(L')$ are disjoint it must be
that $G'$ is not $L'$-fixable.  
For each $H \subseteq G'$, we have $\psi_{L'}(H) = \psi_{L}(H)$.
For each color $\alpha\in C$, if $\alpha\in C'$, then
$\floor{\card{H_{L,\alpha}}/2}=0$, since $E(H)\cap N_B(C')=\emptyset$.
Similarly, if $\alpha\notin C'$, then each $v\in V(G')$ satisfies $\alpha\in
L'(v)$ if and only if $\alpha\in L(v)$.
Thus, $H$ is abundant for $L'$ precisely because $H$ is abundant for $L$.
But $\size{G'} < \size{G}$, so by minimality of $\size{G}$, $G'$ is
$L'$-fixable, a contradiction.
\end{proof}

As shown in \cite{HallGame}, 
a direct consequence of Theorem \ref{FixabilityOfStars} is the {fan equation}. 
This, in turn, implies most classical edge-coloring results including Vizing's
Adjacency Lemma.

\subsection{Kierstead-Tashkinov-Vizing assignments}
Many edge-coloring results have been proved using a specific kind of
superabundant pair $(G, L)$ where superabundance can be proved via a special
ordering. That is, the orderings given by the definition of Vizing fans,
Kierstead paths, and Tashkinov trees (these structures are all standard tools in
edge-coloring; defenitions and more background are available
in~\cite{stiebitz2012graph}).
In this section, we show how superabundance follows easily from these orderings.
For each vertex $v$, we write $E(v)$ for the set of edges incident to $v$.

A list assignment $L$ on $G$ is a \emph{Kierstead-Tashkinov-Vizing} assignment
(henceforth \emph{KTV-assignment}) if for some edge $xy \in E(G)$, there is a
total ordering `$<$' of $V(G)$ such that

\begin{enumerate}
\item there is an edge-coloring $\pi$ of $G-xy$ such that $\pi(uv) \in L(u)
\cap L(v)$ for each edge $uv \in E(G - xy)$; 
\item $x < z$ for all $z \in V(G - x)$; 
\item $G\brackets{w \mid w \le z}$ is connected for all $z \in V(G)$; 
\item for each edge $wz \in E(G - xy)$, there is a vertex $u < \max\set{w, z}$ such that
$\pi(wz) \in L(u) - \setbs{\pi(e)}{e \in E(u)}$;
\item there are distinct vertices $s, t \in V(G)$ with $L(s) \cap L(t) -
\setbs{\pi(e)}{e \in E(s) \cup E(t)} \ne \emptyset$.
\end{enumerate}

\begin{lem}\label{KTVImpliesSuperabundant}
If $L$ is a KTV-assignment on $G$, then $(G, L)$ is superabundant.
\end{lem}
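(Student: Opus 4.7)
My plan is to decompose $\psi_L(H)$ into a main contribution from the partial coloring $\pi$ of~(1) and a reserve of ``extras'' harvested from~(4) and~(5), with the extras supplying the slack needed to cover the one uncolored edge~$xy$.

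Fix $H\subseteq G$. By~(1), $\pi$ restricts to a proper $L$-edge-coloring of $H-xy$. For each color $\alpha\in\pot(L)$ let $m_\alpha$ denote the number of edges of $H-xy$ colored $\alpha$; the $2m_\alpha$ endpoints of these matching edges all carry $\alpha$ in their lists, so $\card{H_{L,\alpha}}\ge 2m_\alpha$ and
\[
\psi_L(H)=\sum_\alpha\floor{\card{H_{L,\alpha}}/2}\ge \sum_\alpha m_\alpha=\size{H}-[xy\in E(H)].
\]
This already gives $\psi_L(H)\ge\size{H}$ when $xy\notin E(H)$.

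When $xy\in E(H)$, I need one additional unit of slack; equivalently, I must exhibit a color $\alpha$ for which two vertices of $V(H)$ carry $\alpha$ in their lists without being endpoints of any $\alpha$-matched edge. The pair $(s,t,\alpha^*)$ from~(5) is the natural candidate: $\alpha^*$ is missing at both $s$ and $t$, so neither $s$ nor $t$ is an $\alpha^*$-matched endpoint, and as soon as $s,t\in V(H)$ they supply the two extras, giving $\card{H_{L,\alpha^*}}\ge 2m_{\alpha^*}+2$ and closing the gap.

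The main obstacle is the case when at least one of $s,t$ lies outside $V(H)$. I plan to handle it with condition~(4): for every edge $e=wz\in E(H)-xy$ the witness $u(e)$ is strictly earlier than both $w$ and $z$ in the KTV order (the ``$u<\max\{w,z\}$'' clause, combined with $\pi(e)$ being missing at $u(e)$ while appearing on the edge at $\min\{w,z\}$, forces $u(e)<\min\{w,z\}$), and $u(e)$ supplies an extra for the color $\pi(e)$. Combining these witnesses with the $(s,t)$ from~(5) via the prefix-connectedness~(3), the plan is to show that either some $u(e)\in V(H)$ sits together with one of $s,t$ to form two extras for a common color, or an iterated application of~(4) along the KTV prefix forces such a pair to exist in $V(H)$. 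Ruling out the configuration in which every $u(e)$ together with both of $s$ and $t$ simultaneously avoids $V(H)$ is the technical heart of the argument; a downward induction on the KTV position of the last vertex of $V(H)$, using~(3) to step inside the prefix at each stage, appears to be the cleanest route.
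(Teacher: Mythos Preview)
Your opening --- bounding $\psi_L(H)$ from below by $\size{H}-[xy\in E(H)]$ via the partial coloring $\pi$, and then gaining the missing unit from $(s,t,\alpha^*)$ when $s,t\in V(H)$ --- is correct and matches the paper exactly. The gap is in your plan for the remaining case.

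First, a side error: condition~(4) only gives $u(e)<\max\{w,z\}$. Your observation that $u(e)\neq w,z$ is right, but this does not force $u(e)<\min\{w,z\}$; the witness $u(e)$ may sit strictly between $\min\{w,z\}$ and $\max\{w,z\}$ in the KTV order.

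More importantly, your plan applies~(4) to edges $e\in E(H)-xy$. For such an $e$ the witness $u(e)$ contributes at most \emph{one} extra vertex for the color $\pi(e)$ (and you have no guarantee that $u(e)\in V(H)$). A single extra does nothing to $\lfloor\card{H_{L,\alpha}}/2\rfloor$, and your hoped-for pairing with $s$ or $t$ would require the colors to match, which nothing in the setup arranges. The ``downward induction on the KTV position of the last vertex of $V(H)$'' is not an induction on a quantity that decreases while the target $H$ stays fixed, so as stated it does not yield a proof.

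The paper's move is the one you were circling but never landed on: once not both of $s,t$ lie in $V(H)$, some vertex is missing from $H$; take the \emph{smallest} such vertex $z$. By~(3) the prefix $\{v:v\le z\}$ is connected, so $z$ has a neighbour $w<z$; minimality of $z$ forces $w\in V(H)$, and likewise the witness $u<z$ from~(4) for the edge $wz$ lies in $V(H)$. Now $wz\notin E(H)$, so in $H$ the vertex $w$ is \emph{not} a matched endpoint for $\alpha=\pi(wz)$ (the only $\alpha$-edge at $w$ in $G$ is $wz$), while $u$ is not an $\alpha$-matched endpoint anywhere, and $u\neq w$. These two vertices give $\card{H_{L,\alpha}}\ge 2m_\alpha+2$ and the proof is finished. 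The key idea you were missing is to apply~(4) to an edge \emph{leaving} $H$, not to an edge inside it.
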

\begin{proof}
Let $L$ be a KTV-assignment on $G$, and let $H \subseteq G$.  We will show that
$(H,L)$ is abundant.  
Clearly it suffices to consider the case when $H$ is an induced subgraph, so we
assume this.
Property (1) gives that $G-xy$ has an edge-coloring
$\pi$, so $\psi_L(H)\ge \size{H}-1$; also $\psi_L(H)\ge \size{H}$ if
$\{x,y\}\not\subseteq V(H)$.  Furthermore $\psi_L(H)\ge \size{H}$ if $s$ and
$t$ from property (5) are both in $V(H)$, since then $\psi_L(H)$ gains 1 over
the naive lower bound, due to the color in $L(s)\cap L(t)$.  So $V(G)-
V(H)\ne \emptyset$.

Now choose a vertex $z \in V(G) - V(H)$ that is smallest under $<$.  
Let $H' = G\brackets{w \mid w \le z}$.  By the minimality of $z$, we have $H' -
z \subseteq H$. By property (2), $\card{H'} \ge 2$.  By property (3), $H'$ is
connected and thus there is $w \in V(H' - z)$ adjacent to $z$. So, we have $w <
z$ and $wz\in E(G)-E(H)$.  
Property (4) implies that there exists a vertex $u$ with $u <
\max\set{w, z} = z$ and $\pi(wz) \in L(u)-\{\pi(e)|e\in E(u)\}$.  Since $u \in
V(H' - z) \subseteq V(H)$, we again gain 1 over the naive lower bound on
$\psi_L(H)$, due to the color in $L(u)\cap L(w)$.  So $\psi_L(H)\ge \size{H}$.
\end{proof}

\subsection{The gap between fixability and reducibility}
By abstracting away the containing graph, we may have lost some power in proving
reducibility results. Surely we have when we only care about a certain class of
graphs. For example, with planar graphs, not all Kempe path pairings are
possible (if we add an edge for each pair, the resulting graph must be
planar).  But, possibly there are graphs that are reducible for all containing
graphs but are not fixable.  We could strengthen ``fixable'' in various ways,
but we have not found the need to do so.  One particular strengthening 
deserves mention, since it makes fixability more induction friendly.  

\begin{defn}
$G$ is \emph{$(L, P)$-subfixable} if either
\begin{enumerate}
\item[(1)] $G$ is $(L, P)$-fixable; or
\item[(2)] there is $xy \in E(G)$ and $\tau \in L(x) \cap L(y)$ such that
$G-xy$ is $L'$-subfixable, where $L'$ is formed from $L$ by removing $\tau$ from
$L(x)$ and $L(y)$.
\end{enumerate}
\end{defn}

Superabundance is a necessary condition for subfixability because coloring an
edge cannot make a non-abundant subgraph abundant.  The conjectures in the rest
of this paper may be easier to prove with subfixable in place of fixable.  That would
really be just as good since it would give the exact same results for edge coloring.

\section{Applications of small k-fixable graphs}
In this section, we use $k$-fixable graphs to prove a few conjectures about
3-critical and 4-critical graphs.
A \emph{$k$-vertex} is a vertex of degree $k$, and a \emph{$k$-neighbor} of a
vertex $v$ is a $k$-vertex adjacent to $v$.

\subsection{The conjecture of Hilton and Zhao for $\Delta=4$}
For a graph $G$, let $G_\Delta$ be the subgraph of $G$ induced by vertices of
degree $\Delta(G)$.  Vizing's Adjacency Lemma implies that $\delta(G_\Delta) \ge
2$ in a critical graph $G$.  A natural question is whether or not this is 
best possible.  For example, can we have $\Delta(G_\Delta) = 2$ in a
critical graph $G$?  In fact, Hilton and Zhao have conjectured exactly when
this can happen.  
Recall that a graph $G$ is \emph{class 1} if $\chi'(G)=\Delta$ and \emph{class
2} otherwise.
A graph $G$ is \emph{overfull} if $||G|| >
\floor{\frac{|G|}{2}}\Delta(G)$.  (The significance of overfull graphs is that
they must be class 2, simply because they have more edges than can be colored by
$\Delta(G)$ matchings, each of size $\floor{\frac{|G|}2}$.)
Let $P^*$ denote the Peterson graph with one
vertex deleted (see Figure \ref{fig:petey}).

\begin{conjecture}[Hilton and Zhao]
A connected graph $G$ with $\Delta(G_\Delta) \le 2$ is class 2 if and only if
$G$ is $P^*$ or $G$ is overfull.
\end{conjecture}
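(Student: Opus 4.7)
The plan is to prove the nontrivial forward direction: if $G$ is connected, class 2, $G \ne P^*$, and $\Delta(G_\Delta) \le 2$, then $G$ is overfull. The reverse direction is standard---an overfull graph has more edges than $\Delta$ matchings of size $\lfloor |G|/2 \rfloor$ can cover, and $P^*$ is well known to be class 2.

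Take a counterexample $G$ minimizing $|V(G)|+|E(G)|$; by standard arguments $G$ may be assumed $\Delta$-critical. Apply Vizing's Adjacency Lemma: for $xy \in E(G)$ the vertex $x$ has at least $\max\{2,\Delta-d(y)+1\}$ neighbors of degree $\Delta$. Combined with $\Delta(G_\Delta) \le 2$, this forces every $\Delta$-vertex to have exactly two $\Delta$-neighbors---so $G_\Delta$ is a disjoint union of cycles---and forces every small neighbor of a $\Delta$-vertex to have degree exactly $\Delta-1$. Hence $G$ has a rigid ``skeleton plus hair'' structure: cycles $C_1,\ldots,C_r$ of $\Delta$-vertices joined through shared or private $(\Delta-1)$-vertices, and every edge of $G$ lies in this picture.

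Next I would attack the problem with $(f,\Delta)$-fixable subgraphs. For each $\Delta$, build a catalog of $(f,\Delta)$-fixable configurations designed for this structure: short arcs along a cycle of $G_\Delta$ with their attached $(\Delta-1)$-vertices, pairs or triples of consecutive cycle vertices sharing neighbors, and scaled-up analogues of the graphs in Figure \ref{fig:small3}. Each such configuration is then forbidden in $G$ by Lemma \ref{FixableCompletesColoring} and the observation following it. Fixability of the candidates would ideally be verified via Theorem \ref{FixabilityOfStars} (superabundance implies fixability for multistars) applied repeatedly at the small centers, or via Lemma \ref{KTVImpliesSuperabundant} when the configuration admits a KTV-assignment. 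Then set up discharging with initial charge $d(v)-\Delta$, so the total charge is $2\|G\|-\Delta|G|\le 0$ by non-overfullness; design rules that move excess charge from $(\Delta-1)$-vertices onto their $\Delta$-neighbors, aiming at nonnegative final charge everywhere. The forbidden configurations eliminate the bad local patterns that would otherwise leave some $\Delta$-vertex deficient, and equality in the resulting chain of inequalities then forces overfullness, a contradiction.

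The main obstacle is uniformity in $\Delta$. For each fixed $\Delta$ the scheme is executable in principle---indeed the paper carries out exactly this strategy for $\Delta=4$ in the present subsection---but extending to arbitrary $\Delta$ requires either an infinite family of uniformly $(f,\Delta)$-fixable configurations or a new general reducibility theorem beyond the current framework, so that the same discharging rules work for all $\Delta$. A promising intermediate step, in the spirit of this paper, would be to prove that every abundant list assignment on a ``cycle of $\Delta$-vertices with attached $K_{1,\Delta-3}$'s''---exactly the local structure forced above---is fixable; that would collapse the problem to a single uniform discharging calculation and potentially settle the conjecture in full, with $P^*$ surviving only because $\Delta(P^*)=3$ places it outside the range where the attached-star fixability gains traction.
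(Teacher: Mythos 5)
There is a genuine gap: what you have written is a research program, not a proof, and the statement it targets is in fact left open in the paper as well --- the authors prove only the $\Delta=4$ case (and cite Cariolaro and Cariolaro for $\Delta=3$), via Lemma~\ref{HiltonZhaoLemma} together with computer-verified reducibility of the three configurations in Figure~\ref{fig:hiltonzhao}. Your opening reduction is fine (VAL plus $\Delta(G_\Delta)\le 2$ does force every vertex of a critical counterexample to have degree $\Delta$ or $\Delta-1$ and forces $G_\Delta$ to be $2$-regular), but the core of your argument --- ``build a catalog of $(f,\Delta)$-fixable configurations, uniform in $\Delta$, and verify their fixability via Theorem~\ref{FixabilityOfStars} or Lemma~\ref{KTVImpliesSuperabundant}'' --- cannot be carried out with the tools you invoke. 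Lemma~\ref{KTVImpliesSuperabundant} goes the wrong way: it shows KTV-assignments are superabundant, and superabundance is only a \emph{necessary} condition for fixability (Lemma~\ref{SuperabundanceIsNecessary}); it never certifies that a configuration is fixable. Theorem~\ref{FixabilityOfStars} applies only to multistars, while the configurations your structure demands (arcs of a cycle of $\Delta$-vertices with attached $(\Delta-1)$-vertices) contain paths and cycles; whether superabundance suffices even for trees is exactly Conjecture~\ref{OneHighConjecture}, open in the paper, and for graphs containing cycles the analogous statement is false (Figure~\ref{fig:goldbergce}). This is precisely why the paper's $\Delta=4$ reducibility proofs are long computer-generated case analyses rather than applications of the star or KTV machinery.

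Beyond that missing engine, two further steps are asserted rather than proved. First, your discharging scheme and the concluding claim that ``equality in the resulting chain of inequalities forces overfullness'' are not specified; nothing shows that the (unconstructed) reducible configurations eliminate every deficient local pattern for arbitrary $\Delta$, and you yourself flag that uniformity in $\Delta$ is unresolved --- which is the whole difficulty, since for each fixed $\Delta$ the conjecture reduces to a finite (computer-assisted) verification of the type the paper performs for $\Delta=4$. Second, the passage to a $\Delta$-critical subgraph needs care: proving the critical subgraph $H$ is overfull (or has a specific structure) does not by itself say anything about $G$; in the paper's $\Delta=4$ theorem this is handled by pinning down $H=K_5-e$ and then using connectivity and $\Delta(G_\Delta)\le 2$ to conclude $G=H$, and an analogous final step is absent from your outline. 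In short, the proposal correctly identifies the structural setup and the style of attack used in the paper, but it neither supplies the reducible configurations nor a way to certify them, so it does not prove the conjecture --- which, for $\Delta\ge 5$, remains open here.
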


\tikzstyle{majorStyle}=[shape = circle, minimum size = 6pt, inner sep = 2.2pt, draw]
\tikzstyle{major}=[shape = circle, minimum size = 6pt, inner sep = 2.2pt, draw]
\tikzstyle{minorStyle}=[shape = rectangle, minimum size = 6pt, inner sep = 2.2pt, draw]
\tikzstyle{minor}=[shape = rectangle, minimum size = 6pt, inner sep = 2.2pt, draw]
\tikzstyle{labeledStyle}=[shape = rectangle, minimum size = 6pt, inner sep = 2.2pt, draw]
\tikzstyle{VertexStyle} = []
\tikzstyle{EdgeStyle} = []
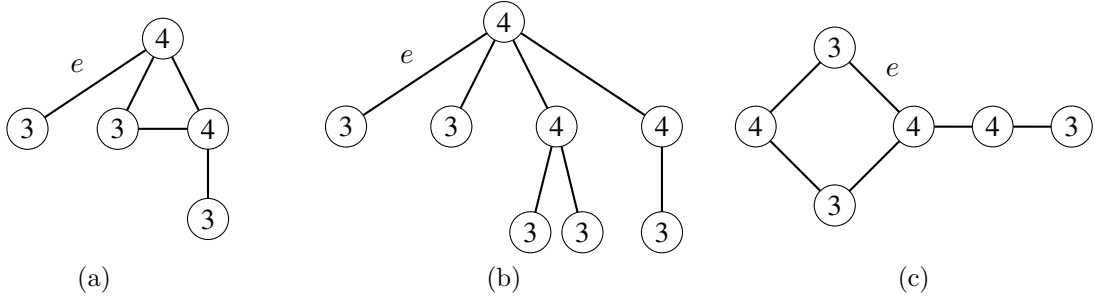
\begin{figure}[htb]
\renewcommand{\ttdefault}{ptm}
\begin{center}
\subfloat[]{\makebox[.33\textwidth]{
\begin{tikzpicture}[scale = 6]
\Vertex[style = major, x = 0.45, y = 0.95, L = \small {\texttt{4}}]{v0}
\Vertex[style = major, x = 0.15, y = 0.75, L = \small {\texttt{3}}]{v1}
\Vertex[style = major, x = 0.35, y = 0.75, L = \small {\texttt{3}}]{v2}
\Vertex[style = major, x = 0.55, y = 0.75, L = \small {\texttt{4}}]{v3}
\Vertex[style = major, x = 0.55, y = 0.55, L = \small {\texttt{3}}]{v4}
\Edge[label= \small {$e$}, , labelstyle={auto=right, fill=none}](v0)(v1)
\Edge[](v2)(v0)
\Edge[](v2)(v3)
\Edge[](v3)(v0)
\Edge[](v4)(v3)
\draw[white] (0,.475)--(0,0.49);
\end{tikzpicture}
}}
\subfloat[]{\makebox[.33\textwidth]{
\begin{tikzpicture}[scale = 7]
\Vertex[style = major, x = 0.5, y = 0.849, L = \small {\texttt{4}}]{v0}
\Vertex[style = major, x = 0.200, y = 0.650, L = \small {\texttt{3}}]{v1}
\Vertex[style = major, x = 0.400, y = 0.650, L = \small {\texttt{3}}]{v2}
\Vertex[style = major, x = 0.600, y = 0.650, L = \small {\texttt{4}}]{v3}
\Vertex[style = major, x = 0.800, y = 0.650, L = \small {\texttt{4}}]{v4}
\Vertex[style = major, x = 0.550, y = 0.449, L = \small {\texttt{3}}]{v6}
\Vertex[style = major, x = 0.649, y = 0.449, L = \small {\texttt{3}}]{v5}
\Vertex[style = major, x = 0.800, y = 0.449, L = \small {\texttt{3}}]{v7}
\Edge[label= \small {$e$}, , labelstyle={auto=right, fill=none}](v0)(v1)
\Edge[](v2)(v0)
\Edge[](v3)(v0)
\Edge[](v4)(v0)
\Edge[](v5)(v3)
\Edge[](v6)(v3)
\Edge[](v7)(v4)
\end{tikzpicture}
}}
%
\subfloat[]{\makebox[.33\textwidth]{
\begin{tikzpicture}[rotate=90,scale=7]
\Vertex[style = major, x = 0.35, y = 0.80, L = \small {\texttt{3}}]{v0}
\Vertex[style = major, x = 0.65, y = 0.80, L = \small {\texttt{3}}]{v1}
\Vertex[style = major, x = 0.50, y = 0.65, L = \small {\texttt{4}}]{v2}
\Vertex[style = major, x = 0.50, y = 0.95, L = \small {\texttt{4}}]{v3}
\Vertex[style = major, x = 0.50, y = 0.50, L = \small {\texttt{4}}]{v4}
\Vertex[style = major, x = 0.50, y = 0.35, L = \small {\texttt{3}}]{v5}
\Edge[](v2)(v0)
\Edge[label= \small {$e$}, , labelstyle={auto=right, fill=none}](v2)(v1)
\Edge[](v3)(v0)
\Edge[](v3)(v1)
\Edge[](v4)(v2)
\Edge[](v5)(v4)
\draw[white] (.26,.55)--(.27,.55);
\end{tikzpicture}
}}
\end{center}
\caption{Each configuration is reducible by deleting edge $e$.
\label{fig:hiltonzhao}}
\end{figure}

David and Gianfranco Cariolaro~\cite{cariolaro2003colouring} proved this
conjecture when $\Delta=3$.  Here we prove it when $\Delta=4$, but we
omit the very long computer-generated proofs of the reducibility of the
graphs in Figure~\ref{fig:hiltonzhao}.  
%
Since we do not include the reducibility proofs, we separate the proof into two
parts.  The first does not use the computer at all.
Let $\H_4$ be the class of connected graphs with maximum degree 4, minimum
degree 3, each vertex adjacent to at least two 4-vertices, and each 4-vertex
adjacent to exactly two 4-vertices.
\begin{lem}\label{HiltonZhaoLemma}
If $G$ is a graph in $\H_4$ and $G$ contains none of the three configurations in
Figure~\ref{fig:hiltonzhao} (not necessarily induced), then $G$ is $K_5-e$.
\end{lem}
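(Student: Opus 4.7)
The plan is to use the three forbidden configurations to rigidly constrain the structure of $G_\Delta$, the subgraph of $G$ induced by the 4-vertices. Since every 4-vertex has exactly two 4-neighbors, $G_\Delta$ is $2$-regular, hence a disjoint union of cycles; every 4-vertex also has exactly two 3-neighbors. For any edge $vu$ of $G_\Delta$, configuration~(a) appears whenever $v$ and $u$ share exactly one 3-neighbor (the unique other 3-neighbors of $v$ and of $u$ then play the roles of $v_1$ and $v_4$). So avoiding (a) forces a dichotomy on each edge of $G_\Delta$: its two endpoints share either no 3-neighbor (Case A) or both 3-neighbors (Case B).

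The main step is to rule out Case A. Suppose $vu$ is in Case A with 3-neighbor sets $\{w_1,w_2\}$ and $\{w_3,w_4\}$ (four distinct 3-vertices), and let $u'$ be the other 4-neighbor of $v$. Taking $v_0=v$, $v_3=u$, $v_4=u'$ in configuration~(b) shows that (b) appears in $G$ unless every 3-neighbor of $u'$ already lies in $\{w_1,w_2,w_3,w_4\}$. Combined with the Case~A/B dichotomy applied to the edge $vu'$, the 3-neighbor set of $u'$ must equal either $\{w_1,w_2\}$ (so $u'$ shares both 3-neighbors with $v$) or $\{w_3,w_4\}$ (so $u'$ shares both with $u$). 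In the first subcase, configuration~(c) is realized by $v_2=v$, $v_3=u'$, $v_0=w_1$, $v_1=w_2$, $v_4=u$, $v_5=w_3$; in the second, by symmetry, by $v_2=u$, $v_3=u'$, $v_0=w_3$, $v_1=w_4$, $v_4=v$, $v_5=w_1$. Either realization contradicts our hypothesis, so no edge of $G_\Delta$ is in Case A.

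Hence every edge of $G_\Delta$ lies in Case B. Following a cycle of $G_\Delta$, the pair of shared 3-neighbors propagates from edge to edge, so every 4-vertex on a given cycle of $G_\Delta$ is adjacent to the same pair $\{w_1,w_2\}$ of 3-vertices. Since $\deg_G(w_i)=3$ and $G$ is simple, this cycle has length exactly three; the three cycle vertices together with $w_1$ and $w_2$ induce $K_5-e$. All five of these vertices have their $G$-degrees already saturated within the set, so connectedness of $G$ forces $G=K_5-e$. The main obstacle is the middle paragraph: coordinating configurations (a), (b), and (c) to eliminate Case A, where the delicate ingredient is spotting, in each residual subcase, a copy of (c) on the correctly chosen six vertices.
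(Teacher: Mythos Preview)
Your proof is correct. It uses the same three configurations in essentially the same logical roles as the paper's proof: configuration (a) forces the 0-or-2 dichotomy on shared 3-neighbors across each edge of $G_\Delta$, and configurations (b) and (c) eliminate the ``share none'' case.

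The organization differs in a useful way. The paper picks a single 4-vertex $u$, first handles the case where some 3-neighbor and some 4-neighbor of $u$ are adjacent (reaching $K_5-e$ via two applications of (c)), then in the remaining case gets a contradiction via (b) and (c). You instead work globally: you observe that $G_\Delta$ is 2-regular, phrase the dichotomy as a property of each edge of $G_\Delta$, kill Case~A once and for all, and then in Case~B use the degree constraint $d_G(w_i)=3$ to force the cycle in $G_\Delta$ to have length exactly three. This endgame is cleaner than the paper's, which needs configuration~(c) again; your framing also makes transparent why there is only one component. The trade-off is that your Case~A argument has to coordinate the dichotomy on the neighboring edge $vu'$ with the conclusion from (b), whereas the paper's local argument keeps that step implicit. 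Both arguments are short; yours is slightly more structural.
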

\begin{proof}
Let $G$ be a graph in $\H_4$.  Note that every 4-vertex in $G$ has exactly two
3-neighbors and two 4-neighbors.  Let $u$ denote a 4-vertex and let
$v_1,\ldots,v_4$ denote its neighbors, where $d(v_1)=d(v_2)=3$ and $d(v_3)=d(v_4)=4$.
When vertices $x$ and $y$ are adjacent, we write $x\adj y$.  We assume that $G$
contains none of the configurations in Figure~\ref{fig:hiltonzhao} and show that
$G$ must be $K_5-e$.  
	
First suppose that $u$ has a 3-neighbor and a 4-neighbor that are adjacent.  By
symmetry, assume that $v_2\adj v_3$.  Since
Figure~\ref{fig:hiltonzhao}(a) is forbidden, we have $v_3\adj v_1$. 
Now consider $v_4$.  If $v_4$ has a 3-neighbor distinct from $v_1$ and $v_2$,
then we have a copy of Figure~\ref{fig:hiltonzhao}(c).  Hence $v_4\adj v_1$ and
$v_4\adj v_2$.  If $v_3\adj v_4$, then $G$ is $K_5-e$.  Suppose not, and let
$x$ be a 4-neighbor of $v_4$.  Since $G$ has no copy of
Figure~\ref{fig:hiltonzhao}(c), $x$ must be adjacent to $v_1$ and $v_2$.  This
is a contradiction, since $v_1$ and $v_2$ are 3-vertices, but now each has at
least four neighbors.  Hence, we conclude that each of $v_1$ and $v_2$ is
non-adjacent to each of $v_3$ and $v_4$.
	
Now consider the 3-neighbors of $v_3$ and $v_4$.  If $v_3$ and $v_4$ have zero 
or one 3-neighbors
in common, then we have a copy of Figure~\ref{fig:hiltonzhao}(b).  
Otherwise they have two 3-neighbors in common,
so we have a copy of Figure~\ref{fig:hiltonzhao}(c).  
\end{proof}

Since $K_5 - e$ is overfull, the next theorem implies Hilton and Zhao's conjecture
for $\Delta=4$.  

\begin{thm}
A connected graph $G$ with $\Delta(G) = 4$ and $\Delta(G_\Delta) \le 2$ is
class 2 if and only if $G$ is $K_5-e$.
\end{thm}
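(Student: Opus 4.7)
The ``if'' direction is a quick overfullness check: $K_5 - e$ has $9 > 8 = \lfloor 5/2 \rfloor \cdot 4$ edges, so it is overfull, hence class 2. For the ``only if'' direction, my plan is to choose a connected $4$-edge-critical subgraph $G' \subseteq G$, verify that $G' \in \H_4$, invoke Lemma \ref{HiltonZhaoLemma} together with the (assumed) reducibility of the three configurations in Figure \ref{fig:hiltonzhao} to conclude $G' = K_5 - e$, and finally show that $G$ cannot properly contain $G'$.

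The membership $G' \in \H_4$ will come almost entirely from Vizing's Adjacency Lemma (VAL). Connectedness and $\Delta(G') = 4$ follow from criticality and $\chi'(G') = \chi'(G) = 5$, and VAL applied to any edge gives that every vertex of $G'$ has at least two $4$-neighbors in $G'$. Since a $4$-vertex of $G'$ remains a $4$-vertex of $G$ (and a $4$-neighbor in $G'$ is a $4$-neighbor in $G$), the hypothesis $\Delta(G_\Delta) \le 2$ forces each $4$-vertex of $G'$ to have \emph{exactly} two $4$-neighbors in $G'$. The one step needing a little care is $\delta(G') \ge 3$: if some $v \in V(G')$ had $d_{G'}(v) = 2$ with neighbor $x$, then VAL would force $x$ to have at least $\Delta - d(v) + 1 = 3$ $4$-neighbors in $G'$, none of them equal to $v$; thus $x$ would be a $4$-vertex of $G$ with at least three $4$-neighbors, contradicting $\Delta(G_\Delta) \le 2$. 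Once $G' \in \H_4$, criticality forbids any reducible subconfiguration, so Lemma \ref{HiltonZhaoLemma} yields $G' = K_5 - e$.

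To finish, I would show $G = G'$. Label the $4$-vertices of $G'$ as $a, b, c$ and the $3$-vertices as $u_1, u_2$. Any extra edge of $G$ among vertices of $V(G')$ must be $u_1 u_2$ (since $a, b, c$ already have degree $4$), producing $K_5$, in which every vertex has four $4$-neighbors---contradicting $\Delta(G_\Delta) \le 2$. By connectedness of $G$, any new vertex of $G$ must be joined to $V(G')$, and the only endpoints with room are $u_1$ or $u_2$; but promoting $u_1$ to degree $4$ would give it the three $4$-neighbors $a, b, c$ inside $G'$, again violating $\Delta(G_\Delta) \le 2$. Hence $G = K_5 - e$. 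The real obstacle is nothing in the argument above: it is the reducibility of the three configurations in Figure \ref{fig:hiltonzhao}, which the authors openly take on faith from a lengthy computer-generated proof.
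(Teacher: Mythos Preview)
Your proof is correct and follows essentially the same approach as the paper's: pass to a $4$-critical subgraph, use VAL together with the hypothesis $\Delta(G_\Delta)\le 2$ to place it in $\H_4$, apply Lemma~\ref{HiltonZhaoLemma} plus the (computer-verified) reducibility of the three configurations to identify it as $K_5-e$, and then argue that the $3$-vertices of $K_5-e$ cannot gain further neighbors in $G$. Your treatment of the $\delta(G')\ge 3$ step and the final $G=G'$ step is a bit more explicit than the paper's, but the logic is identical.
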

%
%
%

\begin{proof} 
Let $G$ be as stated in the theorem.
If $G$ is class $2$, then $G$ has a $4$-critical subgraph $H$.   Since
$H$ is $4$-critical, it is connected, 
and every vertex has at least two neighbors of degree $4$, by VAL.  
Further, since $\Delta(H_\Delta) \le \Delta(G_\Delta) \le 2$, VAL implies
that $H$ has minimum degree $3$. 
Thus, $H \in \H_4$.  By Lemma \ref{HiltonZhaoLemma}, 
either $H$ is $K_5-e$ or $H$ contains one of
the configurations in Figure~\ref{fig:hiltonzhao}.  By computer, each of these
configurations is reducible and hence cannot be a subgraph of the $4$-critical
graph $H$.  Thus $H$ is $K_5-e$.  Let $x_1,x_2$ be the degree $3$ vertices in
$H$.  Each $x_i$ has three degree $4$ neighbors in $H$ and hence $d_G(x_i) \le
3$ since $\Delta(G_\Delta) \le 2$.  That is, $x_i$ has no neighbors outside
$H$.  Since $G$ is connected, we must have $G = H = K_5 - e$. 
\end{proof}

\subsection{Impoved lower bounds on the average degree of 3-critical graphs and
4-critical graphs}
Let $P^*$ denote the Petersen graph with a vertex deleted (see Figure \ref{fig:petey}).
Jakobsen~\cite{Jakobsen73,Jakobsen74} noted that $P^*$ is 3-critical and has
average degree $2.\overline{6}$.  He showed that every 3-critical graph has average
degree at least $2.\overline{6}$, and asked whether equality holds only for $P^*$.
In~\cite{3criticalCR}, we answered his question affirmatively.  More precisely,
we showed that every 3-critical graph other than $P^*$ has average degree at
least $2+\frac{26}{37}=2.\overline{702}$.  The proof crucially depends on the
fact that the three leftmost configurations in Figure~\ref{tree1-pic} are
reducible for 3-edge-coloring.
As we noted in~\cite{3criticalCR}, by using the computer to prove reducibility
of additional configurations, we can slightly strengthen this result.  Specifically,
every 3-critical graph has average degree at least $2+\frac{22}{31} \approx
2.7097$ unless it is $P^*$ or one other exceptional graph, the Haj\'{o}s
join of two copies of $P^*$.  (For comparison, there exists an infinite family of
3-critical graphs with average degree less than $2.75$.)  This strengthening
relies primarily on the fact that the rightmost configuration in
Figure~\ref{fig:bigtree} is reducible, even if one or more pairs of
its 2-vertices are identified.  However, the simplest proof we have of this fact
is computer-generated and fills about 100 pages.

\begin{figure}[!htb]
\begin{center}
\begin{tikzpicture}[scale = 8, font=\sffamily]
\tikzstyle{VertexStyle} = []
\tikzstyle{EdgeStyle} = []
\tikzstyle{labeledStyle}=[shape = circle, minimum size = 6pt, inner sep = 2.2pt, draw]
\tikzstyle{unlabeledStyle}=[shape = circle, minimum size = 6pt, inner sep = 1.2pt, draw, fill]
\Vertex[style = unlabeledStyle, x = 0.45, y = 0.80, L = \tiny {}]{v0}
\Vertex[style = unlabeledStyle, x = 0.35, y = 0.75, L = \tiny {}]{v1}
\Vertex[style = unlabeledStyle, x = 0.30, y = 0.65, L = \tiny {}]{v2}
\Vertex[style = unlabeledStyle, x = 0.35, y = 0.55, L = \tiny {}]{v3}
\Vertex[style = unlabeledStyle, x = 0.45, y = 0.50, L = \tiny {}]{v4}
\Vertex[style = unlabeledStyle, x = 0.55, y = 0.55, L = \tiny {}]{v5}
\Vertex[style = unlabeledStyle, x = 0.60, y = 0.65, L = \tiny {}]{v6}
\Vertex[style = unlabeledStyle, x = 0.55, y = 0.75, L = \tiny {}]{v7}
\Vertex[style = unlabeledStyle, x = 0.45, y = 0.95, L = \tiny {}]{v8}
\Edge[label = \tiny {}, labelstyle={auto=right, fill=none}](v1)(v2)
\Edge[label = \tiny {}, labelstyle={auto=right, fill=none}](v1)(v0)
\Edge[label = \tiny {}, labelstyle={auto=right, fill=none}](v7)(v0)
\Edge[label = \tiny {}, labelstyle={auto=right, fill=none}](v7)(v6)
\Edge[label = \tiny {}, labelstyle={auto=right, fill=none}](v7)(v3)
\Edge[label = \tiny {}, labelstyle={auto=right, fill=none}](v5)(v6)
\Edge[label = \tiny {}, labelstyle={auto=right, fill=none}](v5)(v4)
\Edge[label = \tiny {}, labelstyle={auto=right, fill=none}](v5)(v1)
\Edge[label = \tiny {}, labelstyle={auto=right, fill=none}](v3)(v4)
\Edge[label = \tiny {}, labelstyle={auto=right, fill=none}](v3)(v2)
\path [line width=1pt]
(v2) edge [bend left=35] (v8)
(v8) edge [bend left=35] (v6);
\end{tikzpicture}
\end{center}
\caption{The Peterson graph with one vertex removed.}
\label{fig:petey}
\end{figure}
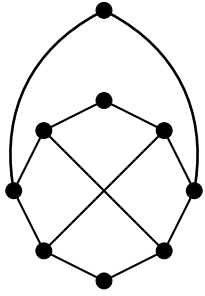
\bigskip

\begin{figure}[!htb]
\renewcommand{\ttdefault}{ptm}
\begin{center}
\begin{tikzpicture}[scale = 9, font=\sffamily]
\tikzstyle{VertexStyle} = []
\tikzstyle{EdgeStyle} = []
\tikzstyle{labeledStyle}=[shape = circle, minimum size = 6pt, inner sep = 2.2pt, draw]
\tikzstyle{unlabeledStyle}=[shape = circle, minimum size = 6pt, inner sep = 1.2pt, draw, fill]
\Vertex[style = labeledStyle, x = 0.45, y = 0.75, L = \small {\texttt{2}}]{v0}
\Vertex[style = labeledStyle, x = 0.45, y = 0.65, L = \small {\texttt{3}}]{v1}
\Vertex[style = labeledStyle, x = 0.55, y = 0.65, L = \small {\texttt{3}}]{v2}
\Vertex[style = labeledStyle, x = 0.55, y = 0.75, L = \small {\texttt{2}}]{v3}
\Vertex[style = labeledStyle, x = 0.65, y = 0.75, L = \small {\texttt{2}}]{v4}
\Vertex[style = labeledStyle, x = 0.65, y = 0.65, L = \small {\texttt{3}}]{v5}
\Vertex[style = labeledStyle, x = 0.75, y = 0.65, L = \small {\texttt{3}}]{v6}
\Vertex[style = labeledStyle, x = 0.75, y = 0.75, L = \small {\texttt{2}}]{v7}
\Edge[](v6)(v7)
\Edge[](v6)(v5)
\Edge[](v5)(v4)
\Edge[](v5)(v2)
\Edge[](v2)(v3)
\Edge[](v2)(v1)
\Edge[](v1)(v0)
\end{tikzpicture}
~~~
\begin{tikzpicture}[scale = 9, font=\sffamily]
\tikzstyle{VertexStyle} = []
\tikzstyle{EdgeStyle} = []
\tikzstyle{labeledStyle}=[shape = circle, minimum size = 6pt, inner sep = 2.2pt, draw]
\tikzstyle{unlabeledStyle}=[shape = circle, minimum size = 6pt, inner sep = 1.2pt, draw, fill]
\Vertex[style = labeledStyle, x = 0.60, y = 0.80, L = \small {\texttt{2}}]{v0}
\Vertex[style = labeledStyle, x = 0.50, y = 0.70, L = \small {\texttt{3}}]{v1}
\Vertex[style = labeledStyle, x = 0.60, y = 0.70, L = \small {\texttt{3}}]{v2}
\Vertex[style = labeledStyle, x = 0.70, y = 0.70, L = \small {\texttt{3}}]{v3}
\Vertex[style = labeledStyle, x = 0.60, y = 0.60, L = \small {\texttt{2}}]{v4}
\Edge[](v3)(v2)
\Edge[](v3)(v0)
\Edge[](v1)(v0)
\Edge[](v1)(v2)
\Edge[](v2)(v4)
\end{tikzpicture}
~~~
\begin{tikzpicture}[scale = 9, font=\sffamily]
\tikzstyle{VertexStyle} = []
\tikzstyle{EdgeStyle} = []
\tikzstyle{labeledStyle}=[shape = circle, minimum size = 6pt, inner sep = 2.2pt, draw]
\tikzstyle{unlabeledStyle}=[shape = circle, minimum size = 6pt, inner sep = 1.2pt, draw, fill]
\Vertex[style = labeledStyle, x = 0.60, y = 0.80, L = \small {\texttt{3}}]{v0}
\Vertex[style = labeledStyle, x = 0.50, y = 0.70, L = \small {\texttt{3}}]{v1}
\Vertex[style = labeledStyle, x = 0.60, y = 0.70, L = \small {\texttt{3}}]{v2}
\Vertex[style = labeledStyle, x = 0.70, y = 0.70, L = \small {\texttt{3}}]{v3}
\Vertex[style = labeledStyle, x = 0.70, y = 0.60, L = \small {\texttt{2}}]{v4}
\Vertex[style = labeledStyle, x = 0.60, y = 0.60, L = \small {\texttt{2}}]{v5}
\Vertex[style = labeledStyle, x = 0.50, y = 0.60, L = \small {\texttt{2}}]{v6}
\Edge[](v3)(v4)
\Edge[](v3)(v2)
\Edge[](v3)(v0)
\Edge[](v1)(v6)
\Edge[](v1)(v0)
\Edge[](v1)(v2)
\Edge[](v2)(v5)
\end{tikzpicture}
~~~
\begin{tikzpicture}[scale = 9, font=\sffamily]
\tikzstyle{VertexStyle} = []
\tikzstyle{EdgeStyle} = []
\tikzstyle{labeledStyle}=[shape = circle, minimum size = 6pt, inner sep = 2.2pt, draw]
\tikzstyle{unlabeledStyle}=[shape = circle, minimum size = 6pt, inner sep = 1.2pt, draw, fill]
\Vertex[style = labeledStyle, x = 0.70, y = 0.70, L = \small {\texttt{3}}]{v0}
\Vertex[style = labeledStyle, x = 0.80, y = 0.70, L = \small {\texttt{3}}]{v1}
\Vertex[style = labeledStyle, x = 0.90, y = 0.70, L = \small {\texttt{3}}]{v2}
\Vertex[style = labeledStyle, x = 0.80, y = 0.60, L = \small {\texttt{2}}]{v3}
\Vertex[style = labeledStyle, x = 0.70, y = 0.60, L = \small {\texttt{2}}]{v4}
\Vertex[style = labeledStyle, x = 0.90, y = 0.60, L = \small {\texttt{2}}]{v5}
\Vertex[style = labeledStyle, x = 0.30, y = 0.70, L = \small {\texttt{3}}]{v6}
\Vertex[style = labeledStyle, x = 0.40, y = 0.70, L = \small {\texttt{3}}]{v7}
\Vertex[style = labeledStyle, x = 0.50, y = 0.70, L = \small {\texttt{3}}]{v8}
\Vertex[style = labeledStyle, x = 0.30, y = 0.60, L = \small {\texttt{2}}]{v9}
\Vertex[style = labeledStyle, x = 0.50, y = 0.60, L = \small {\texttt{2}}]{v10}
\Vertex[style = labeledStyle, x = 0.40, y = 0.60, L = \small {\texttt{2}}]{v11}
\Vertex[style = labeledStyle, x = 0.60, y = 0.80, L = \small {\texttt{3}}]{v12}
\Edge[label = \tiny {}, labelstyle={auto=right, fill=none}](v1)(v0)
\Edge[label = \tiny {}, labelstyle={auto=right, fill=none}](v1)(v2)
\Edge[label = \tiny {}, labelstyle={auto=right, fill=none}](v3)(v1)
\Edge[label = \tiny {}, labelstyle={auto=right, fill=none}](v4)(v0)
\Edge[label = \tiny {}, labelstyle={auto=right, fill=none}](v5)(v2)
\Edge[label = \tiny {}, labelstyle={auto=right, fill=none}](v6)(v7)
\Edge[label = \tiny {}, labelstyle={auto=right, fill=none}](v8)(v7)
\Edge[label = \tiny {}, labelstyle={auto=right, fill=none}](v8)(v10)
\Edge[label = \tiny {}, labelstyle={auto=right, fill=none}](v9)(v6)
\Edge[label = \tiny {}, labelstyle={auto=right, fill=none}](v11)(v7)
\Edge[label = \tiny {}, labelstyle={auto=right, fill=none}](v12)(v0)
\Edge[label = \tiny {}, labelstyle={auto=right, fill=none}](v12)(v8)
\end{tikzpicture}
\caption{Four subgraphs forbidden from a 3-critical graph $G$.}
\label{tree1-pic}
\label{umbrella-pic}
\label{jellyfish-pic}
\label{fig:bigtree}
\end{center}
\end{figure}
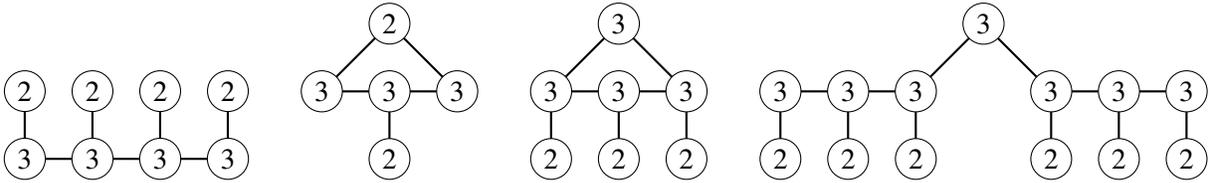

Woodall conjectured \cite{woodall2008average} that the average degree of every
4-critical graph is at least 3.6, which is best possible due to $K_5-e$.
We have proved this conjecture (modulo computer proofs of reducibility).
However, the proof requires 39 reducible configurations, so we defer it to an
appendix; even there, we omit the computer-aided reducibility proofs.
Here, we give a brief outline to illustrate the technique.

Our proof uses the discharging method.  Assume that $G$ is a 4-critical graph.
Each vertex begins with an initial charge, which is its degree.  We redistribute
the charge (without changing its sum), with the goal that every vertex finishes
with charge at least 3.6.  If this is true, then $G$ has the desired average
degree.  To redistribute charge, we successively apply the following 3 rules.

\begin{enumerate}
\item[(R1)] Each 2-vertex takes $.8$ from each 4-neighbor.
\item[(R2)] Each 3-vertex with three 4-neighbors takes $.2$ from each 4-neighbor.
Each 3-vertex with two 4-neighbors takes $.3$ from each 4-neighbor.
\item[(R3)] Each 4-vertex with charge in excess of $3.6$ after (R2) splits this
		excess evenly among its 4-neighbors with charge less than $3.6$.
\end{enumerate}
	
	
By Vizing's Adjacency Lemma (VAL), each neighbor of a 2-vertex $v$ is a
4-neighbor.  Thus, $v$ finishes with charge at least $2+2(.8)=3.6$.
Again by VAL, each 3-vertex $v$ has at least two 4-neighbors. So $v$ finishes
with charge at least $3+3(.2)$ or $3+2(.3)$, both of which are at least 3.6.

It is also easy to check that each 4-vertex $v$ finishes with charge at least 3.2;
by VAL, $v$ has at least two 4-neighbors, and if it has a 2-neighbor, then it has
three 4-neighbors.  So the remainder of the proof consists in showing that all
4-vertices that finish (R2) with charge less than 3.6 receive enough charge by
(R3).  The intuition is simple: if $v$ has few low degree neighbors and
neighbors of neighbors, then $v$ gets enough charge; otherwise, $v$ is contained
in some reducible configuration, which contradicts our choice of $G$ as
4-critical.

\section{Superabundance sufficiency and adjacency lemmas}
In the previous sections, we studied $k$-fixable graphs, which are reducible
configurations for graphs with fixed maximum degree.  Here we study a more
general notion that behaves similarly to Vizing Fans, Kierstead Paths, and
Tashkinov Trees.  Specifically, we consider graphs that are fixable for all
superabundant list assignments.  

\subsection{Superabundant fixability in general}
	
\begin{defn}
If $G$ is a graph and $\func{f}{V(G)}{\IN}$ with $f(v) \ge d_G(v)$ for all $v
\in V(G)$, then $G$ is $f$-fixable if $G$ is $(L, P)$-fixable for every $L$
with $|L(v)| \ge f(v)$ for all $v \in V(G)$ and every $L$-pot $P$ such that
$(G,L)$ is superabundant.  If a graph $G$ is $f$-fixable when $f(v)=d_G(v)$ for
each $v$, then $G$ is \emph{degree-fixable}.
\end{defn}
	
For example, Lemma \ref{FixabilityOfStars} shows that multistars are
degree-fixable.  We have also found that the $4$-cycle is degree-fixable.
	
\begin{problem}
Classify the degree-fixable multigraphs (specifically, containment minimal
ones).
\end{problem} 
	
Since $f(v) \ge d_G(v)$, it is convenient to express the values of $f$ as $d+k$
for a non-negative integer $k$; this means $f(v) = d_G(v) + k$.  For brevity,
when $k=0$ we just write $d$, and when $k=1$ we write $d+$, since the figures
only depict the cases $k=0$ and $k=1$.
Looking at the trees in Figures \ref{fig:fixable4}, \ref{fig:fixable5tree}, and
\ref{fig:fixable6tree} we might conjecture that a tree is $f$-fixable whenever
at most one internal vertex is labeled ``$d$''.  This conjecture
continues to hold for many more examples.  
	
\begin{figure}[!htb]
\centering
\includegraphics[scale=0.25]{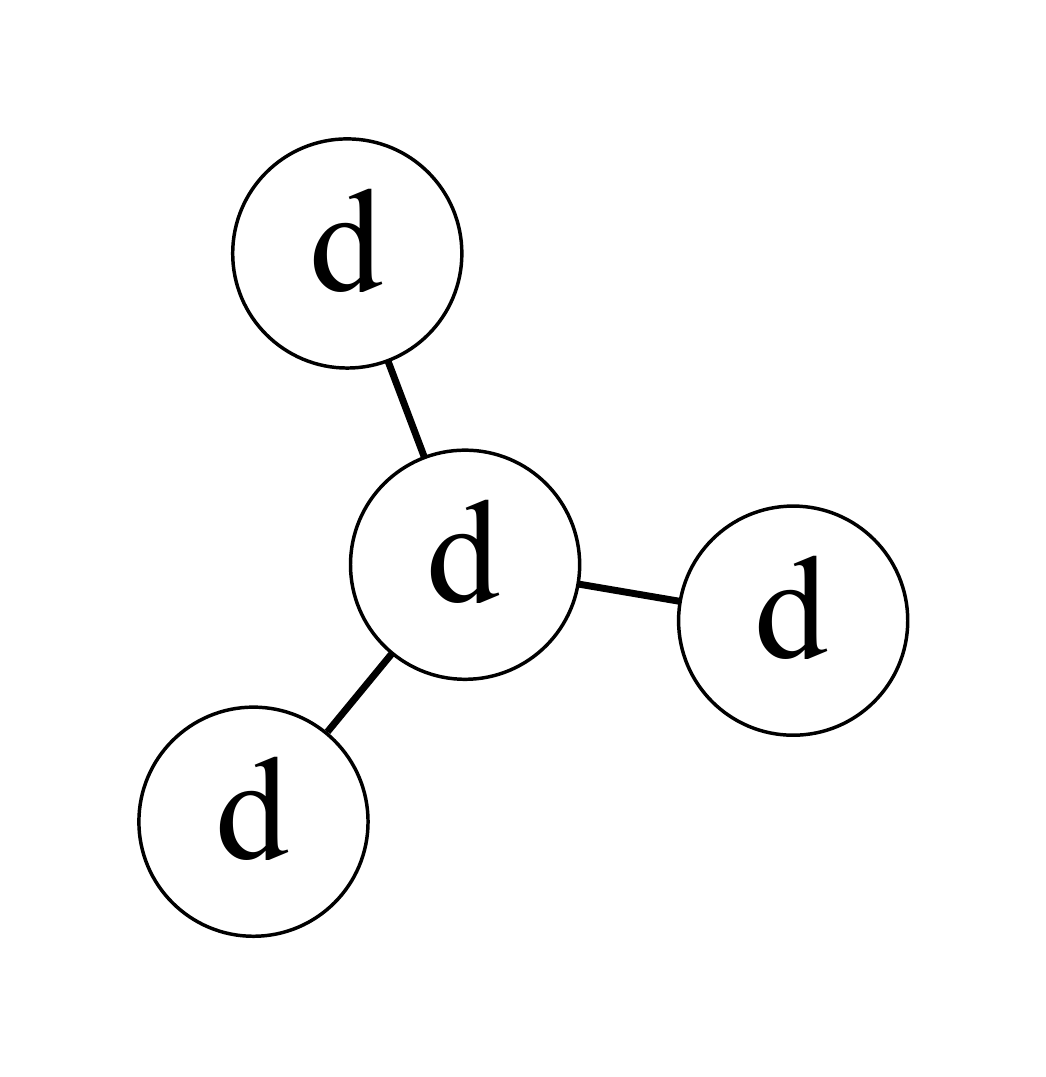}
\includegraphics[scale=0.25]{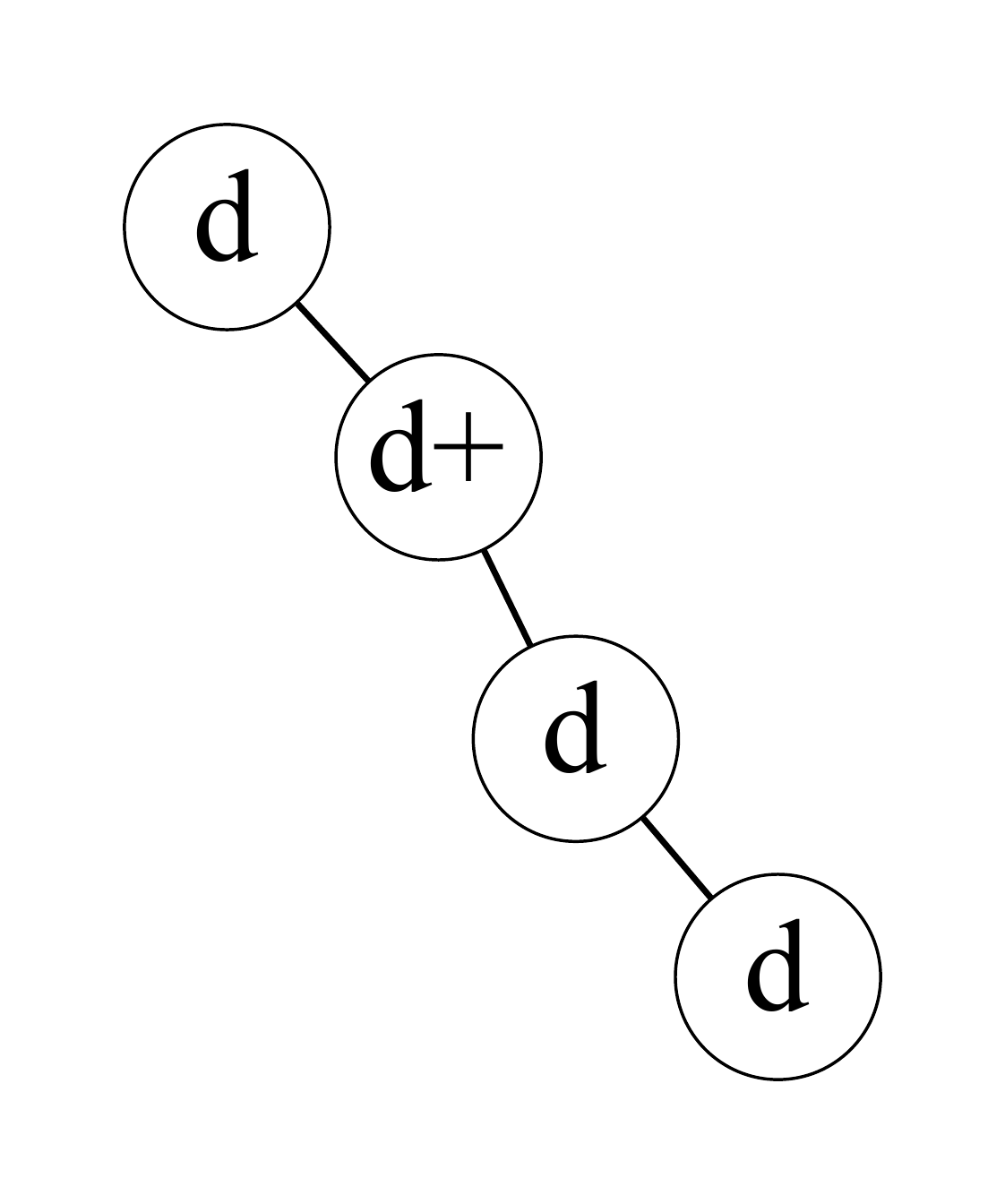}
\includegraphics[scale=0.25]{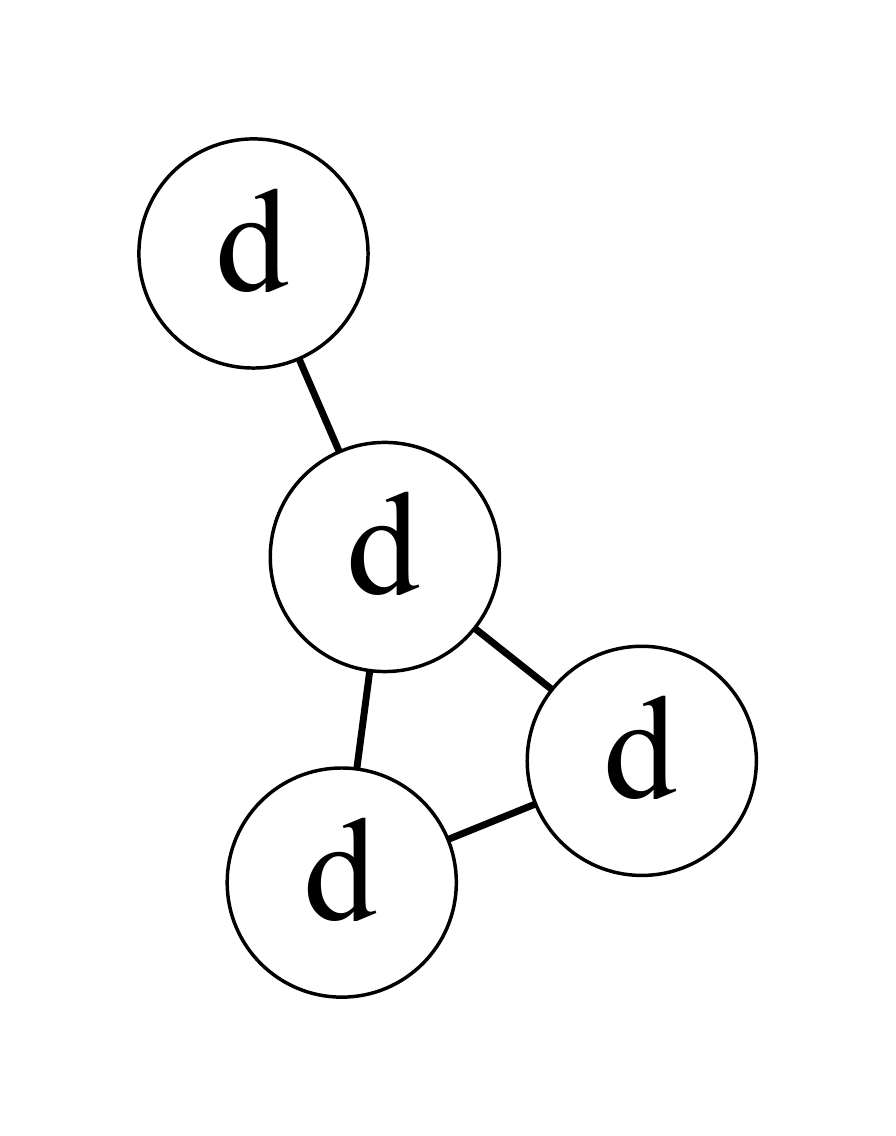}
\includegraphics[scale=0.25]{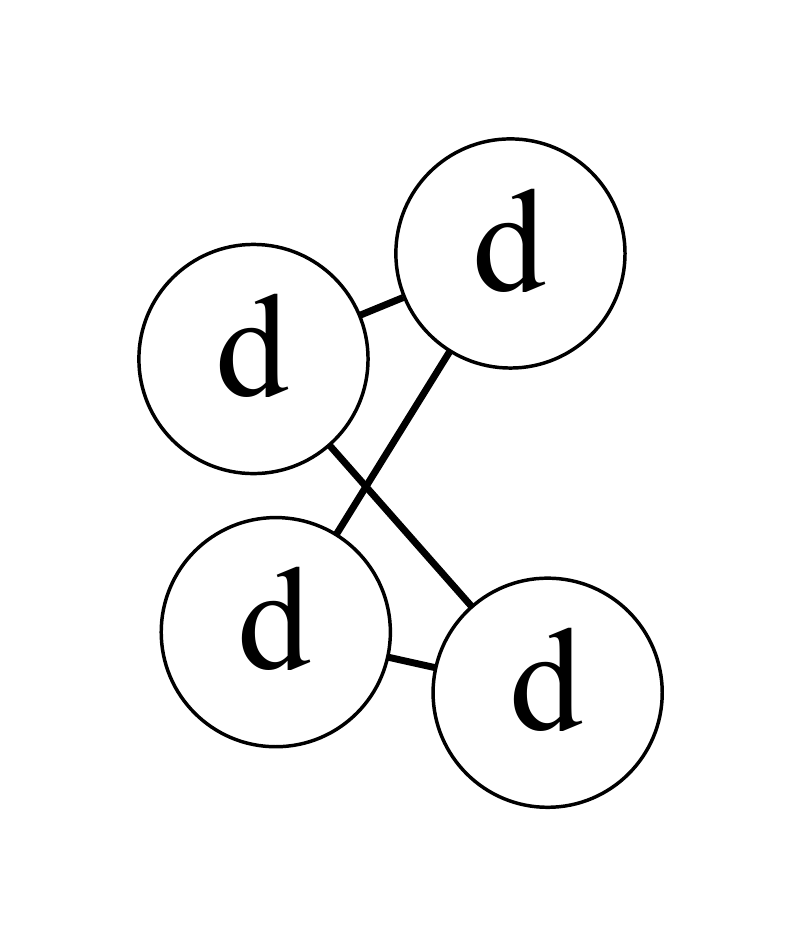}
\includegraphics[scale=0.25]{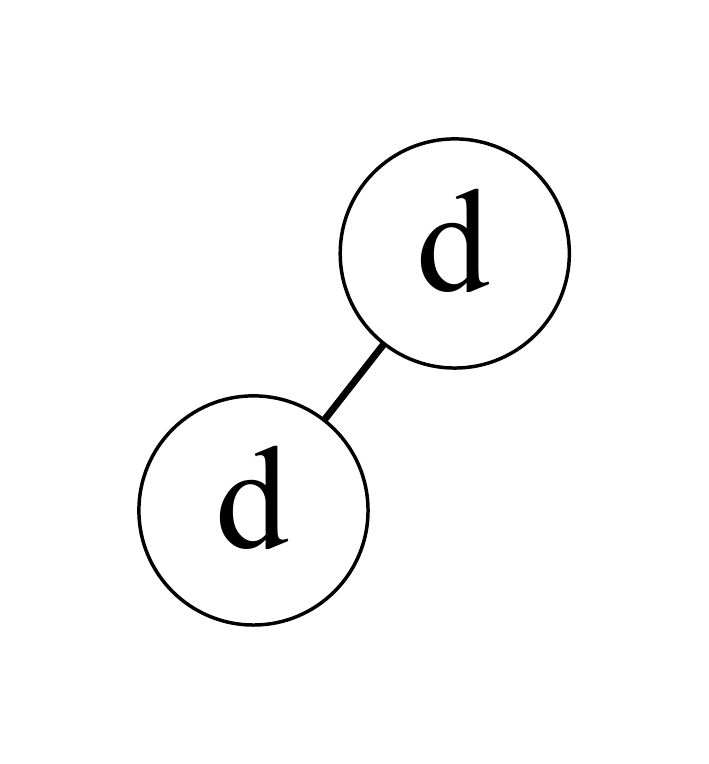}
\includegraphics[scale=0.25]{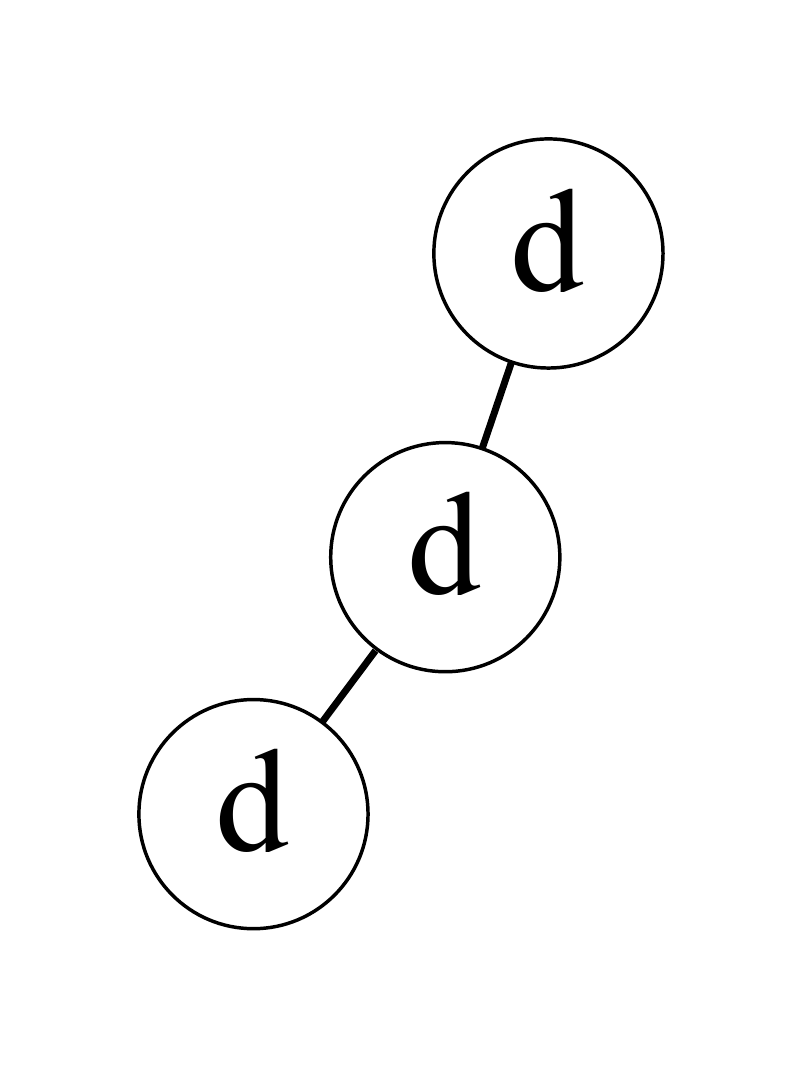}
\includegraphics[scale=0.25]{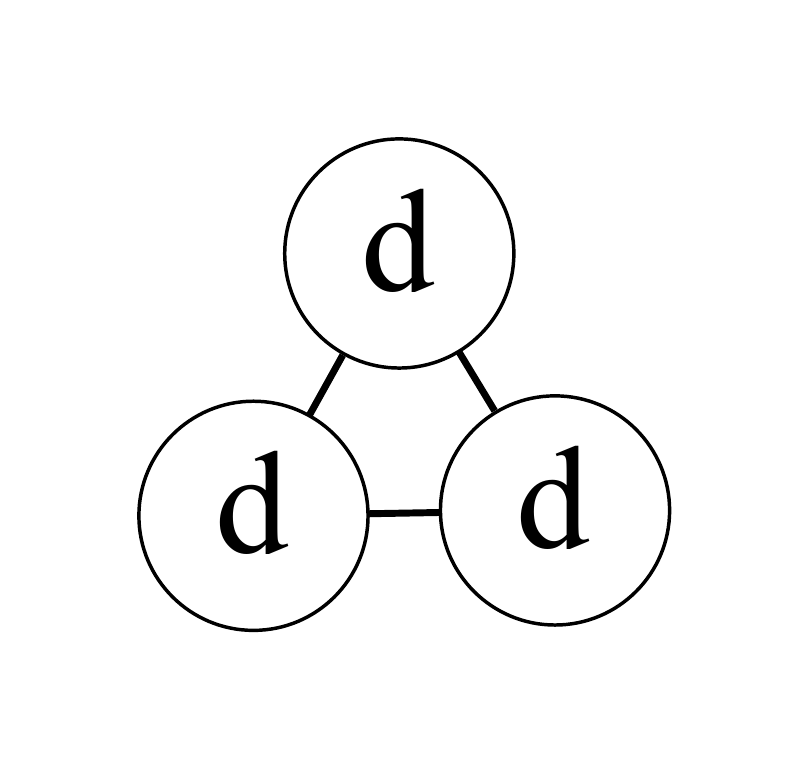}
\includegraphics[scale=0.25]{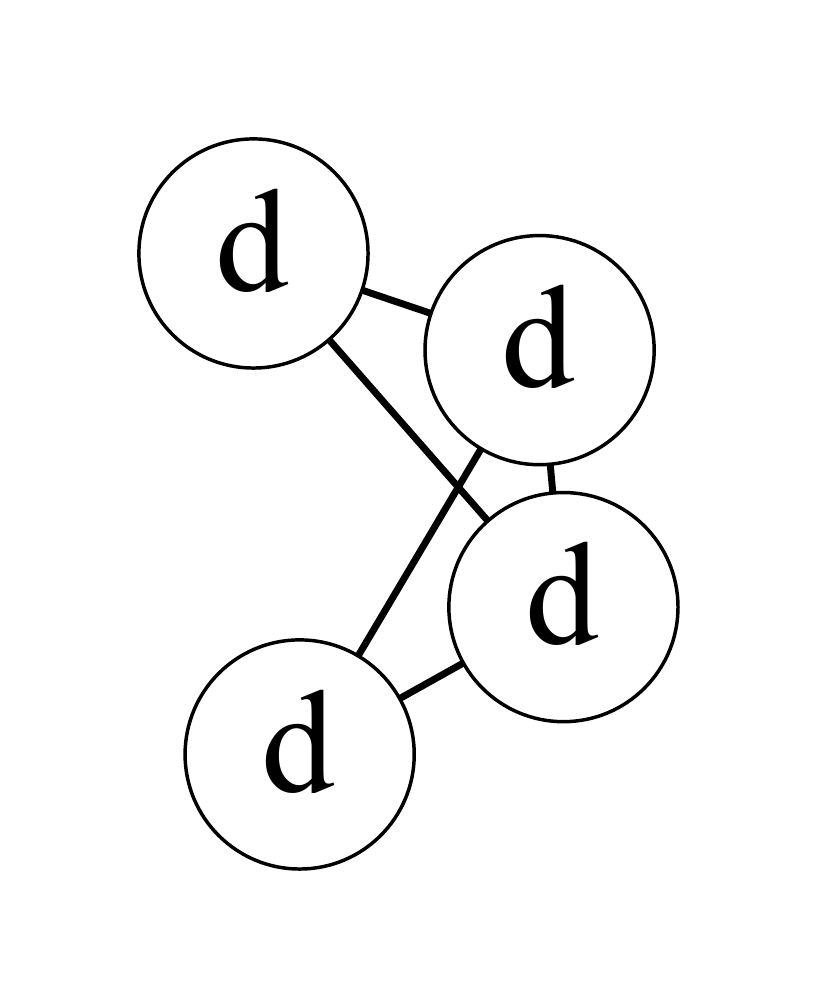}
\includegraphics[scale=0.25]{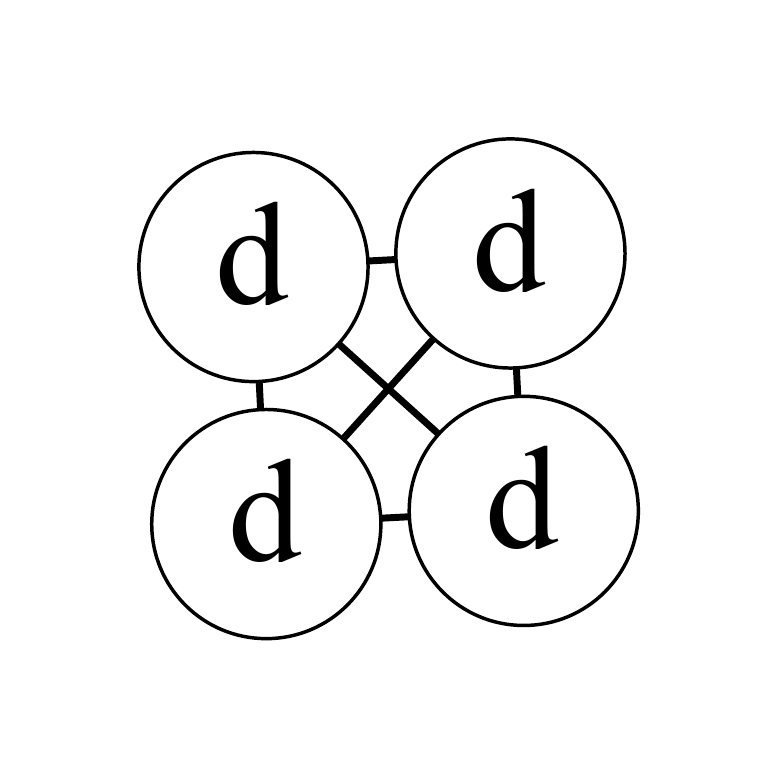}
\caption{The fixable graphs on at most 4 vertices.}
\label{fig:fixable3}
\label{fig:fixable4}
\end{figure}

\begin{figure}[!htb]
\centering
\includegraphics[scale=0.25]{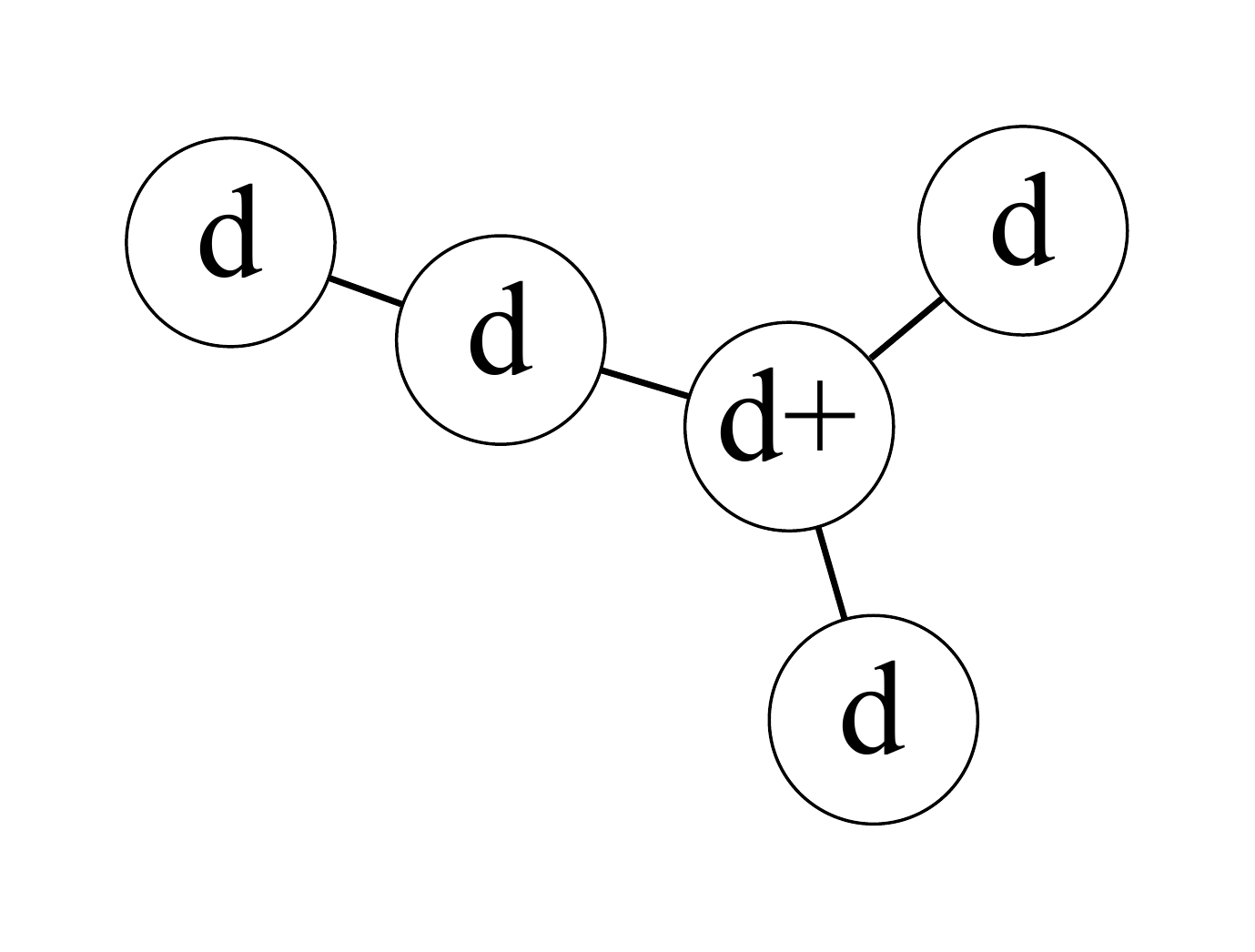}
\includegraphics[scale=0.25]{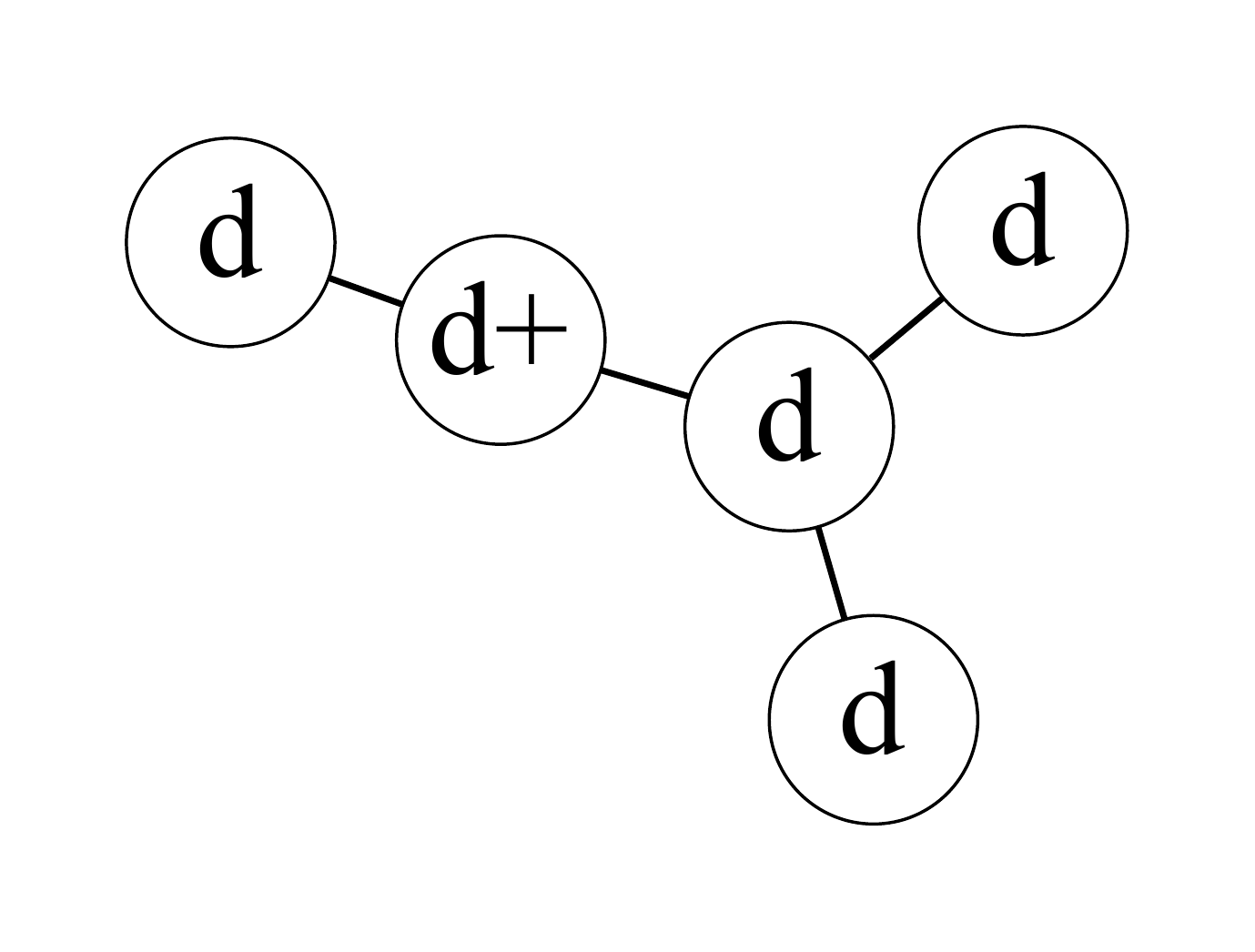}
\includegraphics[scale=0.25]{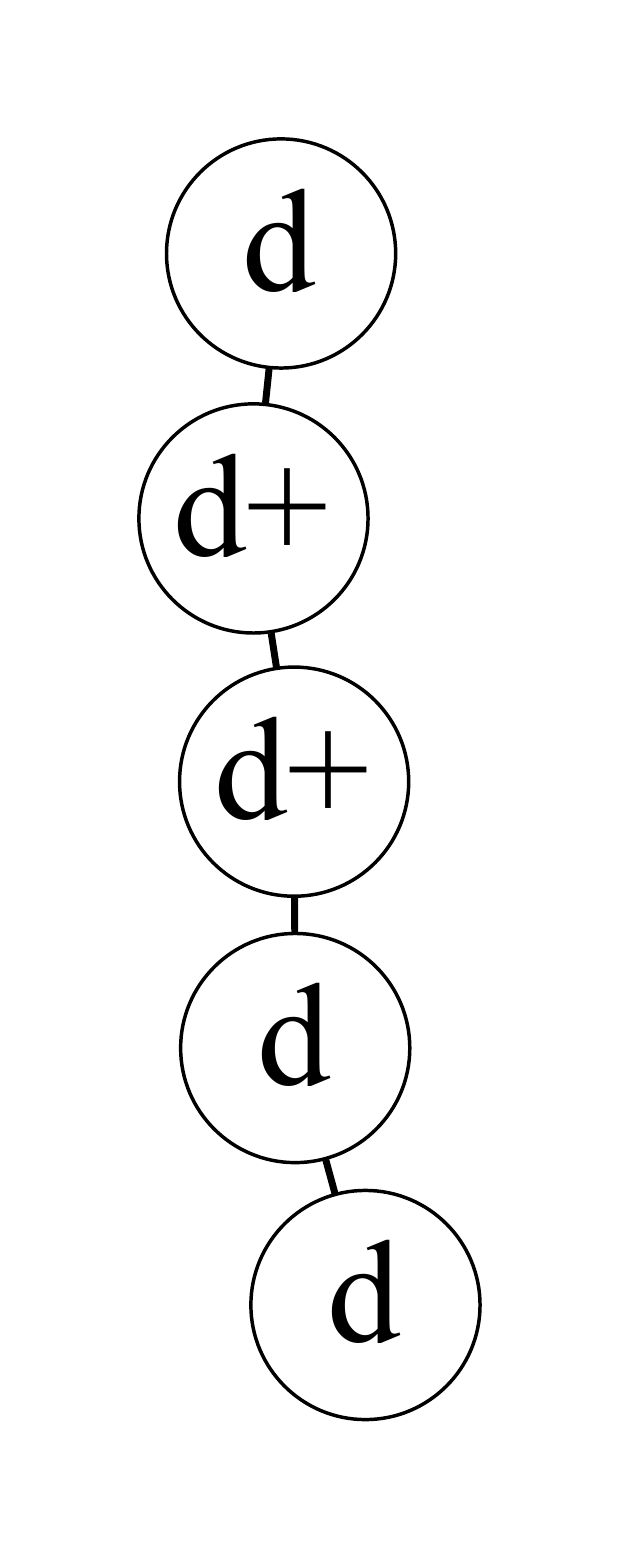}
\includegraphics[scale=0.25]{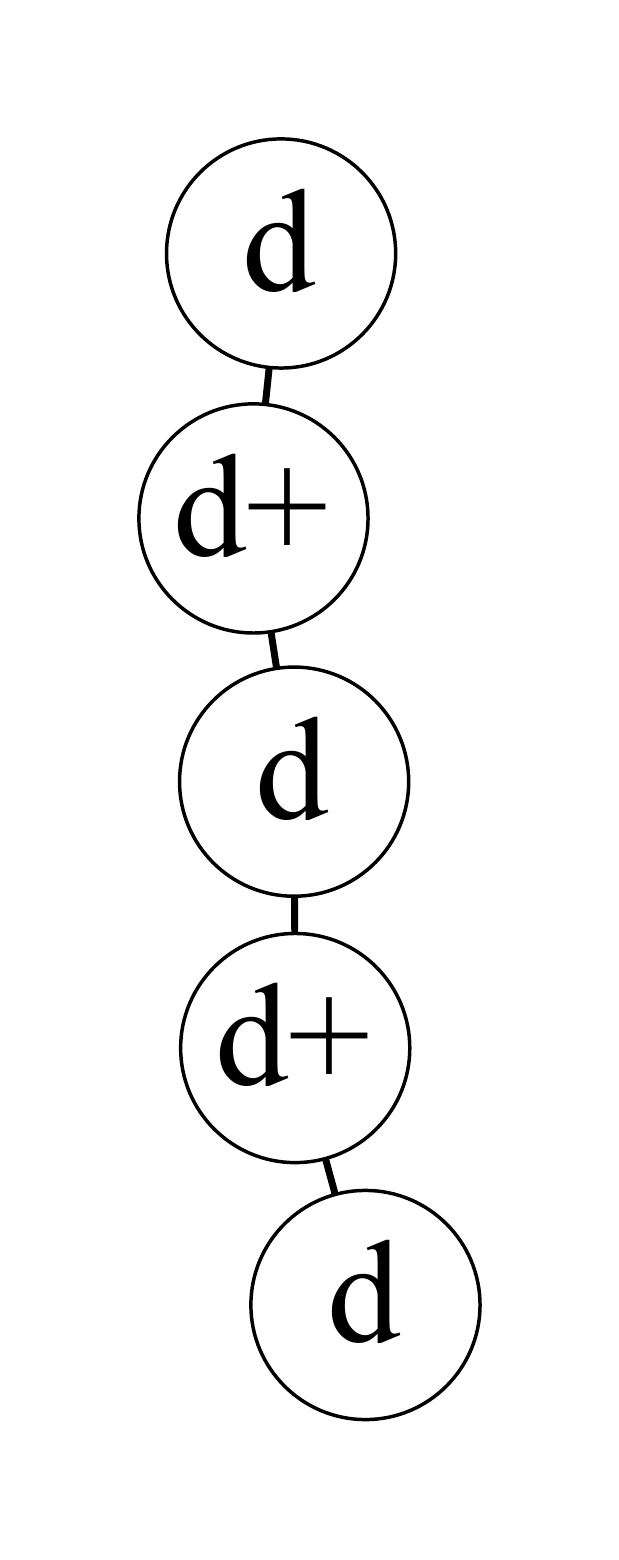}
\caption{The fixable trees with maximum degree at most 3 on 5 vertices.}
\label{fig:fixable5tree}
\end{figure}
		
\begin{figure}[!htb]
\centering
\includegraphics[scale=0.25]{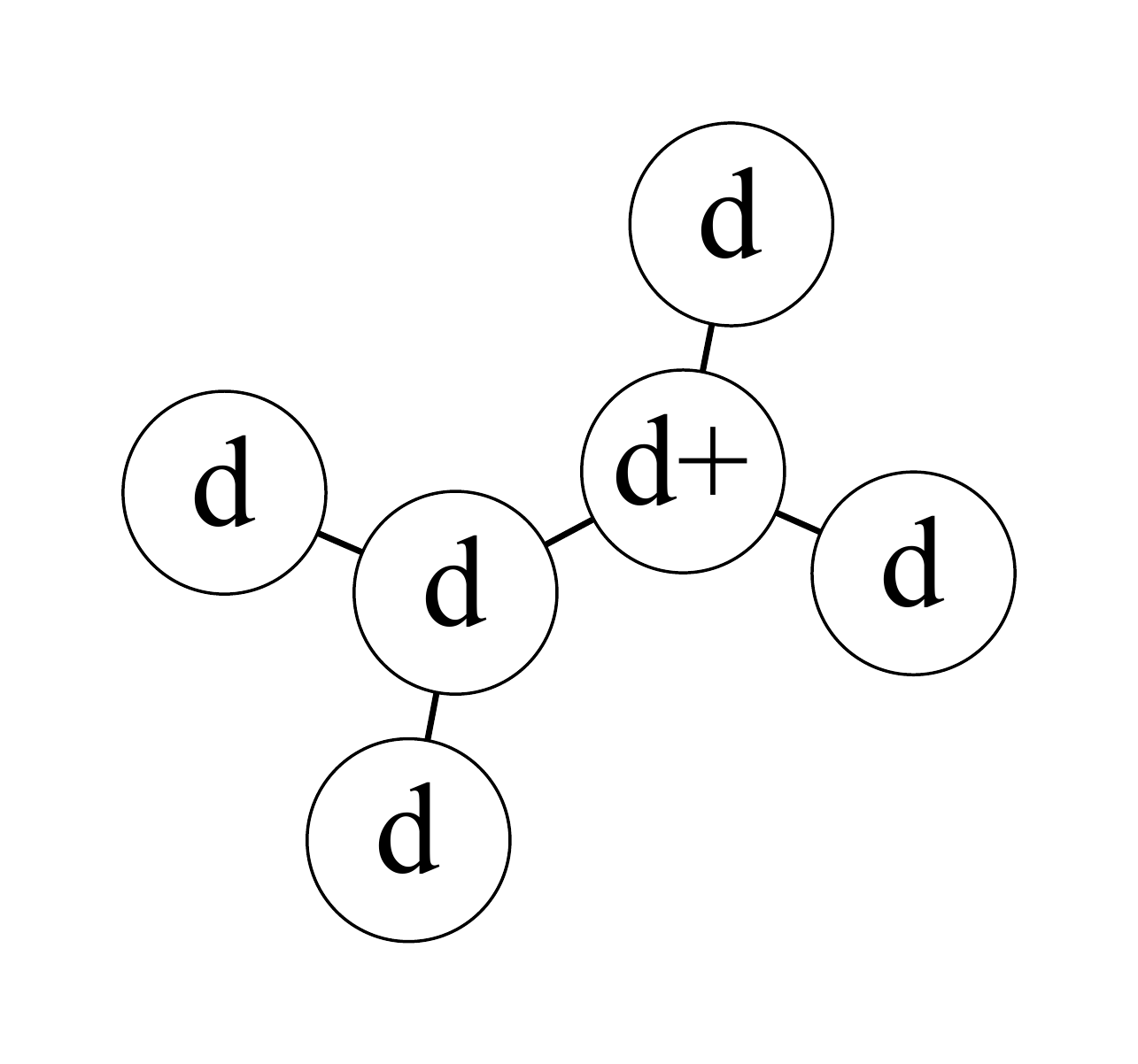}
\includegraphics[scale=0.25]{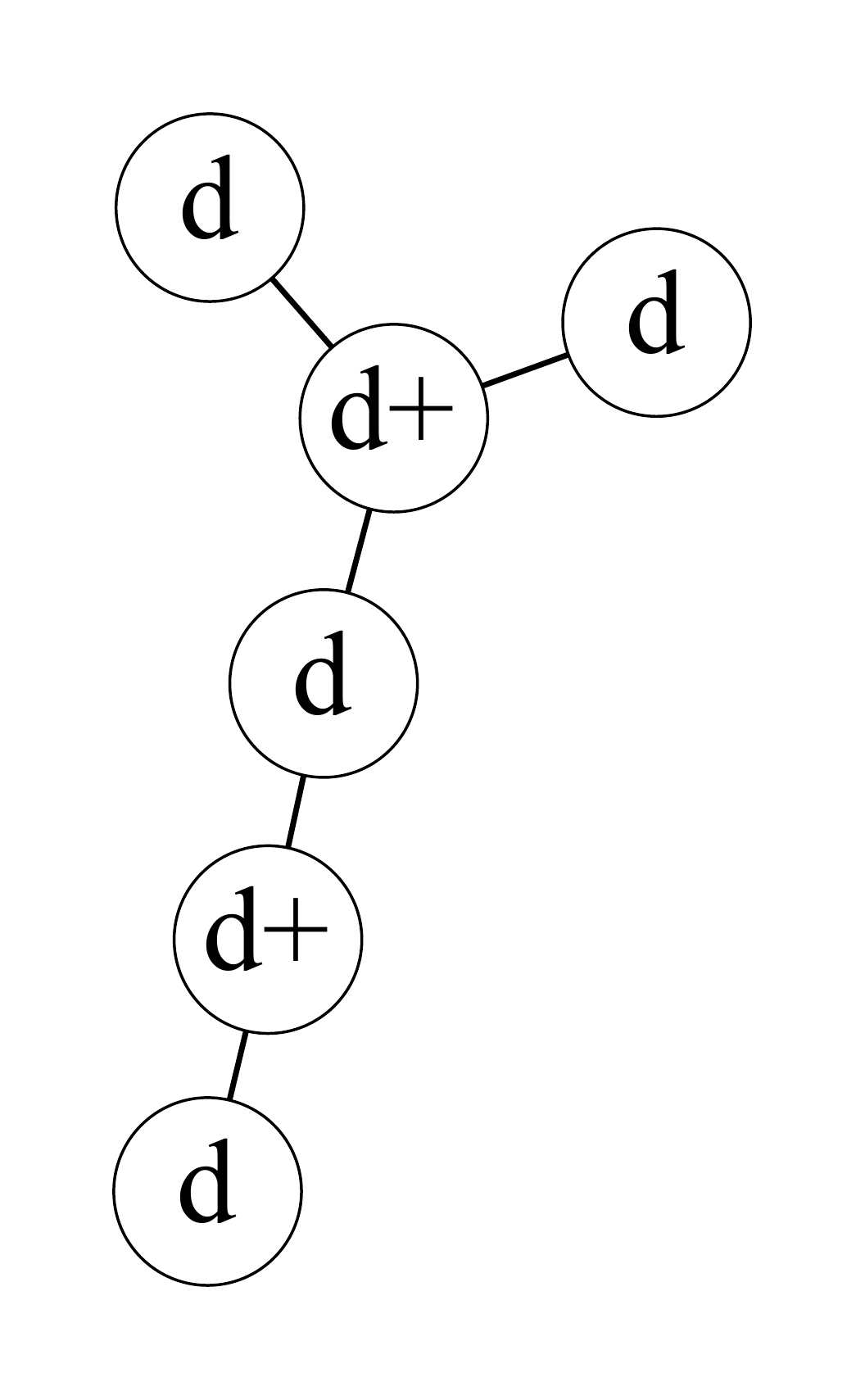}
\includegraphics[scale=0.25]{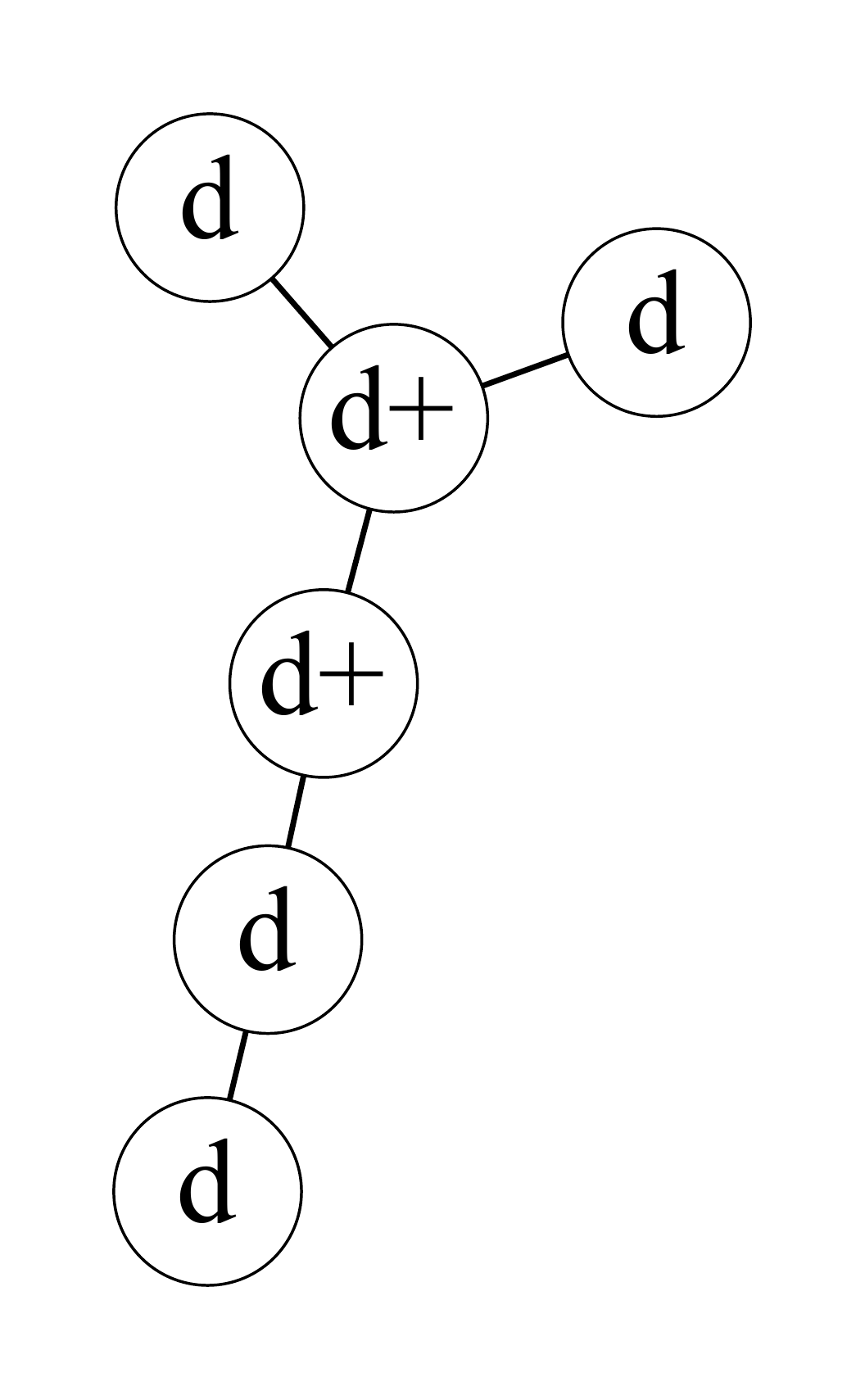}
\includegraphics[scale=0.25]{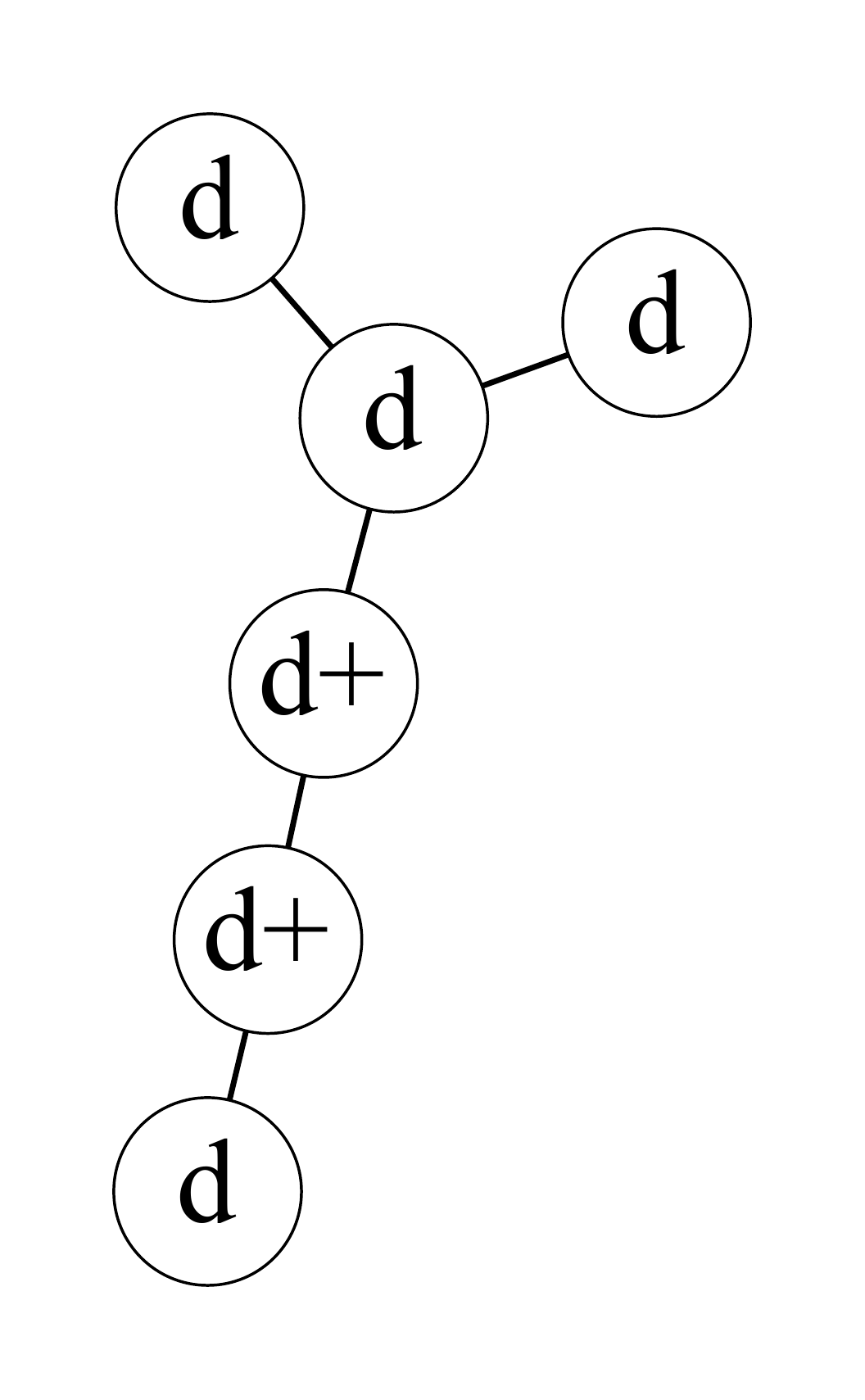}
\includegraphics[scale=0.25]{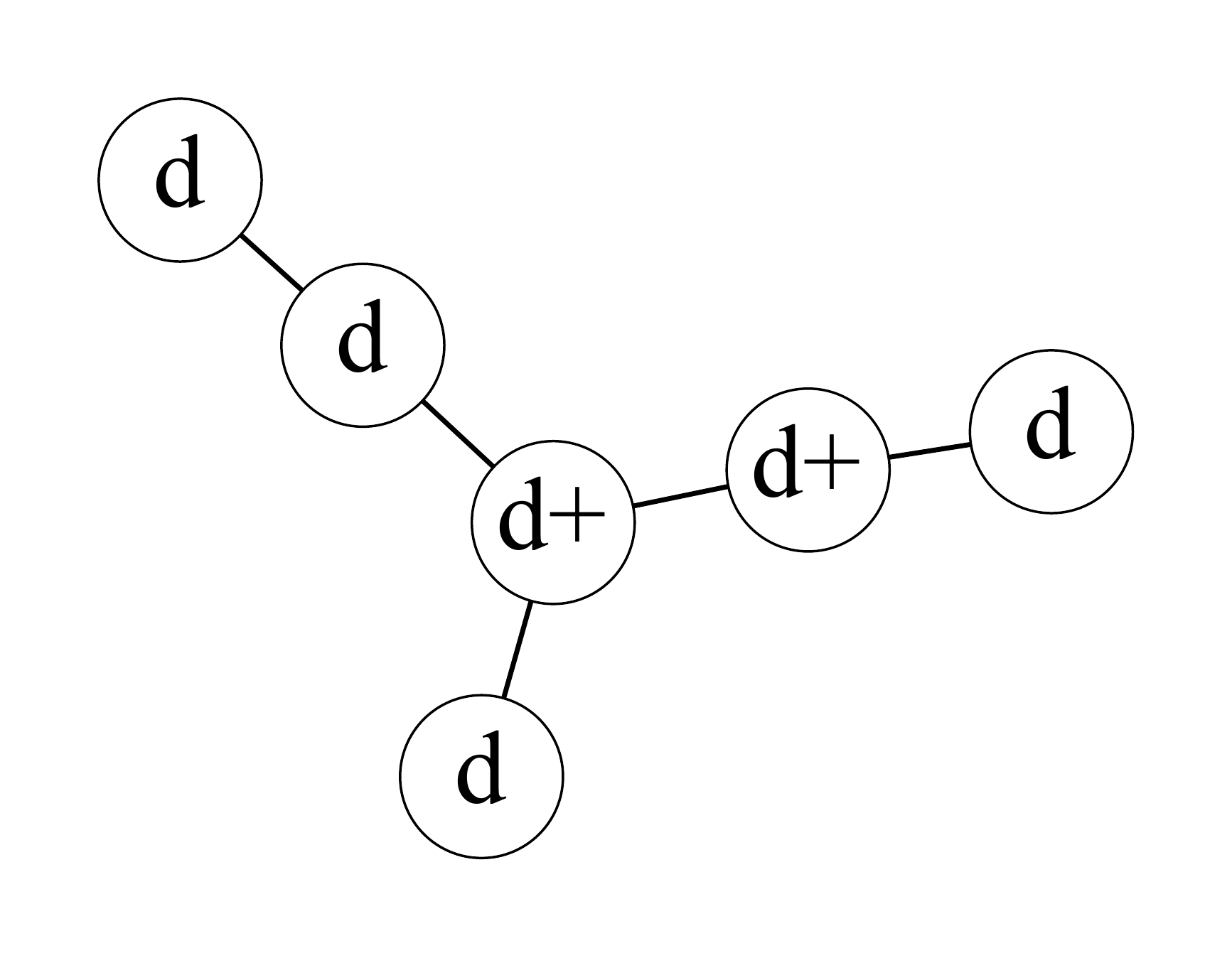}
\includegraphics[scale=0.25]{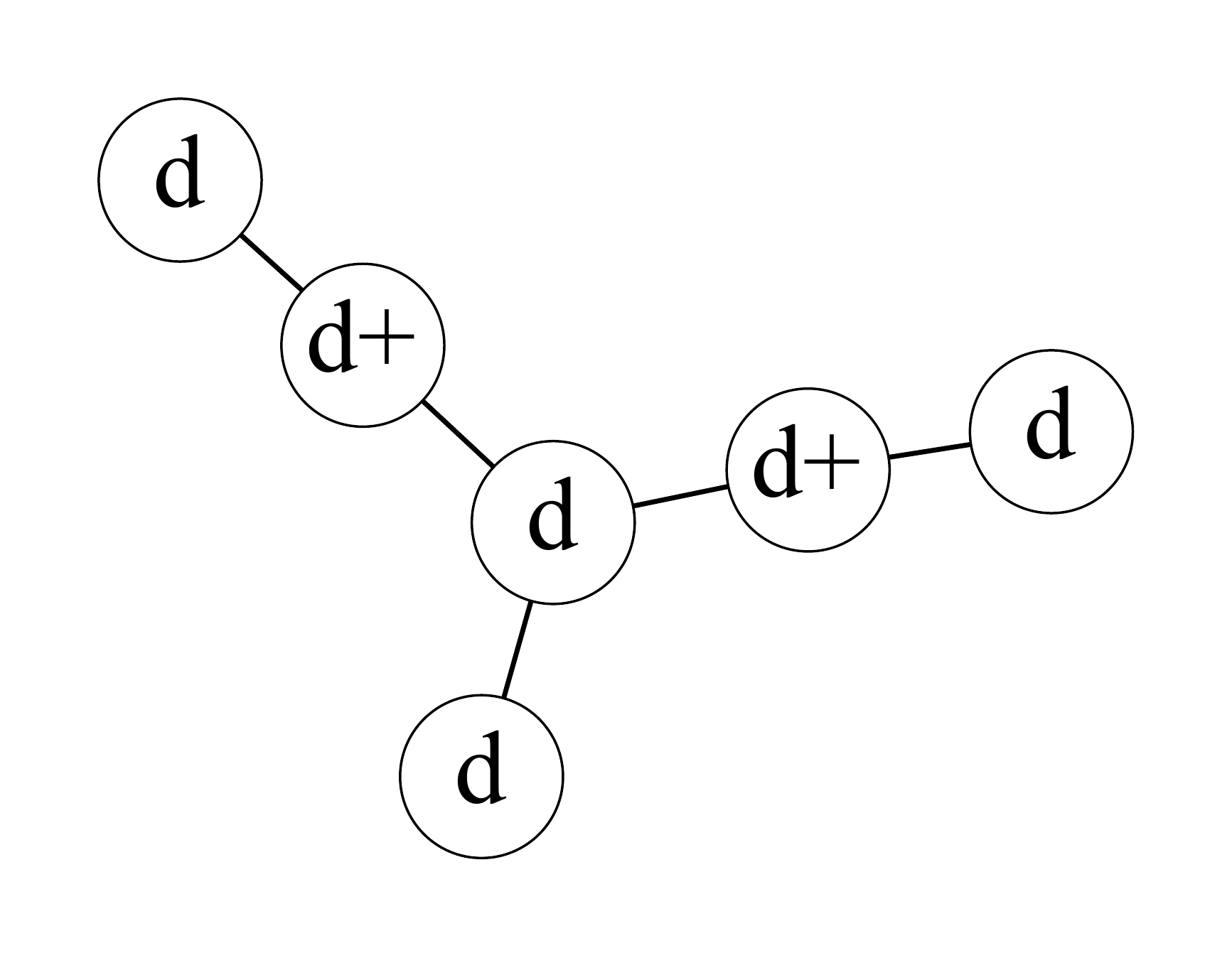}
\includegraphics[scale=0.25]{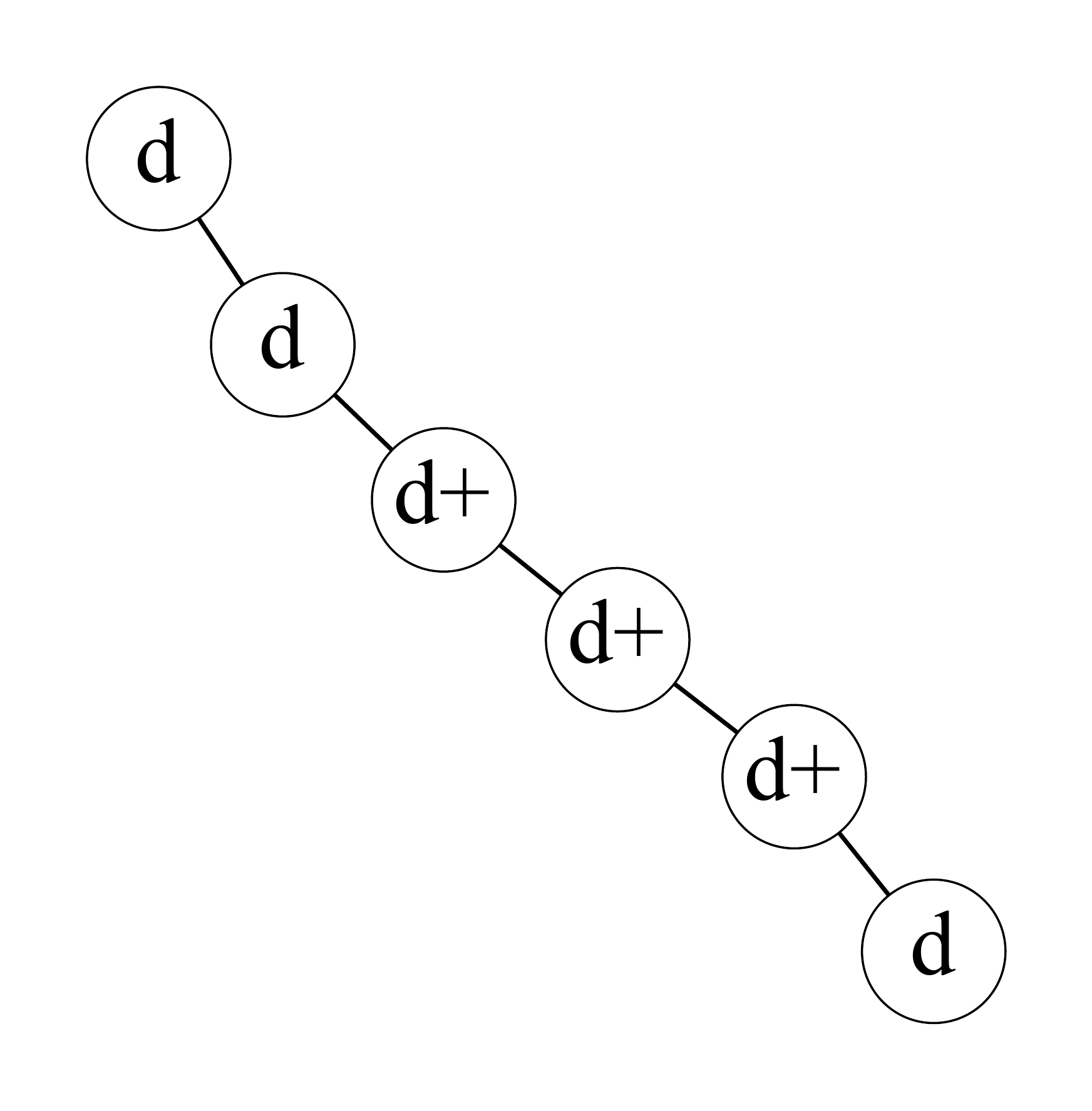}
\includegraphics[scale=0.25]{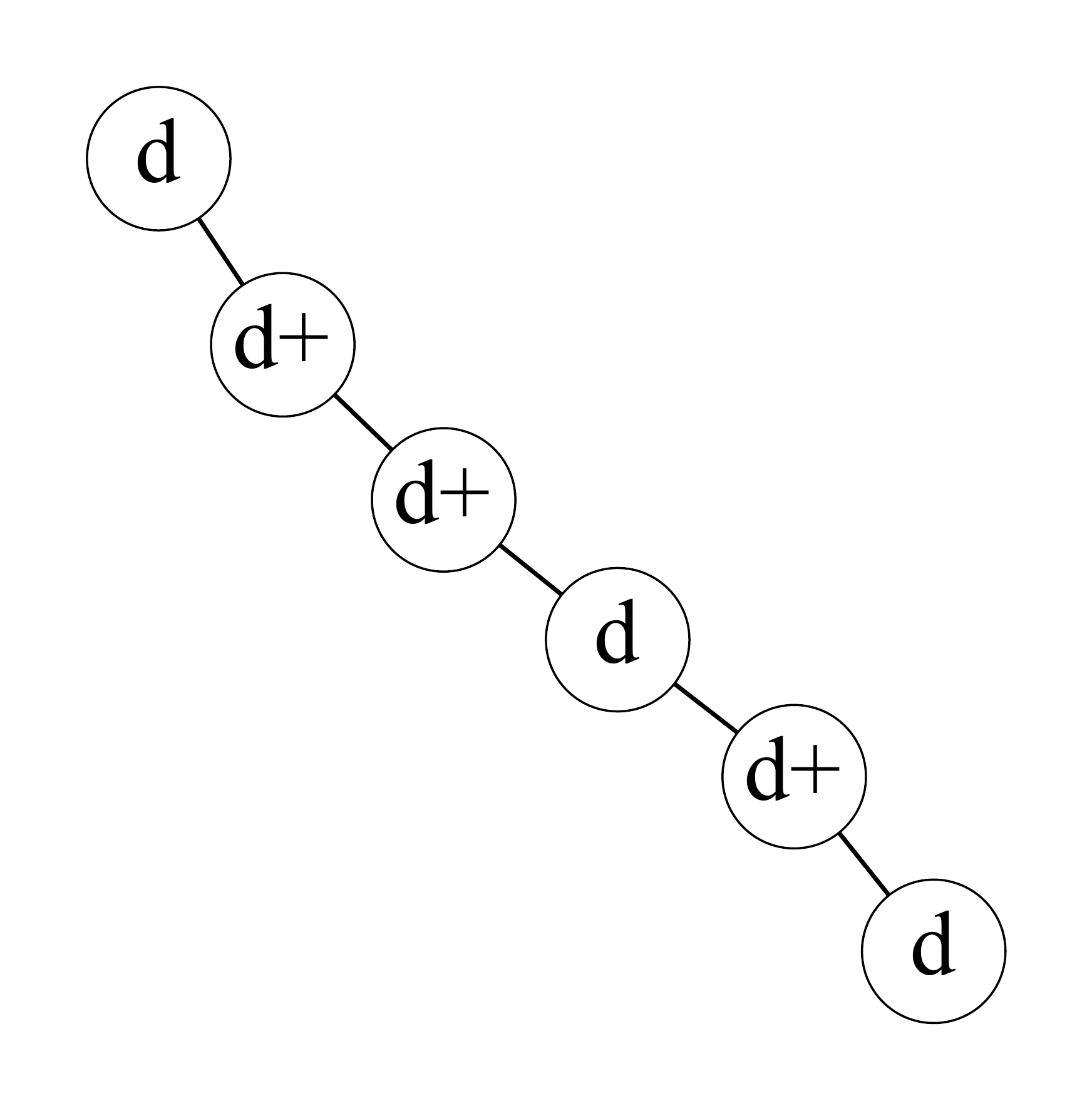}
\caption{The fixable trees with maximum degree at most 3 on 6 vertices.}
\label{fig:fixable6tree}
\end{figure}

\begin{conjecture}
\label{OneHighConjecture}
A tree $T$ is $f$-fixable if $f(v) = d_T(v)$ for at most one non-leaf $v$ of $T$.
\end{conjecture}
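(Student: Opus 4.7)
The plan is to adapt the proof of Theorem \ref{FixabilityOfStars} to trees, substituting the Marcotte--Seymour result (Lemma \ref{MultiTreeHall}) for Hall's theorem. I would suppose the conjecture fails and choose a counterexample $(T, L, P)$ minimizing $\size{T}$ and, subject to that, maximizing $\eta_L(T)$: so $|L(v)| \ge f(v)$ for all $v$, $(T, L)$ is superabundant, but $T$ is not $(L, P)$-fixable. As a first move, if $\eta_L(H) \ge \size{H}$ for every $H \subseteq T$, then Lemma \ref{MultiTreeHall} produces an $L$-edge-coloring of $T$, giving the immediate contradiction that $T$ is $(L, P)$-fixable by clause (1) of the definition. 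So some $H_0 \subseteq T$ must satisfy $\psi_L(H_0) \ge \size{H_0} > \eta_L(H_0)$, and hence some color $\alpha \in \pot(L)$ has $\nu(H_{0, \alpha}) < \floor{\card{H_{0, \alpha}}/2}$; by Berge--Tutte, the forest $H_{0, \alpha}$ contains a vertex set $S$ whose removal creates strictly more than $|S|$ odd components.

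Next, mirroring the multistar proof, I would try to locate a second color $\beta$ (either one used on very few edges, or one outside the set of colors usable on any edge) and a pairing of $S_{L, \alpha, \beta}$ such that, after swapping $\alpha$ and $\beta$ on some chosen subset of the pairs, the resulting assignment $L'$ has strictly larger $\eta_{L'}(T)$ while remaining superabundant and still satisfying the list-size hypothesis. The condition on $f$ is crucial here: at every non-leaf $v$ distinct from the possibly tight vertex $v^*$, we have $f(v) \ge d_T(v) + 1$, so $L(v)$ carries at least one color of slack, supplying a pool of candidate colors $\beta$ and giving parity flexibility at $v$. All tightness $\psi_L(H) = \size{H}$ must therefore concentrate at $v^*$; in particular, any deficient $H_0$ should contain $v^*$, and the Berge--Tutte obstruction $S$ ought to force $v^* \in S$. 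This is the structural lever I would use to translate matching deficiency in $H_{0, \alpha}$ into a valid swap, yielding $\eta_{L'}(T) > \eta_L(T)$ and contradicting the extremal choice of $(T, L, P)$.

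The hard part, and essentially what the paper flags as the unresolved step, is to guarantee that such a swap both exists and does not break superabundance elsewhere in $T$. In the star case every subgraph of $T$ is itself a substar about the same center, so a swap perturbs $\psi_L$ only very locally; for a tree, a Kempe swap over $S_{L, \alpha, \beta}$ may affect many subgraphs $H \subseteq T$ simultaneously, and one must verify the parity constraints that drove Lemma \ref{SuperabundanceIsNecessary} for all of them. A natural fallback by induction on $\size{T}$, in which one deletes a carefully chosen leaf $x$ adjacent to some $y \ne v^*$ and colors $xy$ with $\alpha \in L(x) \cap L(y)$ chosen so that every tight subtree $H \ni y$ has $\card{H_{L, \alpha}}$ even, reduces to essentially the same parity bookkeeping. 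I expect a successful proof to require either a new global invariant controlling this bookkeeping across all tight subtrees simultaneously, or a structural decomposition of deficient subforests $H_{0, \alpha}$ that exploits the tree structure of $T$ in a way the star argument did not need.
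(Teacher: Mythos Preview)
The statement you are attempting is a \emph{conjecture} in the paper, not a theorem: the paper gives no proof of it. In fact, the authors explicitly discuss the very approach you propose---adapting the multistar proof by replacing Hall's theorem with the Marcotte--Seymour lemma (Lemma~\ref{MultiTreeHall})---and say ``one might hope that we could do the same for arbitrary trees, with Lemma~\ref{MultiTreeHall} in place of Hall's theorem \ldots\ but we haven't yet made this work.'' So there is nothing in the paper to compare your argument against; your outline is essentially the strategy the authors tried and could not complete.

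You have correctly located the gap yourself: the step in which a color swap is chosen to strictly increase $\eta_{L}(T)$ while preserving superabundance of \emph{every} subgraph $H\subseteq T$ does not go through as in the star case. For a star, every connected subgraph contains the center, so the effect of a swap on $\psi_L(H)$ is controlled by a single vertex; for a general tree the subgraphs $H$ are not nested in this way, and a swap that repairs one deficient $H_0$ can destroy abundance of an unrelated $H_1$. Your appeal to the slack $|L(v)|\ge d_T(v)+1$ at all non-leaf $v\ne v^*$ is suggestive but does not, as stated, produce the needed global parity control. The paper regards even the special cases of paths and of stars with one edge subdivided as open (Conjecture~\ref{StarWithOneEdgeSubdivided}), and reports only a ``rough draft'' for the simplest of these. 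Until the superabundance-preservation step is resolved, what you have is a plausible plan of attack rather than a proof.
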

Note that by Lemma \ref{KTVImpliesSuperabundant}, this would imply under the
same degree constraints that Tashkinov trees are elementary (that is, each color
is absent from at most one vertex of a Tashkinov tree).  Can this be
proved in the simpler case when the tree is a path?  For paths of length 4, this was
done by Kostochka and Stiebitz; in the next section we conjecture a generalization
of their result to stars with one edge subdivided.   One nice feature of the
superabundance formulation is that since there is no need for an ordering as 
with Tashkinov trees, we can easily formulate results about graphs with cycles.
The following is the most general thing we might think is true.

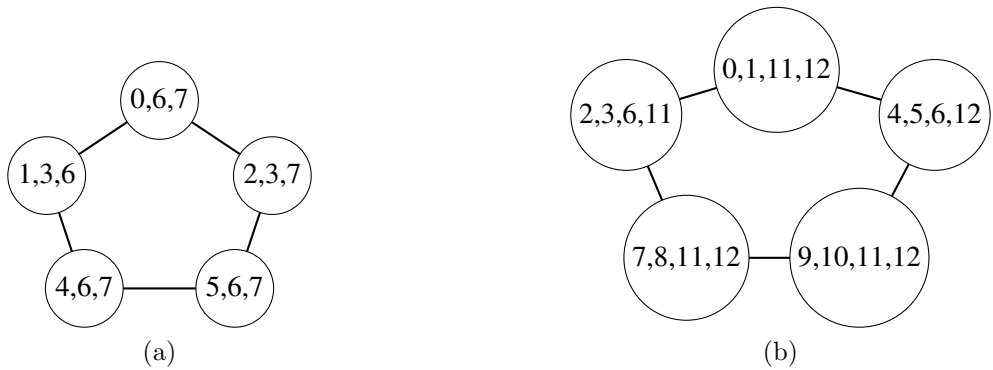
\begin{figure}[!htb]
\renewcommand{\ttdefault}{ptm}
\subfloat[]{\makebox[.5\textwidth]{
\begin{tikzpicture}[scale = 10]
\tikzstyle{VertexStyle} = []
\tikzstyle{EdgeStyle} = []
\tikzstyle{labeledStyle}=[shape = circle, minimum size = 6pt, inner sep = 2.2pt, draw]
\tikzstyle{unlabeledStyle}=[shape = circle, minimum size = 6pt, inner sep = 1.2pt, draw, fill]
\Vertex[style = labeledStyle, x = 0.5, y = 0.90, L = \small {\texttt{0,6,7}}]{v0}
\Vertex[style = labeledStyle, x = 0.35, y = 0.80, L = \small {\texttt{1,3,6}}]{v1}
\Vertex[style = labeledStyle, x = 0.65, y = 0.80, L = \small {\texttt{2,3,7}}]{v2}
\Vertex[style = labeledStyle, x = 0.40, y = 0.65, L = \small {\texttt{4,6,7}}]{v3}
\Vertex[style = labeledStyle, x = 0.60, y = 0.65, L = \small {\texttt{5,6,7}}]{v4}
\Edge[label = \small {}, labelstyle={auto=right, fill=none}](v1)(v0)
\Edge[label = \small {}, labelstyle={auto=right, fill=none}](v2)(v0)
\Edge[label = \small {}, labelstyle={auto=right, fill=none}](v2)(v4)
\Edge[label = \small {}, labelstyle={auto=right, fill=none}](v3)(v1)
\Edge[label = \small {}, labelstyle={auto=right, fill=none}](v3)(v4)
\end{tikzpicture}
}}
\subfloat[]{\makebox[.5\textwidth]{
\begin{tikzpicture}[scale = 10]
\tikzstyle{VertexStyle} = []
\tikzstyle{EdgeStyle} = []
\tikzstyle{labeledStyle}=[shape = circle, minimum size = 6pt, inner sep = 2.2pt, draw]
\tikzstyle{unlabeledStyle}=[shape = circle, minimum size = 6pt, inner sep = 1.2pt, draw, fill]
\Vertex[style = labeledStyle, x = 0.40, y = 0.90, L = \small {\texttt{0,1,11,12}}]{v0}
\Vertex[style = labeledStyle, x = 0.20, y = 0.84, L = \small {\texttt{2,3,6,11}}]{v1}
\Vertex[style = labeledStyle, x = 0.61, y = 0.84, L = \small {\texttt{4,5,6,12}}]{v2}
\Vertex[style = labeledStyle, x = 0.28, y = 0.65, L = \small {\texttt{7,8,11,12}}]{v3}
\Vertex[style = labeledStyle, x = 0.51, y = 0.65, L = \small {\texttt{9,10,11,12}}]{v4}
\Edge[label = \small {}, labelstyle={auto=right, fill=none}](v1)(v0)
\Edge[label = \small {}, labelstyle={auto=right, fill=none}](v2)(v0)
\Edge[label = \small {}, labelstyle={auto=right, fill=none}](v2)(v4)
\Edge[label = \small {}, labelstyle={auto=right, fill=none}](v3)(v1)
\Edge[label = \small {}, labelstyle={auto=right, fill=none}](v3)(v4)
\end{tikzpicture}
}}
\caption{
Counterexamples to Conjecture \ref{MoonshineConjecture}; the numbers at each
vertex are its list of available colors.
}
\label{fig:goldbergce}
\end{figure}

\begin{conjecture}[false]
	A multigraph $G$ is $f$-fixable if $f(v) > d_G(v)$ for all $v \in V(G)$.
	\label{MoonshineConjecture}
\end{conjecture}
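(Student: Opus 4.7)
The plan is to refute the conjecture by verifying that each of the two list assignments shown in Figure \ref{fig:goldbergce} serves as a counterexample. I focus on part (a) --- the $5$-cycle $G$ with vertices $v_0,\ldots,v_4$ and list assignment $L$ as depicted; part (b) is handled analogously with larger lists and pot. Since $|L(v)|=3>2=d_G(v)$ for every vertex, and since a smaller pot $P$ makes $(L,P)$-fixability strictly harder, it suffices to take $P=\pot(L)$ and verify three things: that $(G,L)$ is superabundant, that $G$ has no $L$-edge-coloring, and that no sequence of Kempe swaps reaches a colorable assignment.

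For superabundance, observe that $\psi_L(H)$ depends only on $V(H)$, while $\size{H}$ is maximized over subgraphs with fixed vertex set when $H$ is induced; hence it suffices to check the $2^5-1$ nonempty induced subgraphs. On the whole graph, colors $6$ and $7$ each appear in four lists, color $3$ in two, and each of $0,1,2,4,5$ in exactly one, giving $\psi_L(G)=2+2+1=5=\size{G}$; the remaining induced subgraphs are handled by the same bookkeeping in a short table. For non-colorability, intersect lists along each edge: $L(v_0)\cap L(v_1)=\{6\}$ and $L(v_1)\cap L(v_3)=\{6\}$, so two edges incident to $v_1$ are both forced to receive color $6$.

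The main work is showing $G$ is not $(L,\pot(L))$-fixable. For every pair $\{a,b\}\subseteq\pot(L)$ I would exhibit an adversary partition of $S_{L,a,b}$ into parts of size at most two such that every swap subset $J$ yields a list assignment $L'$ that is again not $(L',\pot(L))$-fixable. The analysis collapses dramatically by noting that each of the colors $0,1,2,4,5$ is \emph{solitary} --- present in only one list --- so pairing such a color with anything merely relabels a single list slot and leaves the forced-color obstruction at $v_1$ intact. The interesting cases are therefore pairs within $\{3,6,7\}$ together with mixed pairs $(\text{solitary},c)$ for $c\in\{3,6,7\}$; for each one I would produce a partition under which the swapped assignment still has two edges at a common vertex forced to a common color, allowing the argument to iterate.

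The main obstacle is that the fixability recursion has no a priori depth bound, so a concise hand-written certificate requires identifying a swap-invariant that is preserved throughout the entire game tree; the cleanest resolution is the computer enumeration the paper refers to. A human proof would exhibit an invariant of the form ``two edges $e,e'$ sharing a vertex satisfy $L(e)\cap L(e')=\{c\}$ for a common $c$'' and check its persistence under every admissible Kempe swap. Example (b) works the same way, with colors $11$ and $12$ playing the roles of $6$ and $7$, and the extra lists and pot supplying only additional solitary colors.
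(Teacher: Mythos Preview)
Your overall plan matches the paper's: both refute the conjecture by exhibiting the $5$-cycle assignments in Figure~\ref{fig:goldbergce} and deferring the verification that they are not fixable to a computer search. Your explicit checks of superabundance and of the forced-color obstruction at $v_1$ are correct and in fact go beyond what the paper spells out.

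There is, however, a genuine error in your reduction of the case analysis. You claim that pairing a solitary color with ``anything'' merely relabels a single list slot. This is false when the other color is not solitary. For instance, take $a=0$ (solitary at $v_0$) and $b=6$. Since $v_0$ has \emph{both} $0$ and $6$, we get $S_{L,0,6}=\{v_1,v_3,v_4\}$, and a swap on any nonempty $J$ replaces $6$ by $0$ in one or more of those lists---this is not a relabeling of one slot, and it can move the obstruction away from $v_1$ (e.g.\ swapping on $\{v_4\}$ alone forces both edges at $v_4$ to use color $7$). So the ``solitary'' shortcut does not collapse the analysis the way you suggest; the mixed pairs $(\text{solitary},c)$ with $c\in\{6,7\}$ are just as live as the pairs inside $\{3,6,7\}$.

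More importantly, the invariant you propose (``two edges sharing a vertex have intersection a single common color'') is only asserted, not established. Checking that it persists under every admissible swap for every choice of $(a,b)$ and every adversary partition is exactly the exhaustive search that the paper hands to the computer, and the paper is explicit that it has no intuitive explanation for why these assignments are counterexamples. Your write-up therefore has the same status as the paper's: a correct identification of the counterexample together with the easy verifications, but no human proof of non-fixability. The error above means your sketch of how a human proof \emph{would} go is not yet a viable outline.
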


This conjecture is very strong and implies Goldberg's conjecture 
(see \cite[p.~155ff]{stiebitz2012graph}), which is one of the major open
problems in edge-coloring.
Unfortunately, Conjecture~\ref{MoonshineConjecture} is false.  We can make
counterexamples on a $5$-cycle as in Figure \ref{fig:goldbergce}.  We don't yet
have an intuitive explanation for why these are counterexamples, but in each
case the computer has found a strategy preventing the 5-cycle from being colored.

One interesting consequence of these counterexamples 
is that $C_5$ is not $f$-fixable for \emph{any}
function $f$.  (This contrasts with the case of $(f,k)$-fixable, since now
increasing $f$ need not increase $\psi(L)$.)
Let $L$ denote the lists in Figure~\ref{fig:goldbergce}(b).
Given a function $f$, begin with $L$ and add as many ``singletons'' (colors that appear in
only one list) as needed to the lists so that each list is large enough; call
these lists $L'$.  Since $L$ is superabundant, clearly so is $L'$.
Now we play an ``extra'' game against the computer using $L$ and we use the
computer's stratey in this extra game to inform our strategy for $L'$ in the
real game.  If the colors chosen to swap in the real game are both in $L$, then
we play with the computer's strategy for the extra game.  Any color $c\in
L'\setminus L$ at a vertex $v$ in the extra game is a singleton.  Since only
three colors of $L$ are non-singletons, and $|L(v)|=4$, each vertex in the
extra game will always have a singleton in its list.  Thus, we treat $c$ like
some other singleton $c'$ at $v$ in the extra game, and use the computer's strategy
from the extra game if $c'$ had been chosen instead.

We have more questions than answers about Conjectures~\ref{OneHighConjecture}
and~\ref{MoonshineConjecture}.  For instance,
what if in Conjecture~\ref{MoonshineConjecture} we only look at
superabundant list assignments arising from an edge coloring of $G-e$, for some
edge $e$?  The resulting conjecture is also stronger than Goldberg's
conjecture, and at present we have no counterexamples.
	
\subsection{Stars with one edge subdivided}
The following conjecture would generalize the ``Short Kierstead Paths'' of Kostochka
and Stiebitz (see \cite[p.~46ff]{stiebitz2012graph}).  Parts (a) and (b) are
special cases of Conjecture \ref{OneHighConjecture}.  We have a rough draft of
a proof for part (a) and we suspect parts (b) and (c) will be similar, 
but our draft is long and detailed, and we are still hoping to find a clean
proof, like that for stars.
Recall that the fan equation implies the reducibility for $k$-edge-coloring of
stars with certain specified degrees of the leaves.  In this section we show
that the truth of Conjecture \ref{StarWithOneEdgeSubdivided} would imply a
similar equation for stars with one edge subdivided.

\begin{conjecture}
\label{StarWithOneEdgeSubdivided}
Let $G$ be a star with one edge subdivided, where $r$ is the center of the
star, $t$ the vertex at distance two from $r$, and $s$ the intervening vertex.  
If $L$ is superabundant and $|L(v)| \ge d_G(v)$ for all 
$v \in V(G)$, then $G$ is $L$-fixable if at least one of the following holds:
\begin{enumerate}
\item[(a)] $|L(r)| > d_G(r)$; or
\item[(b)] $|L(s)| > d_G(s)$; or
\item[(c)] $\psi_L(G) > \size{G}$.
\end{enumerate}
\end{conjecture}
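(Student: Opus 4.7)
The plan is to adapt the minimum-counterexample argument of Theorem \ref{FixabilityOfStars}. Let $d = d_G(r)$ and let $w_1, \ldots, w_{d-1}$ denote the leaf neighbors of $r$. Take a counterexample $(G, L)$ to (a), (b), or (c) with $\size{G}$ minimum and, subject to that, $\eta_L(G)$ maximum. In each case the broad strategy is to color either $rs$ or $st$ with a carefully chosen color and then apply Theorem \ref{FixabilityOfStars} to the remaining star (and finish any leftover edge greedily).

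For part (a), the reduction is to color $rs$ with some $\sigma \in L(r) \cap L(s)$. Then $G - rs$ splits into a star of $d-1$ edges centered at $r$ plus the isolated edge $st$; applying superabundance to $H = \{rs, st\}$ gives room in $L(s) \setminus \{\sigma\}$ and $L(t)$ to finish $st$ (with a short case analysis when $|L(s) \cap L(t)| = 1$). The list-size hypotheses of Theorem \ref{FixabilityOfStars} applied to the reduced star hold automatically because $|L(r)| > d_G(r)$. The delicate point is superabundance under the reduced list $L''(r) = L(r) \setminus \{\sigma\}$: this can fail on some $H \subseteq G - \{s,t\}$ containing $r$ only when $H$ is \emph{tight} in the sense that $\psi_L(H) = \size{H}$ and $|H_{L,\sigma}|$ is even. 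We must therefore choose $\sigma$ to avoid this parity obstruction on every tight $H$; if no such $\sigma$ exists for the current $L$, we invoke Kempe swaps (as in Theorem \ref{FixabilityOfStars}) to find $\alpha, \beta$ whose swap on $S_{L,\alpha,\beta}$ strictly increases $\eta_L(G)$ while preserving superabundance and list sizes, contradicting our choice of $(G,L)$.

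Parts (b) and (c) run in parallel. For (b) the extra room is at $s$: color $st$ first with a well-chosen $\tau \in L(s) \cap L(t)$ and then apply Theorem \ref{FixabilityOfStars} to $G - t$ with $L'(s) = L(s) \setminus \{\tau\}$; the parity-based obstruction to preserving superabundance is now localized at $s$, and the extra color there is exactly what is needed to evade or repair it. For (c) the global slack $\psi_L(G) > \size{G}$ directly absorbs a single parity loss in the reduction used for (a) or (b), so we can execute either reduction with any valid $\sigma$ (or $\tau$) and still have abundance on every subgraph of the reduced star.

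The main obstacle, as the authors signal, is coordinating the parity constraints: a single choice of $\sigma$ (or $\tau$) must simultaneously satisfy the list-size requirements, leave a color available for $st$, and preserve superabundance on every tight subgraph of the reduced star. When these constraints are incompatible, the fallback Kempe swap must correct the offending parities without creating new ones elsewhere. I would tackle this by classifying the tight subgraphs of a subdivided star (there are only a handful of geometric types: full substars at $r$, the full graph, and subgraphs including the $st$-segment) and showing that the parity constraints they impose on $\sigma$ form a consistent system under the slack from hypothesis (a), (b), or (c). The length of the authors' draft suggests that bookkeeping is where the argument grows, rather than any fundamentally new idea beyond those already used for Theorem \ref{FixabilityOfStars}.
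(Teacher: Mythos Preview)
This statement is presented in the paper as a \emph{conjecture}, not a theorem; the authors explicitly say they have only a rough, long draft for part (a) and merely ``suspect'' (b) and (c) will be similar. So there is no proof in the paper to compare against, and what you have written is a proof sketch for an open problem, not a reconstruction of an existing argument. Your high-level plan (reduce to Theorem~\ref{FixabilityOfStars} by coloring one edge of the path $rst$, then repair superabundance via Kempe swaps in a minimum-counterexample set-up) is the natural one and matches what the authors describe, but you should be clear that you are outlining an attack, not giving a proof.

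Two concrete gaps deserve flagging. First, your treatment of (c) is too quick: the hypothesis $\psi_L(G) > \size{G}$ gives slack only on the whole graph, not on proper subgraphs. If $H \subsetneq G$ is tight (i.e., $\psi_L(H) = \size{H}$) and contains exactly one of $r,s$ with $|H_{L,\sigma}|$ even, then removing $\sigma$ from $L(r)$ (or $L(s)$) drops $\psi_{L'}(H)$ below $\size{H}$, and the global slack on $G$ does nothing to save you. So (c) does not let you ``execute either reduction with any valid $\sigma$''; you still have the same parity-coordination problem as in (a) and (b). Second, the move ``color $rs$ with $\sigma$ and then apply fixability to the rest'' is, on its face, a proof of \emph{subfixability} rather than fixability. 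In Theorem~\ref{FixabilityOfStars} the analogous step works because the colors used in the matching are entirely removed from the remaining pot (``$C'$ and $\pot(L')$ are disjoint''), so the residual game really is a subgame of the original. Here $\sigma$ may survive in the leaves' lists, so a swap involving $\sigma$ in the residual game can interact with the edge you have already colored; you need to say why the fixing strategy for $G - rs$ lifts to one for $G$, or else restrict your target to subfixability (which, as the paper notes just before Section~3, would suffice for the edge-coloring applications).
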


For a graph $H$ and $v \in V(H)$, let $E_H(v)$ be the set of edges incident to
$v$ in $H$. Let $Q$ be an edge-critical graph with $\chi'(Q) = \Delta(Q) + 1$
and $G \subseteq Q$.  For a $\Delta(Q)$-edge-coloring $\pi$ of $Q - E(G)$, let
$L_\pi(v) = \irange{\Delta(Q)} - \pi\parens{E_Q(v) - E(G)}$ for all $v \in
V(G)$.  Graph $G$ is a \emph{$\Psi$-subgraph} of $Q$ if there is a
$\Delta(Q)$-edge-coloring $\pi$ of $Q - E(G)$ such that each $H \subsetneq G$
is abundant. Let $E_{L}(H) = \card{\setb{\alpha}{\pot(L)}{\card{H_{L, \alpha}}
\text{ is even}}}$ and $O_{L}(H) = \card{\setb{\alpha}{\pot(L)}{\card{H_{L,
\alpha}} \text{ is odd}}}$.  Note that $\pot(L) = E_{L}(G) + O_{L}(G)$.

\begin{lem}\label{LowPsiGivesManyOddColors}
Let $Q$ be an edge-critical graph with $\chi'(Q) = \Delta(Q) + 1$. If $G
\subseteq Q$ and $\pi$ is a $\Delta(Q)$-edge-coloring of $Q - E(G)$ such that
$\size{G}\ge \psi_L(G) $, then $\card{O_{L_\pi}(G)} \ge \sum_{v \in V(G)}
\Delta(Q) - d_Q(v)$.  Furthermore, if $\size{G} > \psi_L(G)$, then
$\card{O_{L_\pi}(G)} > \sum_{v \in V(G)} \Delta(Q) - d_Q(v)$.
\end{lem}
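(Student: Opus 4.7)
The plan is to carry out a single double counting of the pairs $(v, \alpha)$ with $v \in V(G)$ and $\alpha \in L_\pi(v)$. Once this identity is in hand, both parts of the lemma fall out immediately.

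First I would compute the row sum $\sum_{v \in V(G)} \card{L_\pi(v)}$. Since $\pi$ is a proper edge-coloring of $Q - E(G)$, the colors it assigns to the $d_Q(v) - d_G(v)$ edges in $E_Q(v) \setminus E_G(v)$ are all distinct, so $\card{L_\pi(v)} = \Delta(Q) - d_Q(v) + d_G(v)$. Summing over $v$ and using $\sum_{v\in V(G)} d_G(v) = 2\size{G}$ gives
\[
\sum_{v \in V(G)} \card{L_\pi(v)} \;=\; \sum_{v \in V(G)} \bigl(\Delta(Q) - d_Q(v)\bigr) \;+\; 2\size{G}.
\]
Next I would compute the column sum. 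Each color $\alpha$ contributes $\card{G_{L_\pi, \alpha}}$ to the same total, and splitting by parity gives $\card{G_{L_\pi, \alpha}} = 2\floor{\card{G_{L_\pi, \alpha}}/2}$ plus $1$ exactly when $\card{G_{L_\pi, \alpha}}$ is odd. Summing over $\alpha \in \irange{\Delta(Q)}$ (colors absent from every $L_\pi(v)$ contribute zero on both sides of the count) yields
\[
\sum_{v \in V(G)} \card{L_\pi(v)} \;=\; 2\psi_{L_\pi}(G) \;+\; O_{L_\pi}(G).
\]

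Equating the two expressions and solving for $O_{L_\pi}(G)$ gives
\[
O_{L_\pi}(G) \;=\; \sum_{v \in V(G)} \bigl(\Delta(Q) - d_Q(v)\bigr) \;+\; 2\bigl(\size{G} - \psi_{L_\pi}(G)\bigr),
\]
from which the first inequality is immediate, and the strict hypothesis $\size{G} > \psi_{L_\pi}(G)$ produces slack of at least $2$, giving the furthermore. There is no real obstacle here: the whole argument is one clean identity, and the only thing to verify carefully is that extending the column sum from $\pot(L_\pi)$ to $\irange{\Delta(Q)}$ is harmless. I note in passing that the hypotheses that $Q$ is edge-critical and that $\chi'(Q) = \Delta(Q)+1$ do not appear to be used in the proof itself; they are presumably present only because this is how the lemma will be applied downstream.
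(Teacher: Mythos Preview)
Your proof is correct and is essentially the same double count as the paper's: both compute $\sum_{\alpha}\card{G_{L,\alpha}}=\sum_{v}\card{L_\pi(v)}$ in two ways, once as $2\psi_L(G)+O_L(G)$ and once as $2\size{G}+\sum_v(\Delta(Q)-d_Q(v))$. Your packaging as a single identity applied at the end is slightly cleaner than the paper's, which threads the inequality $\size{G}\ge\psi_L(G)$ through from the first line, but the underlying argument is identical (and your observation that the critical and $\chi'=\Delta+1$ hypotheses play no role in the proof is also correct).
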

\begin{proof}
The proof is a straightforward counting argument.  For fixed degrees and list
sizes, as $\card{O_L(G)}$ gets larger, $\psi_L(G)$ gets smaller (half as
quickly).  The details forthwith.  Let $L = L_\pi$.

Since $\size{G} \ge \psi_L(G)$, we have 

\begin{align}
\label{edge-crit1}
\size{G} \ge 
\sum_{\alpha \in \pot(L)} \floor{\frac{\card{G_{L, \alpha}}}{2}}  =
\sum_{\alpha \in \pot(L)} \frac{\card{G_{L, \alpha}}}{2} -  \sum_{\alpha \in
	O_L(H)} \frac12 
.\end{align}
	
\noindent 
Also,

\begin{align}
\sum_{\alpha \in \pot(L)} \frac{\card{G_{L, \alpha}}}{2} 
&= \sum_{v \in V(G)} \frac{\Delta(Q) - (d_Q(v)-d_G(v))}{2} \notag\\
&= \sum_{v \in V(G)} \frac{d_G(v)}{2} + \sum_{v \in V(G)} \frac{\Delta(Q) -
d_Q(v)}{2}\notag\\
&= \size{G} +  \sum_{v \in V(G)} \frac{\Delta(Q) - d_Q(v)}{2}.
\label{edge-crit2}
\end{align}
	
\noindent Now we solve for $\size{G}-
\sum_{\alpha \in \pot(L)} \frac{\card{G_{L, \alpha}}}{2}$ in 
\eqref{edge-crit1} and \eqref{edge-crit2}, set the expressions equal, and then
simplify.  The result is \eqref{edge-crit3}.
	
\begin{align}
\card{O_L(G)} \ge \sum_{v \in V(G)} \Delta(Q) - d_Q(v).
\label{edge-crit3}
\end{align}
Finally, if the inequality in \eqref{edge-crit1} is strict, then the inequality
in \eqref{edge-crit3} is also strict.
\end{proof}

Again, let $Q$ be an edge-critical graph with $\chi'(Q) = \Delta(Q) + 1$ and $G
\subseteq Q$.  If there is a $\Delta(Q)$-edge-coloring $\pi$ of $Q - E(G)$ such
that each $H \subsetneq G$ is abundant, then $G$ is a \emph{$\Psi$-subgraph} of
$Q$.  The point of this definition is that if $G$ is a $\Psi$-subgraph (and
Conjecture~\ref{StarWithOneEdgeSubdivided}(c) holds), then $\size{G}\ge
\psi(G)$, so we can apply Lemma~\ref{LowPsiGivesManyOddColors}.

\begin{conjecture}\label{AdjacencyPrecursor}
Let $Q$ be an edge-critical graph with $\chi'(Q) = \Delta(Q) + 1$.  
Let $H$ be a star with one edge subdivided; let $r$ be the center of the star,
$t$ the vertex at distance two from $r$, and $s$ the intervening vertex. 
If $H$ is a $\Psi$-subgraph of $Q$, 
then there exists $X \subseteq N(r)$ with $V(H - r - t)
\subseteq X$ such that \[\sum_{v \in X \cup \set{t}} (d_Q(v) + 1 - \Delta(Q))
\ge 0.\]  
	
\noindent Moreover, if $\set{r,s,t}$ does not induce a triangle in $Q$, then 
\[\sum_{v \in X \cup \set{t}} (d_Q(v) + 1 - \Delta(Q)) \ge 1.\]
Furthermore, if $d_Q(r)<\Delta(Q)$ or $d_Q(s)<\Delta(Q)$, then both lower
bounds improve by 1.
\end{conjecture}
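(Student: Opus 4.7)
The plan is to use Conjecture~\ref{StarWithOneEdgeSubdivided} to force $\psi_L(H)\le\size{H}$, then feed Lemma~\ref{LowPsiGivesManyOddColors} to obtain a supply of odd colors, and finally convert that count into the degree-sum bound by a Vizing-style multi-fan argument adapted to the subdivision.  Let $\pi$ be the $\Delta(Q)$-edge-coloring certifying that $H$ is a $\Psi$-subgraph and set $L \DefinedAs L_\pi$.  Since $\chi'(Q) = \Delta(Q)+1$, Lemma~\ref{FixableCompletesColoring} tells us $H$ is \emph{not} $L$-fixable.  My first step would be to establish $\psi_L(H) \le \size{H}$: otherwise $(H,L)$ would be abundant, and combined with $H$ being a $\Psi$-subgraph this makes $L$ superabundant; part~(c) of Conjecture~\ref{StarWithOneEdgeSubdivided} would then give $L$-fixability, a contradiction.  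Lemma~\ref{LowPsiGivesManyOddColors} now yields $\card{O_L(H)} \ge \sum_{v\in V(H)}(\Delta(Q)-d_Q(v))$.

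Next I would split into two sub-cases.  If $\psi_L(H) = \size{H}$, then $L$ is superabundant, so parts~(a) and~(b) of Conjecture~\ref{StarWithOneEdgeSubdivided} force $\card{L(r)} = d_H(r)$ and $\card{L(s)} = d_H(s)$, equivalently $d_Q(r) = d_Q(s) = \Delta(Q)$.  If instead $\psi_L(H) < \size{H}$, the ``furthermore'' clause of Lemma~\ref{LowPsiGivesManyOddColors} gives strict inequality $\card{O_L(H)} > \sum_v(\Delta(Q)-d_Q(v))$; this extra $+1$ feeds exactly the ``furthermore'' conclusion of the statement when $d_Q(r)<\Delta(Q)$ or $d_Q(s)<\Delta(Q)$.

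The main obstacle is the third step: turning $\card{O_L(H)} \ge \sum_v(\Delta(Q)-d_Q(v))$ into the existence of $X \subseteq N(r)$ with $V(H-r-t) \subseteq X$ satisfying $\sum_{v \in X \cup \set{t}}(d_Q(v)+1-\Delta(Q)) \ge 0$.  The approach I would take is a Vizing-style multi-fan: for each $\alpha \in O_L(H)$ choose a vertex $v_\alpha \in H_{L,\alpha}$ and pair $\alpha$ with a color $\beta \in L(r)$; performing the $(\alpha,\beta)$-Kempe swap based at $v_\alpha$ cannot yield an $L$-edge-coloring of $H$ (by non-fixability), so the swap must terminate on a vertex of $N(r) \cup \set{t}$ that has degree $\Delta(Q)$ in $Q$, else an easy fan-rotation would complete the coloring.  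Collecting these endpoints into $X$ and using a parity argument on the odd colors to avoid double-counting converts the counting inequality into the required degree-sum.  The non-triangle bonus of $+1$ comes from the observation that when $rt \notin E(Q)$ the color that would sit on $rt$ is simultaneously free at $r$ and at $t$, giving one extra coincidence color unused by the fan accounting.

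The hardest part will be running the fan step cleanly across the subdivision vertex $s$: classical Vizing fans live inside a single closed neighborhood, but here the chain of Kempe swaps must propagate through the path $r$--$s$--$t$, and a swap that resolves a conflict at $t$ can introduce a new one at $s$ that must then be traced back to a neighbor of $r$.  Tracking these nested swaps, and in particular ruling out the possibility that two odd colors implicate the same endpoint in $N(r) \cup \set{t}$ (which would spoil the count), should close the argument.  This matches the authors' remark that even their present draft for Conjecture~\ref{StarWithOneEdgeSubdivided}(a) is ``long and detailed'', so I would not expect a clean proof without considerable bookkeeping.
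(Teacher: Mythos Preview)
Your first two steps are essentially what the paper does: use non-fixability together with Conjecture~\ref{StarWithOneEdgeSubdivided}(c) to get $\psi_L \le \size{\cdot}$, then apply Lemma~\ref{LowPsiGivesManyOddColors} to bound $\card{O_L}$, and use parts (a) and (b) for the ``furthermore'' clause via the contrapositive you describe.

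Where you diverge is step~3, and here you are making life much harder than necessary.  The paper does \emph{not} run any Kempe-chain or fan argument to manufacture $X$.  Instead it begins by replacing $H$ with a \emph{maximal} $\Psi$-subgraph $G \supseteq H$ that is still a star with one edge subdivided, and simply takes $X = N_G(r)$.  Maximality now gives a direct lower bound on $\card{E_L(G)}$: for any $x \in N_Q(r)\setminus V(G)$, the color $\alpha = \pi(rx)$ must lie in $E_L(G)$, since if $\alpha \in O_L(G)$ then adjoining $x$ to $G$ would make $G$ abundant and hence produce a larger $\Psi$-subgraph of the required shape.  This yields $\card{E_L(G)} \ge d_Q(r) - d_G(r)$ (losing one when $rst$ is a triangle, because the edge $rt$ is not available).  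The proof then finishes by pure counting:
\[
\Delta(Q) \ge \card{\pot(L)} = \card{E_L(G)} + \card{O_L(G)} \ge \bigl(d_Q(r)-d_G(r)\bigr) + \sum_{v\in V(G)}\bigl(\Delta(Q)-d_Q(v)\bigr),
\]
and rearranging (using $\card{V(G-r)} = d_G(r)+1$) gives exactly the claimed inequality for $X\cup\{t\} = V(G-r)$.

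So the ``hardest part'' you identify---tracking Kempe swaps through $s$ and avoiding double-counting of fan endpoints---never arises.  The missing idea in your outline is the passage to a maximal $\Psi$-subgraph; once you have that, no recoloring is needed at all and the argument is a few lines of arithmetic.
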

\begin{proof}[Proof (assuming Conjecture~\ref{StarWithOneEdgeSubdivided}).]
Let $G$ be a maximal $\Psi$-subgraph of $Q$ containing $H$ such that $G$
is a star with one edge subdivided.  Let $\pi$ be a coloring of $Q - E(G)$
showing that $G$ is a $\Psi$-subgraph and let $L = L_\pi$.  
	
We first show that $\card{E_{L}(G)} \ge d_Q(r) - d_G(r) - 1$ if $rst$ induces a
triangle; otherwise, $\card{E_{L}(G)} \ge d_Q(r) - d_G(r)$.
Suppose $rst$ does not induce a triangle; for an arbitrary $x \in N_Q(r) - V(G)$,
let $\alpha=\pi(rx)$.  Now consider adding $x$ to $G$.  
By assumption, every $J\subsetneq G$ is abundant.  Further, if $J\subsetneq G$
is abundant, then $J+x$ is also abundant.  Thus, we only need to show that $G$
is abundant.  If $\alpha \in O_{L}(G)$, then adding $x$ to $G$ makes $G$
abundant, since now $r$ also has $\alpha$ in its list.  This gives a larger
$\Psi$-subgraph of the required form, which contradicts the maximality of
$G$.  Hence $\alpha \in E_{L}(G)$.  Therefore, $\card{E_{L}(G)} \ge d_Q(r) -
d_G(r)$ as desired.  If $rst$ induces a triangle, then we lose one off this
bound from the edge $rt$.
	
By Conjecture \ref{StarWithOneEdgeSubdivided}(c), we have $\psi_L(G) \le
\size{G}$.  Hence, by Lemma \ref{LowPsiGivesManyOddColors}, we have
$\card{O_{L}(G)} \ge \sum_{v \in V(G)} \Delta(Q) - d_Q(v)$.  If $rst$ does
not induce a triangle, then
	
\begin{align*}
\Delta(Q) &\ge \pot(L)\\
&= \card{E_{L}(G)} + \card{O_{L}(G)}\\
&\ge d_Q(r) - d_G(r) + \sum_{v \in V(G)} \Delta(Q) - d_Q(v) \numberthis
\label{strict-ineq}\\
&= \Delta(Q) - d_G(r) + \sum_{v \in V(G - r)} \Delta(Q) - d_Q(v)\\
&= \Delta(Q) + 1 +\sum_{v \in V(G - r)} \Delta(Q) - 1 - d_Q(v).
\end{align*}
	
Therefore, $\sum_{v \in V(G - r)} \Delta(Q) - 1 - d_Q(v) \le -1$.  Negating
gives the desired inequality.  If $rst$ induces a triangle, then we lose one
off the bound.  Conjecture \ref{StarWithOneEdgeSubdivided}(a,b) gives the final
statement.
\end{proof}

\section{Algorithm Overview}
Here we describe the basic outline of our algorithm to test if a given graph $G$ is $k$-fixable.  To test if $G$ is $(L,P)$-fixable for one $L$, we need to generate the two-player game tree.  
Doing this for every $L$ would be a lot of work.  With memoization, we can cut this down and get a reasonably efficient algorithm, but we can do much better by changing to a bottom-up strategy; that is, we do dynamic programming as follows. 

\begin{enumerate}
	\item Generate the set $\mathcal{L}$ of all possible lists assignments $L$ on $G$ with $\pot{(L) \subseteq \irange{k}}$.
	\item Create a set $\mathcal{W}$ of \emph{won} assignments, consisting of all $L \in \mathcal{L}$ such that $G$ is $L$-colorable.
	\item Put $\mathcal{L} \DefinedAs \mathcal{L} \setminus \mathcal{W}$.
	\item For each $L \in \mathcal{L}$, check if there are different colors $a,b \in \irange{k}$ such that for every partition
	$X_1, \ldots, X_t$ of $S_{L,a,b}$ into sets of size at most two, there exists $J
	\subseteq \irange{t}$ so that $L' \in \mathcal{W}$, where $L'$ is formed
	from $L$ by swapping $a$ and $b$ in $L(v)$ for every $v \in \bigcup_{i \in J} X_i$.  If so, add $L$ to $\mathcal{W}$.
	\item If step $(4)$ modified $\mathcal{W}$, goto step (3).
	\item $G$ is $k$-fixable if and only if $\mathcal{L} = \emptyset$.
\end{enumerate}

In step (1), we do not really want to generate \emph{all} list assignments, just list assignments up to color permutation.  To do this generation, we put an ordering on the set of list assignments and run an algorithm that outputs only the minimal representative of each color-permutation class.  All the code lives in the GitHub repository of WebGraphs at \url{https://github.com/landon/WebGraphs}.   Since a lot of this code is optimized for speed and not readability, a reference version is currently being built at \url{https://github.com/landon/Playground/tree/master/Fixability}.

\section{Conclusion}
Most work on proving sufficient conditions for $k$-edge-colorability relies on
proving that various configurations are reducible.  Although these reducibility
proofs have common themes, they often feel ad hoc and are tailored to the
specific theorem being proved.  We have introduced the notion of fixability,
which provides a unifying framework for many of these results.  It also
naturally leads to a number of conjectures which, if true, will likely increase 
greatly what we can prove about $k$-edge-coloring.  The computer has provided
significant experimental evidence for these conjectures (proving many specific
cases), but offers little guidance toward proving them completely.

To conclude, we mention two consequences if
Conjectures~\ref{StarWithOneEdgeSubdivided} and~\ref{OneHighConjecture}
are true.
Vizing~\cite{vizing68unsolved} conjectured that every $\Delta$-critical graph
has average degree
greater than $\Delta-1$.  For large $\Delta$, the best lower bound is about
$\frac23\Delta$, due to Woodall~\cite{woodall2007average}. 
His proof relies on a new class of reducible configurations, which would be
implied if both $P_5$ is fixable (a very special case of
Conjecture~\ref{OneHighConjecture}) and
Conjecture~\ref{StarWithOneEdgeSubdivided} is true.  Another old conjecture of
Vizing~\cite{vizing65chromatic} is that every $n$-vertex graph has independence
number at most $\frac12n$.  The best upper bound known is $\frac35n$,
also due to Woodall~\cite{woodall2011independence}.  The proof is relatively
short, but relies on the same reducible configurations just mentioned.  Thus, proving
Conjecture~\ref{AdjacencyPrecursor} and
Conjecture~\ref{StarWithOneEdgeSubdivided} (even just for $P_5$) would put the
best bounds for these two old problems into a much broader context.


%

\bibliographystyle{amsplain}
\bibliography{GraphColoring}

\clearpage
\section*{Appendix: Impoved lower bound on the average degree of 4-critical graphs}
Here we prove the $\Delta=4$ case of Woodall's conjecture
\cite{woodall2008average} on the average degree of a critical graph (modulo
computer proofs of reducibility).  We show all of the reducible configurations
in Figures~\ref{fig:C1}--\ref{fig:C39}.  A \emph{board} is a list assignment for
the vertices of the configuration.  A board is \emph{colorable} if the
configuration can be colored immediately from that board.  The computer shows
that a configuration is reducible by considering all possible boards and
verifying for each that some sequence of Kempe swaps leads to a colorable board.
The \emph{depth} of a board is the minimum number of Kempe swaps with which it
can always reach a colorable board (each colorable board has depth 0).
For each configuration we list the total number of boards, and also a vector,
indexed from 0, where the $i$th coordinate is the number of boards of depth $i$.

\begin{thm}
If $G$ is an edge-critical graph with maximum degree $4$, then $G$ has average
degree at least $3.6$.  This is best possible, as shown by $K_5-e$.
\end{thm}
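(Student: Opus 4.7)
The plan is to apply the discharging argument outlined in the paper body. Assume $G$ is 4-critical with $\Delta(G)=4$; assign each vertex $v$ initial charge $d(v)$, so the total initial charge is $2\|G\|$. Since (R1)--(R3) preserve total charge, proving that every vertex finishes with charge at least $3.6$ yields $2\|G\| \ge 3.6|G|$, which is equivalent to the desired average-degree bound. The graph $K_5-e$ achieves equality, so $3.6$ cannot be improved.

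The 2-vertex and 3-vertex cases fall out of VAL. Applied at a 2-vertex $v$ with neighbor $x$, VAL forces $x$ to have at least three degree-$4$ neighbors distinct from $v$, hence $d(x)=4$; so $v$ finishes with $2+2(0.8)=3.6$. Every 3-vertex has at least two 4-neighbors by VAL, so it finishes with $3+2(0.3)=3.6$ if it has exactly two, and with $3+3(0.2)=3.6$ if it has three. For a 4-vertex $v$, let $a,b,c$ count its 2-neighbors, its 3-neighbors having three 4-neighbors (type $b$), and its 3-neighbors having exactly two 4-neighbors (type $c$). VAL forces $a\le 1$ and also $a=1\Rightarrow b=c=0$, and the remaining case $a=0$ satisfies $b+c\le 2$. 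A direct computation shows the charge of $v$ after (R2) is at least $3.2$, and the only combinations leaving $v$ strictly below $3.6$ are $(a,b,c)=(1,0,0)$ with deficit $0.4$, $(0,0,2)$ with deficit $0.2$, and $(0,1,1)$ with deficit $0.1$.

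The remainder of the proof shows that in each deficient case rule (R3) supplies the missing charge. A 4-neighbor $u$ of $v$ contributes via (R3) only when $u$ itself finishes (R2) strictly above $3.6$, which restricts the number of 2- and 3-neighbors $u$ may have. I would then perform a case analysis on the degree sequence around $v$, branching on the degrees of $v$'s 4-neighbors and, when necessary, of the neighbors of those 4-neighbors. In each branch, I would argue that either the 4-neighbors of $v$ collectively carry enough excess charge to cover the deficit via (R3), or the local structure embeds one of a finite catalog of configurations that must be absent from a 4-critical graph.

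The main obstacle is the catalog itself: producing a list of configurations simultaneously small enough to enumerate and large enough to close every residual branch of the case analysis, and then verifying reducibility of each. The paper assembles $39$ such configurations; reducibility is established by the $(f,k)$-fixability framework of Section~2. For each configuration $H$, with $f$ prescribed by the pictured degree labels, one checks via the dynamic-programming algorithm of Section~5 that $H$ is $(f,4)$-fixable; the observation following Lemma~\ref{FixableCompletesColoring} then rules out $H$ as a subgraph of $G$. Once this catalog is certified, every deficient 4-vertex receives enough charge under (R3), so every vertex finishes with charge at least $3.6$, and the theorem follows. The delicate part is not any single configuration but the combinatorial bookkeeping that guarantees the catalog is exhaustive.
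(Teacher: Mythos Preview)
Your outline matches the paper's proof exactly: the same discharging rules (R1)--(R3), the same VAL-based treatment of 2- and 3-vertices, and the same plan of covering the deficit at a 4-vertex via (R3) or else locating one of the computer-certified fixable configurations. Your enumeration of deficient 4-vertices as $(a,b,c)\in\{(1,0,0),(0,0,2),(0,1,1)\}$ is correct at first pass, but the paper immediately refines the latter two: configuration (C2) (a path with degrees $3,4,3,3$) shows that a 4-vertex with no 2-neighbor is deficient after (R2) only when it lies on a triangle of degrees $3,3,4$, so in particular your case $(0,1,1)$ cannot occur at all.

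What you defer as ``combinatorial bookkeeping'' is in fact almost the entire proof. The paper organizes it into nine claims, each handling a specific local pattern around a deficient 4-vertex $v$: Claim~2 bounds how many ways a 4-vertex with all 4-neighbors splits its excess in (R3); Claims~3--4 dispose of the $3,3,4$-triangle case and the $2,4,4$-triangle case; Claims~5--9 handle the generic case of a 4-vertex with a 2-neighbor by branching on how many of its three 4-neighbors $v_1,v_2,v_3$ themselves have 3-neighbors (and of what type). Each branch terminates either in a direct charge computation or in one of the 39 configurations (C1)--(C39). Your sketch correctly identifies the strategy but does not execute this case analysis, which is where all the content lies.
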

\begin{proof}
We use discharging with initial charge $\ch(v)=d(v)$ and the following rules.
\begin{enumerate}
\item[(R1)] Each 2-vertex takes $.8$ from each 4-neighbor.
\item[(R2)] Each 3-vertex with three 4-neighbors takes $.2$ from each 4-neighbor.
Each 3-vertex with two 4-neighbors takes $.3$ from each 4-neighbor.
\item[(R3)] Each 4-vertex with charge in excess of $3.6$ after (R2) splits this
excess evenly among its 4-neighbors with charge less than $3.6$.
\end{enumerate}
	
We write $\ch^*(v)$ for the final charge of vertex $v$.
We must show that every vertex $v$ finishes with $\ch^*(v)\ge 3.6$.

By VAL, each neigbhor of a 2-vertex $v$ is a 4-neighbor.  Thus,
$\ch^*(v)=2+2(.8)=3.6$.

By VAL, each 3-vertex $v$ has at least two 4-neighbors, so $\ch^*(v)=3+3(.2)$ or
$\ch^*(v)=3+2(.3)$.  In either case, $\ch^*(v)=3.6$.
	
	Now we consider a 4-vertex $v$.  Note that (R3) will never drop the charge of a
	4-vertex below $3.6$; thus, in showing that $\ch^*(v)\ge 3.6$, we need not
	consider (R3).
	
	\begin{clm}
		Let $v$ be a 4-vertex with no 2-neighbor. 
		If $v$ is not on a triangle with degrees 3,3,4, then $v$ finishes (R2) with
		charge at least 3.6; otherwise $v$ finishes (R2) with charge 3.4.
		\label{clm1}
	\end{clm}
	If $v$ has at most one 3-neighbor, then $v$ finishes (R2) with charge at least 3.7.
	By VAL, $v$ has at most two 3-neighbors, so assume exactly two.  If each
	receives charge $.2$ from $v$, then $v$ finishes (R3) with charge 3.6, as
	desired.  Otherwise, some 3-neighbor of $v$ has its own 3-neighbor.  Now $G$ has
	a path with degrees 3,4,3,3, which is (C\ref{fig:C2}), and hence reducible.  
	Here we use that $v$ does not lie on a triangle with degrees 3,3,4.
	
	\begin{clm}
		Let $v$ be a 4-vertex.  If $v$ has only 4-neighbors, then $v$ splits its excess
		charge of .4 at most 2 ways in (R3) \emph{unless} every $3^-$-vertex within
		distance two of $v$ is a 2-vertex; in that case $v$ may split its excess at most
		3 ways in (R3).
		\label{clm2}
	\end{clm}
	From the previous claim, we see that a 4-vertex needs charge after (R2) only if
	it has a 2-neighbor or if it lies on a triangle with degrees 3,3,4.  To prove the
	claim, we consider a 4-vertex $v$ with 4-neighbors $v_1,\ldots, v_4$ such that
	at least three $v_i$ either have 2-neighbors or lie on triangles with degrees 3,3,4. 
	
	First suppose that at least two $v_i$ lie on triangles with degrees 3,3,4.
	If two of these triangles are vertex disjoint, then we have (C\ref{fig:C5});
	otherwise, we have (C\ref{fig:C6}).  So we conclude that at most one $v_i$ lies
	on a triangle with degrees 3,3,4.  Assume that we have exactly one.
	Now we have either (C\ref{fig:C7}) or (C\ref{fig:C8}).  Thus, we conclude that
	no $v_i$ lies on a triangle with degree 3,3,4.  If each $v_i$ has a
	2-neighbor, then we have (C\ref{fig:C9}), (C\ref{fig:C10}), or (C\ref{fig:C11}).
	Thus, the claim is true.
	
	\begin{clm}
		Every vertex other than a 4-vertex with a 2-neighbor finishes (R3) with at least 3.6.
		\label{clm3}
	\end{clm}
	By Claim~\ref{clm1}, we need only consider a 4-vertex $v$ on a triangle with
	degrees 3,3,4; call its 3-neighbors $u_1$ and $u_2$.
	Since $v$ finishes (R2) with charge $3.4$, we must show that in
	(R3) $v$ receives charge at least $.2$.  By Claim~\ref{clm2}, this is true if
	$v$ has any 4-neighbor with no 3-neighbors (note that its 4-neighbors cannot
	have 2-neighbors, since that yields (C\ref{fig:C12})).  Thus, we assume that each
	4-neighbor of $v$, call them $u_3$ and $u_4$, has a 3-neighbor.  If $u_3$ or
	$u_4$ has a 3-neighbor other than $u_1$ or $u_2$, then the configuration is
	(C\ref{fig:C3}), which is reducible.  Thus, we may assume that $u_3$ is adjacent
	to $u_1$ and $u_4$ is adjacent to $u_2$.  Hence, each of $u_3$ and $u_4$
	finishes (R2) with charge $.1$.  It suffices to show that
	all of this charge goes to $v$ in (R3).  Thus, we need only show that no other
	neighbor of $u_3$ or $u_4$ needs charge after (R2).  If it does, then we have
	the reducible configuration (C\ref{fig:C4}), where possibly the rightmost 3 is a
	2.  Thus, $v$ finishes with charge $4-2(.3)+2(.1)=3.6$, as desired.
	
	\begin{clm}
		Every 4-vertex on a triangle with degrees 2,4,4 finishes (R3) with at least 3.6.
		\label{clm4}
	\end{clm}
	Let $v$ be a 4-vertex on a triangle with degrees 2,4,4, and let $v_1$ and $v_2$
	be its 2-neighbor and 4-neighbor on the triangle.  Let $v_3$ and $v_4$ be its
	other neighbors.  By VAL, $d(v_3)=d(v_4)=4$.  We will show that each of $v_3$
	and $v_4$ has only 4-neighbors and that each gives at least .2 to $v$ in (R3).
	If $v_3$ or $v_4$ has a $3^-$-neighbor, then we have (C\ref{fig:C13}) or
	(C\ref{fig:C1}), which are
	reducible; thus, each of $v_3$ and $v_4$ has only 4-neighbors.  By
	Claim~\ref{clm2}, vertex $v_3$ splits its charge of .4 at most two ways (thus,
	giving $v$ at least $.2$) unless it gives charge to exactly three of its
	neighbors, each of which has a 2-neighbor.  If this is the case, then we have
	(C\ref{fig:C14}), (C\ref{fig:C15}), or (C\ref{fig:C16}), each of which is
	reducible.  Thus, $v_3$ splits its charge at most two ways, and so gives $v$
	charge at least $.2$.  By the same argument, $v_4$ gives $v$ charge at least
	$.2$.  Thus, $v$ finishes (R3) with charge at least $4-.8+2(.2)=3.6$.
	\bigskip
	
	Now all that remains to consider is a 4-vertex $v$ with a 2-neighbor and three
	4-neighbors $v_1$, $v_2$, $v_3$.  Further, we may assume that $v$ does not lie
	on a triangle with degrees 2,4,4.  Also, we may assume that each $v_i$ has no
	2-neighbor; since $v$ lies on no 2,4,4 triangle, this would yield a copy of
	(C\ref{fig:C1}), which is reducible.
	
	\begin{clm}
		If any $v_i$ has two or more 3-neighbors, then $v$ finishes with at least $3.6$.
		\label{clm5}
	\end{clm}
	
	Suppose that $v_1$ has two 3-neighbors (by VAL it can have no more).  First, we
	note that neither $v_2$ nor $v_3$ has a 3-neighbor.  If it's distinct from those
	of $v_1$, then we have (C\ref{fig:C23}); otherwise, we have (C\ref{fig:C24}).
	Thus, each of $v_2$ and $v_3$ finishes (R2) with excess charge $.4$.  So, it
	suffices to show that each of $v_2$ and $v_3$ splits its excess charge at most
	two ways.  By Claim~\ref{clm2}, this is true unless $v_2$ (say) splits its
	charge among three 4-neighbors, each of which has a 2-neighbor. If $v_2$ does
	so, then we have (C\ref{fig:C25}), (C\ref{fig:C26}), or (C\ref{fig:C27}), each
	of which is reducible.  Thus, $v$ finishes (R3) with charge at least
	$4-.8+2(.2)=3.6$.
	
	\begin{clm}
		If any $v_i$ has a 3-neighbor, which itself has a 3-neighbor, then $v$ finishes
		with at least $3.6$.
		\label{clm6}
	\end{clm}
	Assume that $v_1$ has a 3-neighbor, which itself has a 3-neighbor.
	First, we note that neither $v_2$ nor $v_3$ has a 3-neighbor.
	If so, then we have one of (C\ref{fig:C34}), (C\ref{fig:C35}), or
	(C\ref{fig:C36}).
	Thus, each of $v_2$ and $v_3$ finishes (R2) with excess charge $.4$.  So, it
	suffices to show that each of $v_2$ and $v_3$ splits its excess charge at most
	two ways.  By Claim~\ref{clm2}, this is true unless $v_2$ (say) splits its
	charge among three 4-neighbors, each of which has a 2-neighbor. 
	If $v_2$ does so, then we have 
	(C\ref{fig:C37}), (C\ref{fig:C38}), or (C\ref{fig:C39}), each
	of which is reducible.  
	Thus, $v$ finishes (R3) with charge at least $4-.8+2(.2)=3.6$.
	
	\begin{clm}
		If no $v_i$ has a 3-neighbor, then $v$ finishes with at least $3.6$.
		\label{clm7}
	\end{clm}
	Since each $v_i$ has only 4-neighbors, it finishes (R2) with excess charge $.4$.
	By Claim~\ref{clm2}, it splits this charge at most 3 ways.  Thus, $v$ finishes
	with charge at least $4-.8+3(.4/3)=3.6$.
	
	\begin{clm}
		If at least two vertices $v_i$ have 3-neighbors, then $v$ finishes with at least
		$3.6$.
	\end{clm}
	Assume that $v_1$ and $v_2$ each have 3-neighbors.  By Claim~\ref{clm5}, each
	has exactly one 3-neighbor.  By Claim~\ref{clm6}, the 3-neighbors of $v_1$ and
	$v_2$ do not themselves have 3-neighbors.  Thus, each of $v_1$ and $v_2$
	finishes (R2) with charge exactly $3.8$.  Hence, in (R3), each splits its
	excess charge of $.2$ among its 4-neighbors that need charge.  We show that in
	(R3) all of this charge goes to $v$.  Suppose, to the contrary, that $v_1$
	sends some of its charge elsewhere; this could be to (i) a $4$-vertex with a
	2-neighbor or (ii) a 4-vertex on a triangle with degrees 3,3,4.  In (i), we
	have one of (C\ref{fig:C17})--(C\ref{fig:C20}).  In (ii), we have one of
	(C\ref{fig:C21}) and (C\ref{fig:C22}).  All such configurations are reducible,
	which proves the claim.
	
	\begin{clm}
		If exactly one vertex $v_i$ has a 3-neighbor, then $v$ finishes with at least
		$3.6$.
	\end{clm}
	Assume that $v_1$ has exactly one 3-neighbor (and $v_2$ and $v_3$ have no
	3-neighbors), so $v_2$ and $v_3$ each finish (R2) with excess charge $.4$.
	It suffices to show that either $v_2$ and $v_3$ each give $v$ at least half of
	their charge in (R3) or else at least one of them gives $v$ all its charge in
	(R3); assume not.  By symmetry, we assume that $v_2$ splits its charge at least
	three ways in (R3) and $v_3$ splits its charge at least two ways.  If $v_3$
	gives charge to a 4-vertex with a 2-neighbor (in addition to $v$), then we have
	one of (C\ref{fig:C28})--(C\ref{fig:C32}).  So assume that $v_3$ gives charge to
	a 4-vertex on a triangle with degrees 3,3,4.  By Claim~\ref{clm6}, neither of
	these 3-vertices on the triangle is the 3-neighbor of $v_1$.  If we do not have
	(C\ref{fig:C29}) or (C\ref{fig:C30}), then we must have (C\ref{fig:C33}), which
	is reducible.  This proves the claim.
\end{proof}

\begin{figure}[htb]
\renewcommand{\ttdefault}{ptm}
	\centering

	\caption{(C39) 8239 total boards: In increasing depths
		(4424, 2063, 653, 628, 400, 69, 2).\label{fig:C39}
	}
\end{figure}
\clearpage

\end{document}